%2multibyte Version: 5.50.0.2960 CodePage: 65001
%\usepackage{biblatex}
%\input{tcilatex}

\documentclass[12pt]{article}
%%%%%%%%%%%%%%%%%%%%%%%%%%%%%%%%%%%%%%%%%%%%%%%%%%%%%%%%%%%%%%%%%%%%%%%%%%%%%%%%%%%%%%%%%%%%%%%%%%%%%%%%%%%%%%%%%%%%%%%%%%%%%%%%%%%%%%%%%%%%%%%%%%%%%%%%%%%%%%%%%%%%%%%%%%%%%%%%%%%%%%%%%%%%%%%%%%%%%%%%%%%%%%%%%%%%%%%%%%%%%%%%%%%%%%%%%%%%%%%%%%%%%%%%%%%%
\usepackage{amsmath}
\usepackage{amsfonts}
\usepackage{mitpress}
\usepackage{comment}
\usepackage[toc,page, title]{appendix}
\usepackage{graphicx}
\usepackage{color}
\usepackage{marginfix}

\setcounter{MaxMatrixCols}{10}
%TCIDATA{OutputFilter=LATEX.DLL}
%TCIDATA{Version=5.50.0.2960}
%TCIDATA{Codepage=65001}
%TCIDATA{<META NAME="SaveForMode" CONTENT="1">}
%TCIDATA{BibliographyScheme=Manual}
%TCIDATA{Created=Monday, May 25, 2015 17:19:32}
%TCIDATA{LastRevised=Friday, June 19, 2015 11:58:56}
%TCIDATA{<META NAME="GraphicsSave" CONTENT="32">}
%TCIDATA{<META NAME="DocumentShell" CONTENT="Articles\SW\A Simple MIT Press Article">}
%TCIDATA{Language=American English}
%TCIDATA{CSTFile=40 LaTeX article.cst}
%TCIDATA{ComputeDefs=
%$H\left( x_{1},x_{2},y,u\right) =\left( x_{1}+x_{2}\right) y-\frac{x_{1}}{%
%x_{2}}u$
%}

\DeclareMathOperator{\Tr}{Tr}

\newcommand{\R}{{\mathbf R}}
\newcommand{\Dom}{\mathop{\rm Dom}}

\newcommand{\spt}{\mathop{\rm spt}}
\newcommand{\dom}{\mathop{\rm dom}}

\newcommand{\p}{{\partial}}

\newcommand{\tr}{\mathop{\rm tr}}
\newtheorem{theorem}{Theorem}

\newtheorem{corollary}[theorem]{Corollary}

\newtheorem{example}[theorem]{Example}

\newtheorem{lemma}[theorem]{Lemma}

\newtheorem{proposition}[theorem]{Proposition}
\newtheorem{remark}[theorem]{Remark}

\newtheorem{assumption}[theorem]{Assumption}

\newenvironment{proof}[1][Proof]{\noindent\textbf{#1.} }{\ \rule{0.5em}{0.5em}}
\newdimen\dummy
\dummy=\oddsidemargin
\addtolength{\dummy}{72pt}
\marginparwidth=.5\dummy
\marginparsep=.1\dummy

\begin{document}

\title{Optimal transportation between unequal %spaces of different 
dimensions\thanks{%
The authors are grateful to Toronto's Fields' Institute for the Mathematical
Sciences for its kind hospitality during part of this work. RJM acknowledges
partial support of this research by Natural Sciences and Engineering
Research Council of Canada Grant 217006-15, 
by a Simons Foundation Fellowship, and by the US
National Science Foundation under Grant No.\ DMS-14401140 while in
residence at the Mathematical Sciences Research Institute in Berkeley CA
during January and February of 2016.  He thanks S.-Y. Alice Chang for a stimulating conversation.
BP is pleased
to acknowledge support from Natural Sciences and Engineering Research
Council of Canada Grants 412779-2012 and 04658-2018, as well as a  University of Alberta start-up
grant.  He is also grateful to the Pacific Institute for the Mathematical Sciences, in Vancouver, BC, Canada, for its generous hospitality during his visit in February and March of 2017.  \qquad %\hskip0.75in 
\copyright\ by the authors, \today}}
\author{Robert J McCann\thanks{%
Department of Mathematics, University of Toronto, Toronto, Ontario, Canada
mccann@math.toronto.edu} \ and Brendan Pass\thanks{%
Department of Mathematical and Statistical Sciences, University of Alberta,
Edmonton, Alberta, Canada pass@ualberta.ca.}}

\maketitle

\begin{abstract}
We establish that solving an optimal transportation problem in which the source
and target densities are defined on manifolds with different dimensions, 
is equivalent to solving a  new nonlocal analog of the Monge-Amp\`ere 
equation,  introduced here for the first time.  Under suitable topological conditions,
we also establish that solutions are smooth if and only if a local variant of the
same equation admits a smooth and uniformly elliptic solution. We show that this local equation is 
%{\re at least degenerate-}
elliptic, and $C^{2,\alpha}$ solutions can therefore be bootstrapped to obtain higher regularity results, assuming smoothness of the corresponding differential operator, which we prove under simplifying assumptions.
 For one-dimensional targets, our sufficient criteria for regularity of solutions to the resulting ODE
are considerably less restrictive than those required by earlier works.
\end{abstract}

\section{Introduction}
Since the 1980s \cite{CullenPurser84} \cite{KnottSmith84} \cite{RuschendorfRachev90}  
and the celebrated work of Brenier \cite{Brenier87} \cite{Brenier91},  it has been well-understood
\cite{McCann97} that for the quadratic cost $c(x,y)=\frac12 |x-y|^2$ on $\R^n$, 
solving the Monge-Kantorovich optimal transportation 
problem is equivalent to solving a degenerate elliptic Monge-Amp\`ere equation:
that is, given two probability densities $f$ and $g$ on $\R^n$,  the unique optimal map
between them, $F=Du$, is given by a convex solution $u$ to the boundary value problem
\begin{eqnarray}\label{Monge-Ampere}
g \circ Du \det D^2 u 
= f &&\ {\rm [a.e.],} 
\\ Du \in \spt g &&\ {\rm [a.e.],}
\label{2BVP}
\end{eqnarray}
%
%or more precisely finding a convex solution to the 2nd boundary value problem for said equation
%in a suitably weak sense.
where $\spt g \subset \R^n$ is the smallest closed set of full mass for $g$.  
Similarly, its inverse is given by the gradient of the convex solution $v$ to the boundary value problem
\begin{eqnarray}\label{Monge-Ampere2}
f\circ Dv \det D^2 v 
= g &&\ {\rm [a.e.],} 
\\ Dv \in \spt f &&\ {\rm [a.e.]}.
\label{2BVP2}
\end{eqnarray}
Notice the quadratic cost implicitly requires $x$ and $y$ to live in 
the same space.  Subsequent work of Ma, Trudinger and Wang \cite{MaTrudingerWang05} leads to
an analogous result for other cost functions $c(x,y)=-s(x,y)$ satisfying suitable conditions,
still requiring $x$ and $y$ to live in spaces of the same dimension $n$;  see also 
 earlier works such as
\cite{Caffarelli96} \cite{GangboMcCann96}   \cite{Levin99} \cite{McCann01} \cite{Wang04}.
The purpose
of the present article is to explore what can be said when $x \in \R^m$ and $y\in \R^n$ live in spaces 
with different dimensions $m>n$, as in e.g.~\cite{GangboMcCann00} \cite{ChiapporiMcCannPass17}  \cite{Lott17}.

Although the symmetry between $x$ and $y$ is destroyed,
the duality theorem from linear programming, 
 \cite{Kuhn55} \cite{Santambrogio15} \cite{Brezis18},
strongly suggests that the problem
can still be reduced to finding a single scalar potential $u(x)$ or $v(y)$ reflecting the relative
scarcity of supply $f$ at $x$ (or demand $g$ at $y$).  Although this potential solves a
minimization problem, it is not clear what equation, if any,
%is there an equation which 
selects it.  Nor whether one expects its solution to be smoother
than Lipschitz and semiconvex \cite{GangboMcCann96}  \cite{FigalliGigli11}.  
These are among the questions addressed hereafter.  Our primary results
are as follows:  We exhibit an integro-differential equation which selects $v(y)$. In contradistinction
to the case investigated by %Monge-Amp\`ere type equations of 
Ma, Trudinger and Wang,  our equation, though still fully nonlinear,  is in general nonlocal.  
 However, we also show this equation has two local analogs, one of which is at least degenerate-elliptic.
These may or may not admit solutions:  however under mild topological conditions,  it turns out they admit a
$C^2$ smooth, strongly elliptic solution if and only if the dual  linear program admits $C^2$ minimizers. 
%$(u,v)$,  in which case $v$ and $\tilde v$ (up to known ambiguities).  
%Thus we are able to give conditions guaranteeing smoothness of $v$ and of $u$, 
These locality criteria  build upon our results with  Chiappori \cite{ChiapporiMcCannPass17} 
from $n=1$,  and extend the notion of nestedness introduced there to targets of arbitrary dimension.
We also relax and refine the notion of nestedness, leading to regularity results for a large new class of examples even when $n=1$.

Our basic set-up is as follows.  Fix $m\ge n\ge 1$ and sets 
 $X \subset \R^m$ and  $Y \subset \R^n$ equipped with Borel probability
densities $f$ and $g$.  We say $F:X \longrightarrow Y$ pushes $f$ forward to 
$g=F_\#f$ if $F$ is Borel and
\begin{equation} \label{eqn: push-forward for densities}
\int_{Y} \psi(y) g(y) dy  = \int_X \psi(F(x)) f(x) dx,
\end{equation}
for all bounded Borel test functions $\psi \in L^\infty({Y})$.  
%Here $\bar Y$ denotes the closure of $Y$.
%$\nu=F_\#f$ if
%\begin{equation}\label{push-forward}
%\int_{\bar Y} \psi(y) d\nu(y)  = \int_X \psi(F(x)) f(x) dx,
%\end{equation}
%for all test functions $\psi \in C({\bar Y})$.  Here $\bar Y$ denotes the closure of $Y$.
%If $d\nu(y)=g(y)dy$ we abuse notation by writing $F_\#f=g$ and saying $F$ pushes $f$ forward to $g$.
If, in addition,  $F$ happens to be Lipschitz
and its (n-dimensional) Jacobian $JF(x):=\det^{1/2} [DF(x) DF^T(x)]$ vanishes at most
on a set of $f$ measure zero,  then the co-area formula yields
\begin{equation}\label{co-area}
g(y) = \int_{F^{-1}(y)} \frac{f(x)}{JF(x)} d{\mathcal H}^{m-n}(x)
\end{equation}
for a.e. $y \in Y$, where ${\mathcal H}^k$ denotes $k$-dimensional Hausdorff measure.

%We henceforth assume $X$ and $Y$ to be bounded and open.
Given a surplus function $s \in C^2(X \times \bar Y)$,  Monge's problem is to compute
\begin{equation}\label{Monge}
\bar s(f,g) := \sup_{F_\#f=g} \int_X s(x,F(x)) f(x) dx,
\end{equation}
where the supremum is taken over maps $F$ pushing $f$ forward to $g$.
The supremum is well-known to be uniquely attained provided $X \times Y$ is open and $s$ is {\em twisted}
\cite{Villani09},
meaning  $D_x s(x,\cdot)$ acts injectively on $\bar Y$ for each $x \in X$;
here $\bar Y$ denotes the closure of $Y$.
It can be characterized through the Kantorovich dual problem %\cite{Villani09}
\begin{equation}\label{Kantorovich dual}
\bar s(f,g) = \min_{u(x) + v(y) \ge s(x,y)} \int_{X} u(x) f(x) dx + \int_Y v(y) g(y) dy, 
\end{equation}
where the minimum is taken over pairs $(u,v) \in L^1(f) \oplus L^1(g)$ satisfying
$u \oplus v \ge s$
 throughout $X \times  Y$.  Dual minimizers of the form $(u,v) = (v^s,u^{\tilde s})$
are known to exist \cite{Villani09}, where
\begin{equation}\label{s-dual}
v^s(x) = \sup_{y \in \bar Y} s(x,y) - v(y) \qquad u^{\tilde s}(y) = \sup_{x \in X} s(x,y) - u(x).
\end{equation}
Such pairs of payoff functions are called $s$-conjugate,  and $u$ and $v$ are said to be $s$- and
$\tilde s$-convex, respectively. %  $\bar Y$ denotes the closure of $Y$.

%\marginpar{Define twisted}
To motivate our first result,   let $X \subset \R^m$  be open and $Y \subset \R^n$ be open and bounded,
 and $s\in C^2(X \times \bar Y)$ twisted and {\em non-degenerate},
meaning  in addition to the injectivity of $y \in \bar Y \mapsto D_xs(x,y)$ mentioned above that
$D^2_{xy} s(x,y)$ has maximal rank throughout $X \times \bar Y$.
%here $\bar Y$ denotes the closure of $Y$.
Suppose $F$ maximizes the primal problem \eqref{Monge} and 
$(u,v)=(v^s,u^{\tilde s})$ are $s$-convex payoffs minimizing the 
dual problem \eqref{Kantorovich dual}.  Then $u(x) + v(y) - s(x,y) \ge 0$
on $X \times \bar Y$,  with equality on graph$(F)$.  
%Setting
%$$
%\p_{\tilde s} v := \{(y,x) \in \bar Y \times \bar X \mid u(x) + v(y) - s(x,y) =0\},
%$$
%we see
Thus
\begin{eqnarray}\label{s-subdifferential}
F^{-1}(y) 
&\subset& \p_{\tilde s} v(y) 
\\ &:=& \{ x \in X \mid  %(y,x) \in \p_{\tilde s} v%
s(x,y) - v(y) =\sup_{y' \in \bar Y} s(x,y') - v(y') 
\}.
\end{eqnarray}
%\marginpar{$\bar X$?}
%Assuming sufficient differentiability,
Since $s \in C^2(X \times \bar Y)$, $u$ and $v$ admit second-order Taylor
expansions Lebesgue a.e. as in e.g. \cite{GangboMcCann96} \cite{Villani09}, and 
the first- and second-order conditions for equality on graph$(F)$ imply
\begin{eqnarray}\label{FOCv}
Dv(F(x))  &=& D_y s(x,F(x)) \qquad \mbox{\rm [$f$-a.e.] and}
\\ D^2 v(F(x)) &\ge& D^2_{yy} s(x,F(x)) \qquad \mbox{\rm [$f$-a.e.]}.
\label{SOCv}
\end{eqnarray}
Differentiating the first-order condition yields
\begin{equation}\label{JACv}
[D^2 v(F(x))-D^2_{yy} s(x,F(x))] DF(x)  = D^2_{xy} s(x,F(x))  \qquad \mbox{\rm [$f$-a.e.]}
\end{equation}
as in e.g. \cite{MaTrudingerWang05}.
Since $D^2_{xy} s$ has full-rank, when $F$ happens to be Lipschitz
we identify %the square of 
its Jacobian $f$-a.e.\ as
\begin{equation}\label{Jacobian}
JF(x) = \frac{\sqrt{\det [D^2_{xy} s(x,F(x)) (D^2_{xy} s(x,F(x)))^T]}}{\det [D^2 v(F(x))-D^2_{yy} s(x,F(x))]}.
\end{equation}
In this case we can rewrite \eqref{co-area} in the form
\begin{equation}\label{near PDE}
g(y) = \int_{F^{-1}(y)} \frac{\det [D^2 v(y)-D^2_{yy} s(x,y)]}{\sqrt{\det D^2_{xy} s(x,y) (D^2_{xy} s(x,y))^T} } 
f(x) d{\mathcal H}^{m-n}(x).
\end{equation}

Except for the appearance of the map $F$ in the domain of integration,  this would be a partial differential equation relating $v$ 
to the data $(s,f,g)$.  However, using  twistedness of the  surplus 
%an argument quite classical in spirit shows that 
%it turns out 
we'll show that for a.e.~$y$,
the containment \eqref{s-subdifferential} is saturated up to an ${\mathcal H}^{m-n}$ negligible set.
%
%{\em [More precisely, $y \in \Dom Dv \cap F(\Dom Du)$ implies
%$$
%\p_{\tilde s} v(y) \cap \Dom Du \subset F^{-1}(y).]
%$$}
%For such $y$ 
%
Thus we arrive at 
\begin{equation}\label{nonlocal PDE}
g(y) = \int_{\p_{\tilde s}v(y)} \frac{\det [D^2 v(y)-D^2_{yy} s(x,y)]}{\sqrt{\det D^2_{xy} s(x,y) (D^2_{xy} s(x,y))^T} } 
f(x) d{\mathcal H}^{m-n}(x) \quad \mbox{[\rm ${\mathcal H}^n$-a.e.].}
\end{equation}

This is an analog of the Monge-Amp\`ere equation \eqref{Monge-Ampere},  
familiar from the case $s(x,y) = -\frac12 |x-y|^2$, or equivalently $s(x,y) = x \cdot y$.  
Notice the boundary condition \eqref{2BVP}
for that case is automatically subsumed in formulation \eqref{nonlocal PDE}.
%As for $m=n$, %\eqref{SOCv} shows 
%this equation turns to be {\em  degenerate elliptic} (DISCUSS; DOES ELLIPTICITY ONLY HOLD FOR LOCAL %VARIANT?) 
%for the solutions $v$ of interest \eqref{SOCv},
However, unlike the case $m=n$, it is badly nonlocal since the domain of integration $\p_{\tilde s} v(y)$ 
defined in \eqref{s-subdifferential}
may potentially depend on $v(y')$ for all $y' \in Y$.

For twisted non-degenerate $s$ and an $s$-convex $v$, our first result states that $v$ 
satisfies \eqref{nonlocal PDE} if and only if $v$ combines with its conjugate $u=v^s$ 
to minimize \eqref{Kantorovich dual}; see Corollary \ref{C:nonlocal PDE} of \S\ref{S:nonlocalPDE}.
Since the optimal map $F$ can be recovered from the first-order condition
\begin{equation}\label{FOCu}
D_x s(x,F(x))= Du(x),
\end{equation}
analogous to \eqref{FOCv},
this shows Monge's problem has been reduced to the solution of the partial differential equation
\eqref{nonlocal PDE} for the $\tilde s$-convex scalar function $v$.
Note that although we neither assume nor establish Lipschitz continuity of $F$ in the
%we effectively establish that 
sequel, for $s \in C^2$ twisted the $s$-convexity of $u$ makes $F$ countably Lipschitz,
as in e.g.~\cite{Santambrogio15}.

Although non-locality makes this equation a challenge to solve,
it turns out there is a class of problems for which \eqref{nonlocal PDE} can be replaced
by a local partial differential equation, as follows.  Introduce the $m-n$ dimensional submanifold
\begin{eqnarray*}
X_1(y,p,Q) &:=& X_1(y,p)  := \{x\in X \mid D_y s(x,y) = p\}%\ \mbox{\rm and}
\end{eqnarray*}
 of $X$ and its closed subset
\begin{eqnarray}
 X_2(y,p,Q)  &:=& \{x\in X_1(y,p) \mid D^2_{yy} s(x,y) \le Q\}.
\label{potential indifference sets}
\end{eqnarray}
Now \eqref{FOCv}--\eqref{SOCv} imply
\begin{equation}\label{inclusion}
\p_{\tilde s} v(y) \subset X_2(y,Dv(y),D^2 v(y)) \subset X_1(y,Dv(y))
\end{equation}
for all $y \in \dom D^2 v$,  the subset of $\bar Y$ where $v$ admits a second-order Taylor expansion. 
It is often the case that one or both of these containments becomes an equality,
at least up to ${\mathcal H}^{m-n}$ negligible sets.
In this case locality is restored: 
we can then write \eqref{nonlocal PDE} in the form
\begin{equation}\label{local PDE}
 G(y, Dv(y),D^2v(y)) = g(y) \quad {\rm [a.e.]}, %\quad {\rm where}
\end{equation}
where
\begin{eqnarray}\label{G_i}
G(y,p,Q) &:=& G_i(y,p,Q)
\\ &:=&\int_{X_i(y,p,Q)} \frac{\det [Q-D^2_{yy} s(x,y)]}{\sqrt{\det D^2_{xy} s(x,y) (D^2_{xy} s(x,y))^T} } 
f(x) d{\mathcal H}^{m-n}(x)
\nonumber
\end{eqnarray}
and either $i=1$ or $i =2$.

Our second result states any  classical $s$-convex solution $v \in C^2(Y)$ to either local problem \eqref{local PDE} 
also solves the nonlocal one \eqref{nonlocal PDE};  Corollary~\ref{C:local PDE}.  
Assuming connectedness of $X_1(y,Dv(y))$, 
we show such a solution exists and satisfies the uniform
ellipticity criterion $D^2 v - D^2_{yy} s >0$ if and only if the dual minimization \eqref{Kantorovich dual} admits
a $C^2$ solution ; Theorem \ref{thm: smoothness implies nestedness} of \S\ref{S:localPDE}. %,  namely $(v^s,v)$.
For an $n=1$ dimensional target,
necessary and sufficient conditions for the more restrictive variant $i=1$ to
admit an $\tilde s$-convex solution have been given
in joint work with Chiappori \cite{ChiapporiMcCannPass17}.
There the ordinary differential equation \eqref{local PDE} is also analyzed 
to show $v$ inherits smoothness from suitable conditions on the data $(s,f,g)$
 in this so-called {\em nested} case.
The existence of a solution to \eqref{local PDE} with $i=1$ extends the notion of nestedness
from $n=1$ to higher dimensions.

We go on to show that the operator $G_2$ is degenerate elliptic in  \S \ref{S:ellipticity}, and 
that the ellipticity is strict at points where $G_2 >0$.  
As a consequence,  
we are able to deduce higher regularity of solutions $v$ of \eqref{local PDE} with $i=2$ from $C^{2,\alpha}$ regularity
 in Theorem \ref{thm: bootstrapping}, provided $G_2$ is sufficiently smooth.
 In Theorem \ref{thm: smoothness of G} of \S \ref{S:G1=G2}
we establish this smoothness for the simpler operator $G_1$, allowing for the passage from $C^{2,\alpha}$ to higher regularity when $G_2=G_1$.   For one-dimensional targets, we establish the smoothness of $G_2$ in
Theorem \ref{thm: smoothness of G_2} of \S \ref{S:G2}, whether or not it coincides with $G_1$.
The hypothesized second order smoothness and uniform  ellipticity of $v$ remain intriguing open questions --- with partial resolutions known only in the cases $n=m$ of Ma, Trudinger and Wang \cite{MaTrudingerWang05} \cite{TrudingerWang09b} 
(which built on earlier work of Caffarelli \cite{Caffarelli92} \cite{Caffarelli96b}, Delanoe \cite{Delanoe91} and Urbas \cite{Urbas97}),  and for $n=1$ in the nested case~\cite{ChiapporiMcCannPass17};  to these we now add the non-nested cases which satisfy the local equations
\eqref{local PDE}--\eqref{G_i} with $(n,i)=(1,2)$, resolved in \S \ref{S:ODE} below.  When regularity fails for $m=n$ the size of the 
singular set has been estimated by DePhilippis and Figalli \cite{DePhilippisFigalli15}, building on work of 
Figalli \cite{Figalli10} with Kim \cite{FigalliKim10}; for related results see Kitagawa and Kim \cite{KimKitagawa16}
and the survey \cite{DePhilippisFigalli14}.

%{\bf EXAMPLES; CONJUGATE V NON-CONJUGATE POINTS

%ALSO DISCUSS NON-LOCALITY of $s$-convexity and LOCALITY CONJECTURES

%One can also imagine introducing other notions of weak solution to the nonlocal equation,
%analogous to Alexandrov solutions and viscosity solutions to the Monge-Amp\`ere type equation,
%which might be provably equivalent to optimal, though this may not be the place to pursue this.
%}

\section{A nonlocal partial differential equation for optimal transport}
\label{S:nonlocalPDE}

Given $X \subset \R^m$ and $Y \subset \R^n$, a Borel probability density
$f$ on $X$ and a Borel map $F:X \longrightarrow Y$,  we 
define the pushed-forward measure $\nu := F_\#f$ by 
\begin{equation}\label{push-forward}
\int_{Y} \psi(y) d\nu(y)  = \int_X \psi(F(x)) f(x) dx
\end{equation}
for all bounded Borel functions $\psi \in L^\infty(Y)$.  This definition
extends \eqref{eqn: push-forward for densities} to the case where $\nu$ need not be absolutely continuous
with respect to Lebesgue; however when $\nu$ is absolutely continous with Lebesgue density
$g$, we abuse notation by writing $g=F_\#f$.

%The following definition generalizes the rank$(D^2_{xy} s) = \min\{m,n\}$ 
%condition to surpluses $s \not\in C^2$:

%\begin{definition}[Non-degeneracy] 
%Given $X \subset \R^m$, $Y\subset \R^n$ with $m \ge n$, we say $s \in C^{1,1}(X \times Y)$
%degenerates at $(x_0,y_0)$ if for every Lipschitz extension of $F(x):=D_y s(x,y_0)$ to a neighbourhood of 
%$x_0$ in $\R^m$ and choice of orthonormas basis for this neighbourhood yields some $m \times n$
%matrix $M \in \p F(x_0)$ with determinant zero.  Otherwise $s$ is said to be non-denerate at $(x_0,y_0)$.
%Here $\p F$ denotes the {\em Clarke subdifferential} at $x_0$:
 %the closed convex hull of limits of gradients of $F$ at nearby points of differentiability in $\R^m$.
%If $s$ is non-degenerate at each $(x_0,y_0) \in X \times Y$,  it is said to be non-degenerate on $X \times Y$.
%\end{definition}

Recall $s\in C^2(X \times \bar Y)$ is {\em twisted} if for each $x \in X$ the map
$y \in \bar Y \mapsto D_x s(x,y)$ is one-to-one.  If
$$
D_x s(x,y) = p
$$
we can then deduce $y$ uniquely from $x$ and $p$, in which case we write 
$y= s$-$\exp_x p := D_x s(x,\cdot)^{-1}(p)$.
The non-degeneracy of $s$ (full-rank of $D^2_{xy}s$) 
guarantees $s$-$\exp$ is a continuously differentiable function of $(x,p)$ where 
defined, by the implicit function theorem.  Thus for a twisted cost function,  the first-order condition
%\begin{equation}\label{FOCu}
%D_x s(x,F(x))=Du(x)
%\end{equation}
%analogous to 
\eqref{FOCu} allows us to identify the map $F=s$-$\exp \circ Du$
at points of $X$ where $u$ happens to be differentiable.
We denote the set of such points by $\dom Du$.  Similarly we denote the set of 
points where $F:X \longrightarrow \bar Y$ is approximately differentiable by 
$\dom DF$,  and the set where $u$ admits a second order Taylor expansion by $\dom D^2 u$.  
When $s$ is non-degenerate and twisted, \eqref{FOCu} implies $\dom DF = \dom D^2 u$.  
A function $u:X \subset \R^m$ is said to be {\em semiconvex} if there exists $k \in \R$ such that
$u(x) + k|x|^2$ is the restriction to $X$ of a convex function on~$\R^m$.
 
\begin{theorem}[Properties of potential maps]\label{T:nonlocal PDE} 
Fix $m\ge n$, open sets $X \subset \R^m$ and $Y \subset \R^n$ with $Y$ bounded, and
$s \in C^2(X \times \bar Y)$ (so $\|s\|_{C^2(X \times \bar Y)}<\infty$)
 twisted and non-degenerate.  Any pair 
$(u,v)=(v^s,u^{\tilde s})$ of $s$-conjugate functions \eqref{s-dual} %on $X \times Y$
are semiconvex, Lipschitz, and have second-order Taylor expansions Lebesgue a.e.
%Let $\dom F \subset X$ denote the $\sigma$-compact set of 
%$x \in \dom Du$ such that the supremum % \eqref{s-dual}
%defining $v^s(x)$ is attained.
The map $F:\dom Du \longrightarrow \bar Y$ satisfying \eqref{FOCu} is unique %,% countably Lipschitz,
%and extends by $\bar F = (D_x s)^{-1} \circ Du$ 
%to a neighbourhood of $\dom F$ on which it is 
and differentiable Lebesgue a.e. 
Decompose $\bar Y$ into $Y_+:= \dom D^2 v \subset \bar Y$
and $Y_- = \bar Y \setminus Y_+$ and set $X_\pm := F^{-1}(Y_\pm)$.
The Jacobian $JF(x):=\det^{1/2}[DF(x)DF(x)^T]$  %of the extension
is positive on $X_+ \cap \dom DF$ and given there by % \eqref{Jacobian}.
\begin{equation}\label{Jacobian2}
JF(x)%
%%&:=&\sqrt{\det[DF(x)DF(x)^T]} \\ &&
= \frac {\sqrt{\det [D^2_{xy} s (x,F(x)) D^2_{xy}s(x,F(x))^T]}}{\det[D^2 v(F(x)) - D^2_{yy}s(x, F(x))]}.
\end{equation}
%(but vanishes on $X_- \cap \dom DF$?)  
%(Notice $X_\pm \subset \p^s v(Y_\pm)$ with the difference satisfying
%$\p^s v(Y_\pm)\setminus X_\pm \subset \bar X \setminus \dom Du$.)
%In particular $D^2 v(F(x)) - D^2_{yy} (x,F(x))$ is positive definite for $x \in X_+ \cap \dom D^2 u$.

Any Borel probability density on $X$ can be decomposed as $f=f_+ + f_-$ where $f_\pm =f 1_{X_\pm}$
are mutually singular.  Their images $F_\# (f_\pm)$ are measures living on the 
disjoint sets $Y_\pm$.  
%Here $F_\# (f_-)$ is mutually singular with respect to Lebesgue, whereas 
Here $F_\# (f_+)$ is absolutely continuous with 
respect to Lebesgue: its density given for Lebesgue a.e.\ $y \in \bar Y$ by
\begin{eqnarray}%{equation}
\label{change of variables+}
g_+(y) 
&=& \int_{F^{-1}(y)} \frac{f_+(x)}{JF(x)} d{\mathcal H}^{m-n}(x)
\\&=&  \int_{\p_{\tilde s} v(y)} 
 \frac{\det [D^2 v(y)-D^2_{yy} s(x,y)]}{\sqrt{\det D^2_{xy} s(x,y) (D^2_{xy} s(x,y))^T} } f(x) d{\mathcal H}^{m-n}(x).
\label{change of variables2}
\end{eqnarray}%{equation}
%where we assume ${\mathcal H}^m[\p X]=0$ to obtain \eqref{change of variables2}.
\end{theorem}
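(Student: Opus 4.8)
The plan is to establish the assertions in the order listed, treating the change of variables \eqref{change of variables2} as the substantive step and the rest as essentially soft. Since $\|s\|_{C^2(X\times\bar Y)}<\infty$ and $\bar Y$ is compact, each function $x\mapsto s(x,y)-v(y)$ is uniformly Lipschitz and uniformly semiconvex on $X$; taking the supremum over $y\in\bar Y$ shows $u=v^s$ is Lipschitz and semiconvex, and symmetrically for $v=u^{\tilde s}$. Alexandrov's theorem then supplies second-order Taylor expansions Lebesgue-a.e., so $\dom D^2v$ is of full measure in $\bar Y$ and $Y_-$ is Lebesgue null. Twistedness together with \eqref{FOCu} forces $F(x)=s$-$\exp_x Du(x)$ wherever $Du(x)$ exists, giving uniqueness of $F$ on $\dom Du$; non-degeneracy makes $s$-$\exp$ continuously differentiable through the implicit function theorem, so $F$ is differentiable on $\dom D^2u=\dom DF$, a set of full measure. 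The splitting $f=f_++f_-$ with $f_\pm=f1_{X_\pm}$ is mutually singular because $X_\pm=F^{-1}(Y_\pm)$ are disjoint, and then $F_\#(f_\pm)$ are supported in the disjoint sets $F(X_\pm)\subset Y_\pm$.

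Next I would derive the Jacobian identity. For $x\in X_+$ the point $F(x)$ lies in $\dom D^2v$, and since $u\oplus v\ge s$ with equality at $(x,F(x))$ the function $y\mapsto u(x)+v(y)-s(x,y)$ has an interior minimum at $y=F(x)$; its first- and second-order conditions are \eqref{FOCv} and \eqref{SOCv}. Differentiating $Dv(F(x))=D_ys(x,F(x))$ in $x$ --- legitimate because $F$ is approximately differentiable at $x\in\dom DF$ while $Dv$ is differentiable at $F(x)$ --- yields \eqref{JACv}, i.e.\ $M(x)\,DF(x)=A(x)$ with $M:=D^2v-D^2_{yy}s$ and $A:=D^2_{xy}s$. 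Non-degeneracy makes $A$ of rank $n$, so $AA^T>0$ and $A^T$ is injective; combined with $M\ge0$ from \eqref{SOCv} and $M\,DF=A$ this forces $\ker M=\{0\}$, hence $M>0$, $DF=M^{-1}A$ is uniquely determined, and $DF\,DF^T=M^{-1}AA^TM^{-1}$. Taking determinants gives $JF=\det^{1/2}(DF\,DF^T)=\sqrt{\det(AA^T)}/\det M>0$ on $X_+\cap\dom DF$, which is \eqref{Jacobian2}; and $X_+\setminus\dom DF$ is Lebesgue null.

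Then I would obtain absolute continuity together with the first equality of \eqref{change of variables2}. Because $s\in C^2$ is twisted, $s$-convexity of $u$ makes $F$ countably Lipschitz, so $\dom Du=\bigcup_kE_k$ with each $F|_{E_k}$ Lipschitz; on each $E_k$ the co-area formula applies, the weight $f/JF$ is admissible since $JF>0$ on $X_+\cap\dom DF$, and the co-area inequality applied to the Lebesgue-null set $X\setminus\dom DF$ shows its fibre over $y$ is $\mathcal H^{m-n}$-null for a.e.\ $y$ and may be discarded. Summing over $k$ shows $F_\#(f_+)$ charges no Lebesgue-null set, hence is absolutely continuous, with density $g_+(y)=\int_{F^{-1}(y)}f_+(x)/JF(x)\,d\mathcal H^{m-n}(x)$ for a.e.\ $y$; inserting \eqref{Jacobian2} with $F(x)=y$ on the fibre rewrites $1/JF(x)$ as $\det[D^2v(y)-D^2_{yy}s(x,y)]/\sqrt{\det D^2_{xy}s\,(D^2_{xy}s)^T}$, still integrated over $F^{-1}(y)$.

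Finally --- and this is the main obstacle --- I would prove the saturation identity $\mathcal H^{m-n}\big(\p_{\tilde s}v(y)\setminus F^{-1}(y)\big)=0$ for Lebesgue-a.e.\ $y$; granting it, and using $F^{-1}(y)\subset\p_{\tilde s}v(y)$ from \eqref{s-subdifferential}, the formula of the previous step becomes \eqref{change of variables2}. From $u=v^s$ one has $u(x')\ge s(x',y)-v(y)$ for all $x'$, with equality exactly when $x\in\p_{\tilde s}v(y)$, so if $u$ is differentiable at such an $x$ the first-order condition forces $Du(x)=D_xs(x,y)$, whence $x\in F^{-1}(y)$. Thus $\p_{\tilde s}v(y)\setminus F^{-1}(y)\subset N_0:=X\setminus\dom Du$, a Lebesgue-null set, and by \eqref{inclusion} it lies, for $y\in\dom D^2v$, inside the smooth $(m-n)$-submanifold $X_1(y,Dv(y))=\{x:D_ys(x,y)=Dv(y)\}$; so it suffices to show $X_1(y,Dv(y))\cap N_0$ is $\mathcal H^{m-n}$-negligible for a.e.\ $y$. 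Here I would exploit twistedness: the map $(x,y)\mapsto(D_xs(x,y),D_ys(x,y))$ slices the contact set $\Sigma=\{u\oplus v=s\}$ injectively in $x$ and carries it into the graph of the subdifferential $\p u$, which is $m$-rectifiable with locally finite $\mathcal H^m$-measure; restricting to $N_0\times\bar Y$, whose $x$-projection $N_0$ is $(m-1)$-rectifiable (being the non-differentiability set of a semiconvex function), and applying the co-area formula to the projection $(x,y)\mapsto y$ should force the slice through $N_0$ below dimension $m-n$ for a.e.\ $y$. Carrying out this rectifiability-and-co-area bookkeeping, with careful attention to how twistedness, the submanifold $X_1(y,Dv(y))$ and the rectifiable structure of $N_0$ interact, is the technical heart of the argument; once it is in place the theorem follows by combining it with the previous steps.
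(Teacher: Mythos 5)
Your handling of the soft parts runs parallel to the paper's proof and is essentially correct: the uniform $C^2$ bounds on $s$ give Lipschitz/semiconvex potentials and Alexandrov expansions; twist plus \eqref{FOCu} identifies $F=s$-$\exp\circ Du$ uniquely; the rank argument on $[D^2v-D^2_{yy}s]DF=D^2_{xy}s$ together with \eqref{SOCv} yields \eqref{Jacobian2}; and the countably Lipschitz decomposition plus the co-area formula gives \eqref{change of variables+} (modulo the routine refinement of further decomposing into pieces where $JF>1/i$, as the paper does, so that $f/JF$ is integrable on each piece).

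The genuine gap is the final step. The saturation statement you set out to prove, $\mathcal H^{m-n}\bigl(\p_{\tilde s}v(y)\setminus F^{-1}(y)\bigr)=0$ for Lebesgue-a.e.\ $y$, is not only left as an unexecuted sketch; it is false at this level of generality. Take $m=2$, $n=1$, $s(x,y)=(x_1+x_2)y$ (twisted and non-degenerate) and $u(x)=\phi(x_1+x_2)$ with $\phi$ convex having a corner at $c$ and $\p\phi(c)=[a,b]$; such a pair $(u,v)=(v^s,u^{\tilde s})$ arises, for instance, whenever the target density vanishes on $(a,b)$. For every $y\in(a,b)$ the entire ridge $\{x_1+x_2=c\}\cap X$ lies in $\p_{\tilde s}v(y)$ and carries positive $\mathcal H^{1}$-measure, while $F^{-1}(y)$ is empty; so the difference set has positive $\mathcal H^{m-n}$-measure for a positive-measure set of $y$ (the identity \eqref{change of variables2} survives there only because the integrand vanishes, not because the domains agree). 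Your proposed rectifiability-and-co-area mechanism cannot repair this: co-area controls slices by the level sets of a fixed Lipschitz map, whereas the sets $X_1(y,Dv(y))$ move with only $n$ parameters and can sit inside the $(m-1)$-rectifiable non-differentiability set $N_0$ for a whole interval of $y$, exactly as in the example. The missing idea is the paper's much softer one: any $x\in\p_{\tilde s}v(y)\cap\dom Du$ satisfies $Du(x)=D_xs(x,y)$ and hence $F(x)=y$, so $\p_{\tilde s}v(y)\setminus F^{-1}(y)\subset \bar X\setminus\dom Du$, a Lebesgue-null set; since the integrand in \eqref{change of variables2} carries the factor $f(x)$ and $f$ is only a density, one may choose its representative to vanish off $\dom Du$ (just as the paper assumes $f=0$ on $X_\infty\cup\p X$), after which the two integrals coincide with no geometric control of the difference set required. (The introduction's informal claim of saturation up to an $\mathcal H^{m-n}$-negligible set is realized in the proof only in this weaker, $f$-weighted sense.)
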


\begin{proof}
It is well-known that $u=v^s$ and $v=u^{\tilde s}$ are Lipschitz and semiconvex \cite[Lemma 3.1]{McCannGuillen13}: 
they inherit distributional bounds such as $|Du| \le \sup_Y |D_x s|$ and 
$D^2 u \ge \inf_Y D^2_{xx} s$ %and  $D^2 v \ge \inf_X D^2_{yy} s$ 
from $s\in C^{2}$ .
This implies they extend continuously to $\bar X$ and $\bar Y$, where they are twice differentiable 
a.e.\ by Alexandrov's theorem \cite[Theorem 14.25]{Villani09};
indeed, for $x_0 \in \dom D^2 u$ we have
\begin{equation}\label{Alexandrov}
0 = \lim_{x \to x_0} \sup_{p \in \p u(x)} \frac{p - Du(x_0) - D^2 u(x_0)(x-x_0) }{|x-x_0|} 
\end{equation}
which asserts differentiability (rather than just approximate differentiability) of $Du$ at $x_0$.
%It also asserts for each $\epsilon>0$, that $u$ agrees with a $C^2$ function $u_\epsilon$ outside
%of a set of volume $\epsilon$.  
%Also $u(x)+ v(y) -s(x,y) \ge0$ vanishes on a compact subset of 
%$\bar X \times \bar Y$,  hence a $\sigma$-compact subset of $X \times Y$.  This shows $\dom F \subset X$
%is also $\sigma$-compact. 

Recall $u(x) + v(y) - s(x,y) \ge 0$ on $X \times \bar Y$.  For each $x \in \dom Du$ at least one $y \in \bar Y$ produces equality,
since the maximum \eqref{s-dual} defining $v^s(x)$ is attained.   This $y$ satisfies 
the first order condition $D_x s(x,y) = Du(x)$,  which identifies it as $y=F(x)$ by the twist condition.
We abbreviate $F = s$-$\exp \circ Du$.
%
% and \eqref{s-dual}
%by saying $u$ is $s$-convex (and $v$ is $\tilde s$-convex) or writing $(u,v) = (v^{s},u^{\tilde s})$. 
%
We note $Du$ is differentiable a.e.\ in a neighbourhood of $x\in \dom F$,
and the map $s$-$\exp$ is well-defined and continuously differentiable in a neighourhood of $(x,Du(x))$ 
by the twist and non-degeneracy of $s$.
Recall for any $\epsilon>0$ the semiconvex function $u$ 
agrees with a $C^2$ smooth function $u_\epsilon$ outside a set
of volume $\epsilon$.  As a result we see the extension $\bar F$ is $C^1$ in a neighbourhood of $\dom Du$, 
except on a set of arbitrarily small
volume,  hence is countably Lipschitz (and approximately differentiable Lebesgue a.e.)
The fact that it is actually differentiable a.e.\ follows from $s$-$\exp \in C^1$ and \eqref{Alexandrov}.

%Since $v$ satisfies an analogous 
%is semiconvex, so for every $k$ agrees with a $C^2$ function $v_k$ on a compact set $Z_k$
%whose complement has volume less than $1/k$.

%that the sets $\dom Dv$ and $\dom \bar D^2 v$ of points 
%where $v$ is (a) differentiable and (b) approximately twice differentiable both have 
%full Lebesgue measure.   Here we use $\bar D$ to denote approximate derivative.

%\marginpar{Is this chain rule more subtle? we kjnow both functions are countably Lipschitz}
%Since $F$ pushes $f$ forward to $g$,  it follows that $F^{-1}(\cup Z_k)$ and 
%$\dom \bar D F$ are both sets of full measure for $f$.  

Since $u(x) + v(y) - s(x,y) \ge 0$ vanishes at $(x,F(x)) \in X \times \bar Y$ 
for each $x\in X_+$,  we  can differentiate \eqref{FOCv} if $x \in \dom DF$ 
to obtain \eqref{JACv}.
%\begin{equation} \label{JACv2}
% [ D^2 v(F(x)) - D^2_{yy} s(x,F(x))] DF(x) =D^2_{xy} s(x,F(x)).
%\end{equation}
Since the right hand side has rank $n$ we conclude both factors on the 
left must have rank $n$ as well.  This shows $J F(x)>0$ and noting \eqref{SOCv} establishes \eqref{Jacobian2}.

Decomposing a probability density $f=f_++f_-$ on $X$ into $f_\pm = f1_{X_\pm}$,
the statements of mutually singularity follow from $Y_+ \cap Y_- = \emptyset = X_+ \cap X_-$.
%and the fact that $Y_-$ has Lebesgue measure zero.
Now decompose $X \setminus X_\infty = \cup_{i=1}^\infty X_i$ into countably many disjoint Borel sets 
$X_i \subset \R^m$ on which $F$ is $C^1$ with $JF(x)>1/i$ on $X_i$,  
plus an $f_+$ negligible set $X_\infty$.  Let $f_i=f_+  1_{X_i}$ denote
the restriction of $f_+$ to $X_i$,  and $g_i := F_\# f_i$ the density of the push-forward of $f_i$.  
It costs no generality to assume $f=0$ on $X_\infty \cup \p X$.
Let $F_i$ denote
a Lipschitz extension of $F$ from $X_i$ to $\R^m$.  For each $\phi \in L^1(\R^m)$, the co-area formula 
\cite[\S 3.4.3]{EvansGariepy92} implies
$$
\int_{X_i} \phi(x) JF_i(x) dx = \int_{\R^n} dy \int_{X_i\cap F_i^{-1}(y)} \phi d{\mathcal H}^{m-n}.
$$
Given $\psi \in L^\infty(\R^n)$ with bounded support ensures 
$\phi = f_i \psi \circ F_i / JF_i \in L^1(\R^m)$ hence
\begin{eqnarray*}
\int_{\R^n} g_i \psi 
&=& \int_{\R^m} f_i \psi \circ F_i
\\ &=&  \int_{\R^n} dy \psi(y) \int_{X_i \cap F_i^{-1}(y)} \frac{f_i}{JF_i} d{\mathcal H}^{m-n}.
\end{eqnarray*}
Recalling $F_i=F$ on $X_i$, we infer
$$%\begin{equation}\label{change of variables}
g_i(y) = \int_{F^{-1}(y)} \frac{f_i(x)}{J F(x)} d{\mathcal H}^{m-n}(x).
$$%\end{equation}
a.e.\ since $\psi \in L^\infty$ had bounded support but was otherwise arbitrary.
Summing on $i$, the disjointness of $X_i$ and the fact that $f =0$ on $X_\infty$
yields \eqref{change of variables+}. 

Now,  $y\in Y_+$ implies $f=f_+$ on $F^{-1}(y) \subset X_+$.  Since $Y_+ = \dom D^2 v$ has
full measure in $Y$,  we can replace $f$ by $f_+$ in \eqref{change of variables+}.  On the other hand,
$X_+ \subset \p_{\tilde s} v(Y_+)$ with the difference satisfying
$\p_{\tilde s} v(Y_+)\setminus X_+\subset \bar X \setminus \dom Du$.
This shows $f$ vanishes on $\p_{\tilde s} v(y) \setminus F^{-1}(y)$.  Thus we can also replace $F^{-1}(y)$
by $\p_{\tilde s} v(y)$. % in either \eqref{change of variables+} or \eqref{change of variables2}.
Finally, \eqref{Jacobian2} relates \eqref{change of variables+} to \eqref{change of variables2}.
\end{proof}

\begin{corollary}[Equivalence of optimal transport to nonlocal PDE]\label{C:nonlocal PDE}
Under the hypotheses of Theorem \ref{T:nonlocal PDE},  let $f$ and $g$ denote probability densities on $X$ 
and $Y$.  If $v=v^{s\tilde s}$ satisfies the nonlocal equation \eqref{nonlocal PDE} then $(v^s,v)$ minimize 
Kantorovich's dual problem
\eqref{Kantorovich dual}.  Conversely,  if $(u,v)=(v^s,u^{\tilde s})$ minimize \eqref{Kantorovich dual}
then $v$ satisfies \eqref{nonlocal PDE}.
\end{corollary}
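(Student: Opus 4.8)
The plan is to deduce Corollary~\ref{C:nonlocal PDE} from Theorem~\ref{T:nonlocal PDE} together with the uniqueness of the optimal map and the characterization of dual minimizers. First I would treat the easier converse direction: suppose $(u,v)=(v^s,u^{\tilde s})$ minimize \eqref{Kantorovich dual}. By standard optimal transport theory (Villani, already invoked in the excerpt), the map $F=s\mbox{-}\exp\circ Du$ of \eqref{FOCu} pushes $f$ forward to $g$, and $F^{-1}(y)\subset \p_{\tilde s}v(y)$ as in \eqref{s-subdifferential}. Since $g$ is absolutely continuous and equals $F_\# f$, we have $F_\#(f_-)=0$; indeed $F_\#(f_-)$ is carried by the Lebesgue-null set $Y_-$, so $F_\#(f_-)$, being a component of the absolutely continuous measure $g$, must vanish. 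Hence $g=g_+=F_\#(f_+)$, and formula \eqref{change of variables2} of Theorem~\ref{T:nonlocal PDE} is exactly \eqref{nonlocal PDE}. This gives one implication.

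For the forward direction, suppose $v=v^{s\tilde s}$ is $\tilde s$-convex and satisfies \eqref{nonlocal PDE}; set $u=v^s$, so $(u,v)$ is an $s$-conjugate pair and is admissible for \eqref{Kantorovich dual}. Let $F=s\mbox{-}\exp\circ Du$ be the associated map from Theorem~\ref{T:nonlocal PDE}, and let $\nu=F_\#f$ with $\nu=\nu_++\nu_-$ the decomposition onto $Y_\pm$ of that theorem. Theorem~\ref{T:nonlocal PDE} gives that $\nu_+$ has Lebesgue density equal to the right-hand side of \eqref{change of variables2}, which by hypothesis \eqref{nonlocal PDE} equals $g(y)$ for a.e.\ $y$. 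The key point is then to upgrade this to $\nu=g$, i.e.\ to rule out the singular part $\nu_-$: since $\nu_+$ already has total mass $\int_{Y_+} g\,dy = \int_Y g\,dy = 1 = \int_X f\,dx = \nu_+(Y_+)+\nu_-(Y_-)$, we conclude $\nu_-(Y_-)=0$, hence $\nu=\nu_+$ is absolutely continuous with density $g$, i.e.\ $F$ pushes $f$ forward to $g$. It remains to argue that this $F$, together with the admissible pair $(u,v)$, is optimal; this follows from the complementary slackness / $c$-cyclical monotonicity characterization of optimality (Villani~\cite{Villani09}): $u(x)+v(y)-s(x,y)\ge 0$ on $X\times\bar Y$ with equality on $\mathrm{graph}(F)$ forces $\int_X s(x,F(x))f\,dx = \int_X u f + \int_Y v g$, which simultaneously shows $F$ attains the sup in \eqref{Monge} and $(u,v)$ attains the min in \eqref{Kantorovich dual}.

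The main obstacle, I expect, is the bookkeeping at the boundary between $Y_+$ and $Y_-$ (equivalently $X_+$ and $X_-$): one must be careful that "$v$ satisfies \eqref{nonlocal PDE} a.e." genuinely controls the absolutely continuous part of $F_\#f$ on all of $\bar Y$ and not merely on $\dom D^2v$, and that no mass escapes onto the Lebesgue-null set $Y_-$. The mass-balance argument above is what closes this gap, but it relies on $Y$ being bounded and on $f,g$ being genuine probability densities, so I would state those normalizations explicitly. A secondary technical point is that the map $F$ furnished by Theorem~\ref{T:nonlocal PDE} depends only on $Du$ (hence on $v$ via $u=v^s$) and is defined independently of whether $(u,v)$ is optimal; the logic must not circularly assume optimality when invoking the change-of-variables formula. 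Everything else — semiconvexity, a.e.\ second-order differentiability, the Jacobian identity \eqref{Jacobian2} — is supplied verbatim by Theorem~\ref{T:nonlocal PDE}.
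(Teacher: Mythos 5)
Your proposal is correct and follows essentially the same route as the paper's own proof: both directions reduce to the change-of-variables formula \eqref{change of variables2} of Theorem \ref{T:nonlocal PDE}, with mass balance ruling out the singular part $\nu_-$ in the forward direction and absolute continuity of $g$ (so that $Y_-=\bar Y\setminus\dom D^2v$ is $g$-negligible) giving $f=f_+$, $g=g_+$ in the converse, followed by the standard complementary-slackness integration to conclude optimality. No gaps to report.
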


\begin{proof}
First suppose $v=v^{s\tilde s}$ satisfies the nonlocal PDE \eqref{nonlocal PDE}.
Setting $u=v^s$ implies for each $x \in  \dom Du$ the inequality
\begin{equation}\label{constraint}
u(x) + v(y) - s(x,y) \ge 0
\end{equation}
is saturated by some $y \in \bar Y$.    Identifying $F(x) = y$ we have the first-order condition
\eqref{FOCu},  whence $F=s$-$\exp \circ Du$ on $\dom Du$.
We claim it is enough to show $F_\#f=g$:  if so, % this implies $f=0$ outside $\dom F$, so 
integrating
$$u(x) + v(F(x)) = s(x,F(x))$$
against $f$ yields
$$
\int_{X} u f +  \int_Y v g = \int_{X} s(x,F(x)) f(x) dx,
$$
which in turn shows $F$ maximizes \eqref{Monge} and $(u,v)$ minimizes \eqref{Kantorovich dual}
as desired.  Comparing \eqref{nonlocal PDE} with \eqref{change of variables2} we see $g_+=g$
is a probability measure, hence has the same total mass as $f$.  This implies $g_-=0$
and $F_\#f= g$ as desired.

Conversely,  suppose $(u,v) = (v^s,u^{\tilde s})$ minimizes \eqref{Kantorovich dual}.
%The Kantorovich problem dual to \eqref{Kantorovich dual} is the following relaxation of \eqref{Monge}:
%among joint probability measures $\gamma \ge 0$ on $X \times Y$ whose marginals are given
%by the densities $f$ and $g$,  find one which maximizes 
%$$
%\int_{X \times Y} s(x,y) d\gamma(x,y,p).
%$$
%The maximum is well-known to be attained and its value coincides with $\bar s(f,g)$ \cite{Villani09}.
Since twistedness of $s$ implies \eqref{Monge} is attained,  there is some map $F:X \longrightarrow \bar Y$
pushing $f$ forward to $g$ such that \eqref{constraint} becomes an equality $f$-a.e. on $Graph(F)$.
This ensures $F=s$-$\exp \circ Du$ holds  $f$-a.e.  
Since $Y_+ := \dom D^2 v \subset \bar Y$ is a set of full measure for $g$,
we conclude $X_+ = F^{-1}(Y_+)$ has full measure for $f$,  whence $f_+ := f1_{X_+}=f$
and $g_+:= F_\#(f_+)=g$.  Now \eqref{nonlocal PDE} follows from \eqref{change of variables2}
as desired.
\end{proof}

\begin{corollary}[Optimal transport via local PDE]\label{C:local PDE}
Under the hypotheses of Theorem \ref{T:nonlocal PDE},  let $f$ and $g$ denote probability densities on $X$ 
and $Y$.  Fix $i=2$ and let $v=v^{s\tilde s}$ have the property that $(s$-$\exp \circ D v^s)_\# f$ vanishes
on $\bar Y \setminus \dom D^2 v$ (as when e.g.~$v \in C^2(\bar V)$).  If %$v$ satisfies
the local equation \eqref{local PDE} holds ${\mathcal H}^n$-a.e.
%and either $s$-$\exp (Dv^s(\dom Du)) \subset Y$ or $v \in C^2(\bar V)$,   
then $(v^s,v)$ minimize Kantorovich's dual problem
\eqref{Kantorovich dual}.   
\end{corollary}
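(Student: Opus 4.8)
The plan is to reduce the local equation \eqref{local PDE} with $i=2$ to the nonlocal equation \eqref{nonlocal PDE}, and then invoke Corollary \ref{C:nonlocal PDE} to conclude that $(v^s,v)$ minimize the Kantorovich dual. So the real content is showing that, under the stated hypothesis, solving $G_2(y,Dv(y),D^2v(y)) = g(y)$ a.e.\ forces \eqref{nonlocal PDE} to hold ${\mathcal H}^n$-a.e. Comparing \eqref{G_i} (with $i=2$) to \eqref{nonlocal PDE}, the two right-hand sides differ only in the domain of integration: $X_2(y,Dv(y),D^2v(y))$ versus $\p_{\tilde s}v(y)$. By the inclusion \eqref{inclusion}, $\p_{\tilde s}v(y) \subset X_2(y,Dv(y),D^2v(y))$ for every $y \in \dom D^2 v$, so it suffices to prove that for ${\mathcal H}^n$-a.e.\ $y$ the integrand vanishes on the difference $X_2(y,Dv(y),D^2v(y)) \setminus \p_{\tilde s}v(y)$; equivalently, that $f$ vanishes ${\mathcal H}^{m-n}$-a.e.\ there (the Jacobian factor being finite and nonnegative once the ellipticity $D^2v - D^2_{yy}s \ge 0$ holds on $X_2$, which is built into the definition of $X_2$).

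First I would set $u = v^s$ and $F = s\text{-}\exp \circ Du$, as in the proof of Corollary \ref{C:nonlocal PDE}, so that $F$ is defined on $\dom Du$ and $F^{-1}(y) \subset \p_{\tilde s}v(y)$. The key comparison is: for $y \in Y_+ := \dom D^2 v$, Theorem \ref{T:nonlocal PDE} gives the change-of-variables formula \eqref{change of variables2}, which expresses the density $g_+(y)$ of $F_\#(f_+)$ (where $f_+ = f1_{X_+}$, $X_+ = F^{-1}(Y_+)$) as the integral in \eqref{nonlocal PDE} over $\p_{\tilde s}v(y)$. The hypothesis that $(s\text{-}\exp\circ Dv^s)_\# f = F_\# f$ vanishes on $\bar Y \setminus \dom D^2 v$ means exactly that $g_- := F_\#(f_-) = 0$, hence $F_\# f = g_+$ is absolutely continuous with $L^1$ density given by \eqref{change of variables2}. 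Call this density $\tilde g(y)$. Then integrating, $\int_Y \tilde g = \|F_\# f\| = \|f\| = 1 = \int_Y g$. On the other hand, the hypothesis \eqref{local PDE} says $G_2(y,Dv(y),D^2v(y)) = g(y)$ a.e., and since $\p_{\tilde s}v(y) \subset X_2(y,Dv(y),D^2v(y))$ with a nonnegative integrand, $\tilde g(y) \le G_2(y,Dv(y),D^2v(y)) = g(y)$ for a.e.\ $y$. Two probability densities with $\tilde g \le g$ a.e.\ must agree a.e.; therefore $\tilde g(y) = g(y)$ for ${\mathcal H}^n$-a.e.\ $y$, which is precisely \eqref{nonlocal PDE}.

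With \eqref{nonlocal PDE} established for $v = v^{s\tilde s}$, Corollary \ref{C:nonlocal PDE} immediately yields that $(v^s, v)$ minimize \eqref{Kantorovich dual}, completing the proof. The main obstacle — and the place requiring care — is the integrability and sign bookkeeping in the inequality $\tilde g \le G_2 = g$: one must know that the Jacobian factor $\det[D^2v(y) - D^2_{yy}s(x,y)] / \sqrt{\det D^2_{xy}s (D^2_{xy}s)^T}$ appearing in both integrals is genuinely nonnegative and finite on $X_2(y,Dv(y),D^2v(y))$, which holds because the defining inequality $D^2_{yy}s(x,y) \le D^2 v(y)$ of $X_2$ makes the numerator nonnegative, while non-degeneracy of $s$ keeps the denominator positive and bounded below on the compact set $\bar Y$ (with $s \in C^2$). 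One should also confirm that $\tilde g$ is genuinely in $L^1$ and that "probability density $\le$ probability density a.e.\ implies equality a.e." is applied correctly — both routine once the nonnegativity is in place. A secondary point is to note that the condition $v \in C^2(\bar V)$ (or more precisely $\dom D^2 v$ having full $g$-measure along fibres of $F$) is what guarantees the vanishing of $F_\# f$ off $\dom D^2 v$, so that $f_+ = f$ and the change-of-variables formula \eqref{change of variables2} applies with $f$ in place of $f_+$, exactly as in the last paragraph of the proof of Theorem \ref{T:nonlocal PDE}.
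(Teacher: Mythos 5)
Your proposal is correct and follows essentially the same route as the paper's proof: use the hypothesis to conclude $f_-=0$ so that $g_+$ from Theorem \ref{T:nonlocal PDE} is a probability density, compare \eqref{local PDE} with \eqref{change of variables2} via the inclusion $\p_{\tilde s}v(y)\subset X_2(y,Dv(y),D^2v(y))$ and the nonnegative integrand to get $g_+\le g$ a.e., deduce equality from equal total mass, and then invoke Corollary \ref{C:nonlocal PDE}. The extra bookkeeping remarks you add (nonnegativity of the determinant on $X_2$, positivity of the denominator) are consistent with, and implicit in, the paper's argument.
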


%\marginpar{BP: I changed this Corollary to apply only to $i=2$.}
\begin{proof}
Fix  $i=2$ and suppose $v=v^{s\tilde s}$ satisfies the local PDE \eqref{local PDE}.
As in the preceding proof, 
setting $u=v^s$ implies for each $x \in \dom Du$ the inequality
\begin{equation} %\label{constraint}
u(x) + v(y) - s(x,y) \ge 0
\end{equation}
is saturated by some $y \in \bar Y$.   
Setting $F(x)=y$ we have the first-order condition
\eqref{FOCu},  whence $F= s$-$\exp_x \circ Du$ on $\dom Du$.  

The present hypotheses assert the $Y_+=\dom D^2 v$ forms a set of full measure for $F_\#f$.
Thus $f_-=0$, while $f=f_+$ and $g_+$ are both probability densities
in Theorem \ref{T:nonlocal PDE}.  
Recalling $\p_{\tilde s} v(y) \subset X_2(y,Dv(y),D^2v(y))$ from \eqref{inclusion}, %\eqref{FOCv}--\eqref{SOCv},
we deduce $g \ge g_+$ by comparing \eqref{local PDE} with \eqref{change of variables2}.
Since both densities integrate to $1$, this implies $g=g_+$ a.e.   Thus \eqref{nonlocal PDE}
is actually satisfied and Corollary \ref{C:nonlocal PDE}
asserts $(v^s, v)$ minimizes \eqref{Kantorovich dual}.
\end{proof}

%- smoothness of $G_i$ ?

%- ellipticity of $G_i$, especially $G_2$

\section{Local PDE from optimal transport}
\label{S:localPDE}

As a partial converse to the preceding corollary, we assert that for either the more restrictive 
($i=1$) or less restrictive ($i=2$) local partial differential 
equation \eqref{local PDE} to admit solutions, it is sufficient that the Kantorovich dual
problem have a smooth minimizer $(u,v)$, with connected potential indifference sets $X_i(y,Dv(y),D^2 v(y))$
--- in which case $v$ also solves \eqref{local PDE}.

\begin{theorem}[When a smooth minimizer implies nestedness]\label{thm: smoothness implies nestedness}
Fix $m\ge n$, probability densities $f$ and $g$ on open sets $X \subset \R^m$ and $Y \subset \R^n$ 
with $Y$ bounded, and $s \in C^2(X \times \bar Y)$ twisted and non-degenerate.  Let $i \in \{1,2\}$. If
$(u,v)=(v^s,u^{\tilde s}) \in C^2(X) \times C^2(Y)$ minimizes the Kantorovich dual \eqref{Kantorovich dual}
and $X_i(y, Dv(y), D^2 v(y))$ is connected for $g$-a.e.\ $y\in Y$,  then the local
equation \eqref{local PDE} holds g-a.e.
\end{theorem}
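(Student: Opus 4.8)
The plan is to derive the nonlocal equation for $v$ from Corollary~\ref{C:nonlocal PDE}, reinterpret its domain of integration $\partial_{\tilde s}v(y)$ as a fibre of the optimal map $F=s$-$\exp\circ Du$, and then exploit connectedness to show this fibre exhausts $X_i(y,Dv(y),D^2v(y))$.

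Since $(u,v)=(v^s,u^{\tilde s})$ minimizes the Kantorovich dual \eqref{Kantorovich dual}, Corollary~\ref{C:nonlocal PDE} tells us $v$ satisfies the nonlocal equation \eqref{nonlocal PDE} for Lebesgue-, hence $g$-, a.e.\ $y\in Y$. First I would record that $\partial_{\tilde s}v(y)=F^{-1}(y)$ for every $y\in Y$: because $u\in C^2$, the supremum \eqref{s-dual} defining $u(x)=v^s(x)$ is attained at the unique point $y$ with $D_x s(x,y)=Du(x)$, i.e.\ at $y=F(x)$, so $u(x)+v(F(x))=s(x,F(x))$ for all $x\in X$, and consequently $x\in\partial_{\tilde s}v(y)\iff u(x)+v(y)=s(x,y)\iff F(x)=y$. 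Comparing \eqref{nonlocal PDE} with the definition \eqref{G_i} of $G_i$, and using the inclusion $\partial_{\tilde s}v(y)\subset X_i(y,Dv(y),D^2v(y))$ from \eqref{inclusion} (valid at each $y\in Y\subset\dom D^2v$), the theorem reduces to the set identity
\[
\partial_{\tilde s}v(y)=X_i\big(y,Dv(y),D^2v(y)\big)\qquad\text{for $g$-a.e.\ }y\in Y,
\]
since then the two $\mathcal H^{m-n}$-integrands agree and $g(y)=G_i(y,Dv(y),D^2v(y))$.

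To prove this identity, fix $y$ in the full $g$-measure set where \eqref{nonlocal PDE} holds, where $g(y)>0$, and where $M:=X_i(y,Dv(y),D^2v(y))$ is connected; put $K:=\partial_{\tilde s}v(y)=F^{-1}(y)\subset M$. The set $K$ is relatively closed in $M$, being the zero locus of the continuous nonnegative function $x\mapsto u(x)+v(y)-s(x,y)$, and it is nonempty because its nonlocal integral equals $g(y)>0$. The heart of the argument is to show $K$ is relatively open in $M$. Fix $x_0\in K$, so $F(x_0)=y$. Since $u\in C^2(X)$ and $v\in C^2(Y)$, Theorem~\ref{T:nonlocal PDE} gives $JF(x_0)>0$, so \eqref{Jacobian2} forces $\det[D^2v(y)-D^2_{yy}s(x_0,y)]>0$, which together with \eqref{SOCv} makes $D^2v(y)-D^2_{yy}s(x_0,y)$ positive definite, in particular invertible. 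Apply the implicit function theorem to $\Phi(x,y'):=D_y s(x,y')-Dv(y')$ near $(x_0,y)$: here $\Phi(x_0,y)=0$ (the first-order condition at the interior maximizer $y=F(x_0)$ of $y'\mapsto s(x_0,y')-v(y')$ over $\bar Y$) and $D_{y'}\Phi(x_0,y)=D^2_{yy}s(x_0,y)-D^2v(y)$ is invertible. This produces neighbourhoods $U\ni x_0$ and $V\ni y$ with $V\subset Y$, and a $C^1$ map $\eta\colon U\to V$ whose graph is exactly $\{\Phi=0\}\cap(U\times V)$. For $x\in X_1(y,Dv(y))\cap U$ the defining relation $D_y s(x,y)=Dv(y)$ gives $\Phi(x,y)=0$, hence $\eta(x)=y$; shrinking $U$ so that $F(U)\subset V$, every $x\in U$ has $F(x)\in V\subset Y$ an interior maximizer of $y'\mapsto s(x,y')-v(y')$, so $\Phi(x,F(x))=0$ and thus $F(x)=\eta(x)$. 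Therefore $F\equiv y$ on $X_1(y,Dv(y))\cap U$; since $M\subset X_1(y,Dv(y))$ this yields $M\cap U\subset K$, and as $x_0\in K$ was arbitrary, $K$ is open in $M$. A nonempty relatively clopen subset of the connected set $M$ equals $M$, so $K=M$, the displayed identity holds, and $G_i(y,Dv(y),D^2v(y))=g(y)$ follows for $g$-a.e.\ $y$.

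I expect the openness step to be the main obstacle: propagating the relation $F\equiv y$ from a single point of the indifference manifold $X_1(y,Dv(y))$ to a neighbourhood within it requires positivity of the Jacobian $JF$ along $F^{-1}(y)$ --- equivalently the uniform ellipticity $D^2v-D^2_{yy}s>0$ there, which is precisely what the $C^2$-smoothness of the minimizer buys --- so that the level set $\{y' : D_y s(x,y')=Dv(y')\}$ is locally a single point depending continuously on $x$. This is the higher-dimensional incarnation of the nestedness phenomenon identified for $n=1$ in \cite{ChiapporiMcCannPass17}.
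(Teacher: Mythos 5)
Your proof is correct, and it shares the paper's overall skeleton --- reduce, via Corollary \ref{C:nonlocal PDE} and the inclusion \eqref{inclusion}, to showing that $\p_{\tilde s}v(y)$ is a nonempty, relatively clopen subset of the connected set $X_i(y,Dv(y),D^2v(y))$ --- but it implements the crucial openness step by a genuinely different mechanism. The paper observes that $C^2$ smoothness makes $DF$ of full rank (via \eqref{JACv}), invokes the Local Submersion Theorem \cite{GuilleminPollack74} to put $F$ in canonical form near a point $x'$ of $\p_s v(y')$, and then squeezes the chain $F^{-1}(y')\cap U\subset\p_s v(y')\cap U\subset X_i'\cap U\subset X_1'\cap U$ against the fact (Proposition 2 of \cite{ChiapporiMcCannPass17}) that $X_1'$ is an $(m-n)$-dimensional submanifold, so dimension counting forces equality throughout the chain. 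You instead apply the implicit function theorem to $\Phi(x,y')=D_y s(x,y')-Dv(y')$, using the strict second-order condition $D^2v(y)>D^2_{yy}s(x_0,y)$ along $F^{-1}(y)$ (which follows from the rank argument behind \eqref{Jacobian2} together with \eqref{SOCv}; strictly speaking it is the full rank of the left side of \eqref{JACv}, rather than the formula \eqref{Jacobian2} itself, that gives invertibility) to obtain local uniqueness of the root in $y'$, and then conclude $F\equiv y$ on $X_1(y,Dv(y))\cap U$ because both $F(x)$ and the constant $y$ solve $\Phi(x,\cdot)=0$ in the uniqueness neighbourhood. Both routes hinge on the same nondegeneracy supplied by the $C^2$ minimizer, but yours is somewhat more self-contained: it bypasses the submersion theorem and the submanifold structure of $X_1$, at the modest cost of checking that $F$ maps a small neighbourhood of $x_0$ into $V\subset Y$ so that the interior first-order condition in $y$ applies (your continuity argument handles this). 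Two further minor differences: you verify explicitly that $u\in C^2$ yields $\p_{\tilde s}v(y)=F^{-1}(y)$ for every $y$, and you obtain nonemptiness from $g(y)>0$ combined with \eqref{nonlocal PDE}, whereas the paper simply restricts attention to those $y$ with $\p_s v(y)\neq\emptyset$, a set carrying the full mass of $g$; both devices are adequate.
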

%\marginpar{BP: I removed references to $G \geq g$ from this proof, as this is false for $i=1$ (I believe the proof still works). }
\begin{proof}
Corollary \ref{C:nonlocal PDE} implies $v$ solves the non-local equation \eqref{nonlocal PDE}.
The local equation $G=g$ %$G(y,Dv(y),D^2 v(y)) = g(y)$ 
follows from equality in the inclusion 
\begin{equation}\label{i-inclusion}
\p_s v(y) \subset X_i(y,Dv(y),D^2v(y))
\end{equation}
from \eqref{inclusion} % then implies $G(y,Dv(y),D^2v(y)) \ge g(y)$ 
for  $\mathcal H^n$-a.e. $y$.
%When the difference between the two sets in \eqref{i-inclusion} is ${\mathcal H}^{m-n}$ negligible,
  
We now derive this equality for all $y'\in Y$ with $\p_s v(y')$ non-empty and $X_i':=X_i(y',Dv(y'),D^2 v(y'))$ connected. %satisfying $G \ge g$. 
This set contains the full mass of $g$.

Observe both $\p_s v(y')$ and $X_i'$ are relatively closed subsets of $X$.
Thus $\p_s v(y')$ is also closed relative to $X_i'$.   To show it is relatively open, 
let $x' \in \p_s v(y')$.
Since $u,v\in C^2$ we see $F \in C^1(X)$ and $DF$ has full rank at $x'$.
By the Local Submersion Theorem \cite{GuilleminPollack74},  this means we can find 
a $C^1$ coordinate chart on a neighbourhood $U \subset X$ of $x'$ in which $F$ acts as the canonical
submersion:  $F(x_1,\ldots,x_n,x_{n+1},\ldots,x_m) = (x_1,\ldots, x_n)$.  In these coordinates,
$$[ \{y'\} \times \R^{m-n} ]\cap U = F^{-1}(y') \cap U \subset \p_s v(y') \cap U \subset X_i' \cap U \subset X_1' \cap U
$$ 
follows from \eqref{i-inclusion}.
But Proposition 2 of \cite{ChiapporiMcCannPass17} shows
$X_1'$ to be an $m-n$ dimensional submanifold of $X$,  so equality must hold in this
chain of inclusions (at least if $U$ is a ball in the new coordinates).  
This shows $x'$ lies in the interior of $\p_s v(y')$ relative to $X_i'$,
concluding the proof that $\p_s v(y')$ is relatively open.  Thus $\p_s v(y') = X_i'$ since the former
is open, closed and non-empty and the latter is connected.  Equality in \eqref{i-inclusion}
has been established for $g$-a.e.\ $y$, concluding the proof.
\end{proof}

%{\bf - examples!

%[BP: I ADDED BELOW THE ANNULUS EXAMPLE...ARE THERE OTHER EXAMPLES WE WANT TO CONSIDER HERE?] 
%}

The following example shows that the level set connectivity assumption is required to deduce nestedness; it also illustrates why it may be necessary to consider the  $i=2$ case of the local equation.
\begin{example}[Annulus to circle]
Consider transporting uniform mass on the  annulus, $X=\{x \in \mathbf{R}^2: 1/2 \leq |x| \leq 1\}$ to uniform measure on the circle, $Y =\{y \in \mathbf{R}^2:  |y| = 1\}$ with the bilinear surplus, $s(x,y) =x \cdot y$.  It is easy to see that $x \cdot y \leq |x|$, with equality only when $y=\frac{x}{|x|}$, implying that the optimal map takes the form $F(x) =\frac{x}{|x|}$ and the potentials $u(x) =|x|$, $v(y) =0$.   These are smooth on the regions $X$ and $Y$.  Note that $X_2(y, Dv(y), D^2 v(y))=\{x \in X: \frac{x}{|x|}=y\}$ is connected and coincides with $\partial^sv(y)$ (as is guaranteed by the preceding theorem).  On the other hand,    $X_1(y, Dv(y), D^2 v(y))=\{x \in X: \frac{x}{|x|}=y\}\cup \{x \in X: \frac{x}{|x|}=-y\}$ is disconnected and the inclusion $ \partial^sv(y) \subseteq X_1(y, Dv(y), D^2 v(y)) $ is strict.
\end{example}

\section{Concerning the regularity of maps}
\label{S:maps}

This section collects some conditional results which illustrate how strong $s$-convexity of $v$
plus a connectedness condition can imply the continuity and differentiability of optimal maps.
In the case of  equal dimensions, a related connectedness requirement appears in work of 
Loeper \cite{Loeper09}.
This section is purely $s$-convex analytic; no measures are mentioned.

\begin{lemma}[Continuity of maps (local)]\label{L:continuity}
	Fix $m\ge n$, open sets $X \subset \R^m$ and $Y \subset \R^n$ 
	with $Y$ bounded, and $s \in C^2(X \times \bar Y)$ twisted and non-degenerate.
	Let  $(u,v)=(v^s,u^{\tilde s})$ and $D^2 v(y) > D^2_{yy} s(x,y)$ for some 
	$(x,y) \in X \times [\p_s v^s(x) \cap \dom D^2 v]$.  The isolated point $y$ forms a $C^1$-path-connected
	component of $\p_s u(x)$. % containing $y$ consists of a single point.
	Thus $x \in \dom Du$ if, in addition, $\p_s u(x)$ is $C^1$-path-connected.
\end{lemma}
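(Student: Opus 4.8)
The plan is to exploit the full-rank condition $D^2 v(y) > D^2_{yy}s(x,y)$ to show that $y$ is a nondegenerate critical point of the function $y' \mapsto s(x,y') - v(y')$, hence isolated among maximizers, and then to use the strict inequality to rule out any other maximizer being $C^1$-path-connected to $y$ inside $\p_s u(x)$. Recall that $z \in \p_s u(x)$ precisely when $u(x) + \text{(the value of $v$ adjusted)} \ldots$ — more precisely, when $s(x,z) - v(z) = \sup_{z' \in \bar Y} s(x,z') - v(z') = u(x)$; thus $\p_s u(x)$ is exactly the set of maximizers of $h_x(z) := s(x,z) - v(z)$ over $\bar Y$. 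Since $v$ is semiconvex and Lipschitz (Theorem~\ref{T:nonlocal PDE}) and $s \in C^2$, the function $h_x$ is semiconcave; at the point $y \in \dom D^2 v$ it admits a second-order Taylor expansion with $Dh_x(y) = D_y s(x,y) - Dv(y) = 0$ (first-order condition for the maximum) and Hessian $D^2 h_x(y) = D^2_{yy}s(x,y) - D^2 v(y) < 0$ by hypothesis. A negative-definite second-order expansion at an interior maximizer forces $h_x(z) < h_x(y)$ for all $z \ne y$ in a punctured neighbourhood of $y$, so $y$ is an isolated point of $\p_s u(x)$.

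Next I would upgrade "isolated point" to "its own $C^1$-path-connected component." Suppose $\gamma:[0,1] \to \p_s u(x)$ is a $C^1$ path with $\gamma(0) = y$. Since $h_x$ is constant ($=u(x)$) along $\gamma$, and $h_x$ is differentiable at $y$ with $Dh_x(y)=0$, differentiating $t \mapsto h_x(\gamma(t))$ at $t=0$ gives no contradiction directly; instead I would use the strict second-order expansion: for small $t$, $h_x(\gamma(t)) \le h_x(y) + D^2 h_x(y)[\gamma(t)-y, \gamma(t)-y]/2 + o(|\gamma(t)-y|^2) < h_x(y)$ unless $\gamma(t) = y$. Hence $\gamma(t) = y$ for all small $t$, and by a standard connectedness/continuation argument along the compact interval $[0,1]$ (the set $\{t : \gamma(t) = y\}$ is closed, and the above shows it is open in $[0,1]$), $\gamma \equiv y$. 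Therefore $y$ is its own $C^1$-path-connected component of $\p_s u(x)$.

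Finally, for the last sentence: if in addition $\p_s u(x)$ is $C^1$-path-connected, then it has a single $C^1$-path-connected component, which we have just shown equals $\{y\}$; so $\p_s u(x) = \{y\}$ is a singleton. By the twist condition and the first-order condition \eqref{FOCu}, a singleton $s$-superdifferential at $x$ is equivalent to $u$ being differentiable at $x$ with $D_x s(x,y) = Du(x)$ — this is the standard fact (used repeatedly via $F = s\text{-}\exp \circ Du$) that $x \in \dom Du$. I expect the main obstacle to be the careful handling of the second-order Taylor expansion of the semiconcave function $h_x$ at $y$: one must invoke Alexandrov-type differentiability (as recorded in \eqref{Alexandrov}) to know that the expansion genuinely controls $h_x$ from above in a full neighbourhood, not merely along approach sequences, and to ensure the path argument in the second paragraph closes without a gap. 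Everything else is routine given Theorem~\ref{T:nonlocal PDE}.
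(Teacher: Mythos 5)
Your proposal is correct and follows essentially the same route as the paper: exploit the genuine second-order (Alexandrov) expansion of $v$ at $y\in\dom D^2 v$ together with the strict inequality $D^2 v(y)>D^2_{yy}s(x,y)$ to show no nontrivial $C^1$ motion inside $\p_s u(x)$ can leave $y$ (the paper phrases this as the initial velocity of any such curve lying in the nullspace of a positive-definite matrix, while you first establish isolation of $y$ among maximizers of $h_x$, a slightly stronger but equivalent use of the same expansion), and then conclude $\p_s u(x)=\{y\}$ and hence $x\in\dom Du$. The only cosmetic difference is the last step: the paper derives differentiability of the semiconvex $u$ by showing $Du(x_k)\to D_x s(x,y)$ along any sequence $x_k\in\dom Du\to x$, whereas you cite the standard "singleton $s$-superdifferential implies differentiability" fact, which is precisely what that limiting argument proves.
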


\begin{proof}
	Fix $(u,v)$ and $(x,y)$ as in the lemma.
	Let $y: t\in[0,1] \mapsto y(t) \in \p_s u(x)$ be a continuously differentiable curve departing from $y(0)=y$ 
	with non-zero velocity $y'(0) \ne 0$.  Since the non-negative function $u(x) + v(\cdot) - s(x,\cdot) \ge 0$
	vanishes on this curve,  differentiation shows $y'(0)$ to be in the nullspace of 
	$D^2 v(y) - D^2_{yy} s (x,y)$.  This contradicts the positive-definiteness assertion and shows
	no such curve can exist.
	
	Thus $C^1$-path connectedness implies $\p_s u(x) = \{ y\}$.  The semiconvexity
	of $u$ shown in Theorem \ref{T:nonlocal PDE} implies $x \in \dom Du$ provided
	we can establish convergence of $Du(x_k)$ to a unique limit  %(namely $D_x s(x,y)$)
	whenever $x_k \in \dom Du$ converges to $x$.  Therefore, let $x_k \in \dom Du$ converge to $x$,
	and choose $y_k \in \p_s v(x_k)$. Any accumulation point $y_\infty$ of the $y_k$ satisfies
	$y_\infty \in \p_s v(x)=\{y\}$.  Now letting $k\to \infty$ in $Du(x_k) = D_x s(x_k,y_k)$ 
	yields $Du(x_k) \to D_x s(x,y)$ to establish $x \in \dom Du$.
\end{proof} 

\begin{corollary}[Continuity of maps (global)]\label{C:continuity}
	Fix $m\ge n$, open sets $X \subset \R^m$ and $Y \subset \R^n$ with $Y$ bounded, and 
	$s \in C^2(X \times \bar Y)$ twisted and non-degenerate.
	Let  $(u,v)=(v^s,u^{\tilde s})$ with $v \in C^2(\bar Y)$.  Then $u \in C^1(X)$ if %and only if
	for each $x\in X$: $\p_s u(x)$ is $C^1$-path connected and $D^2 v(y) > D^2_{yy} s(x,y)$ 
	for some $y \in \p_s u(x)$.
\end{corollary}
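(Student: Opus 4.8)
The plan is to deduce Corollary \ref{C:continuity} directly from the local Lemma \ref{L:continuity} by checking its hypotheses at every point $x \in X$. Since $v \in C^2(\bar Y)$, the set $\dom D^2 v$ is all of $\bar Y$, so for any $x \in X$ and any $y \in \p_s u(x)$ we have $y \in \p_s v^s(x) \cap \dom D^2 v$; hence the hypothesis of Lemma \ref{L:continuity} concerning the existence of a point with $D^2 v(y) > D^2_{yy}s(x,y)$ is exactly the assumption stated in the corollary. First I would fix an arbitrary $x \in X$, pick the point $y \in \p_s u(x)$ guaranteed by the corollary's hypothesis, and invoke Lemma \ref{L:continuity}: this shows $y$ is an isolated, $C^1$-path-connected component of $\p_s u(x)$, and since $\p_s u(x)$ is assumed $C^1$-path-connected it must then equal $\{y\}$, so $x \in \dom Du$.

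Having shown $X = \dom Du$, it remains to upgrade pointwise differentiability to $u \in C^1(X)$, i.e.\ continuity of $x \mapsto Du(x)$. Here I would re-use the accumulation-point argument from the proof of Lemma \ref{L:continuity}: given $x_k \to x$ in $X = \dom Du$, choose $y_k \in \p_s u(x_k)$; boundedness of $Y$ gives a convergent subsequence $y_{k_j} \to y_\infty$, and upper semicontinuity of the $s$-subdifferential (which follows from continuity of $s$ and of $u$, the latter from semiconvexity established in Theorem \ref{T:nonlocal PDE}) forces $y_\infty \in \p_s u(x) = \{y\}$. Then passing to the limit in the first-order condition $Du(x_{k_j}) = D_x s(x_{k_j}, y_{k_j})$ and using $s \in C^2$ yields $Du(x_{k_j}) \to D_x s(x,y) = Du(x)$; since every subsequence has a further subsequence converging to the same limit, the full sequence $Du(x_k)$ converges to $Du(x)$, giving continuity.

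The main obstacle, such as it is, is bookkeeping rather than mathematics: one must be careful that the hypothesis "$D^2 v(y) > D^2_{yy}s(x,y)$ for some $y \in \p_s u(x)$" is applied at the \emph{same} $y$ whose isolatedness is concluded, and that $\p_s u(x)$ is nonempty to begin with (which holds because $u = v^s$ is $s$-convex, so $\p_s u(x) \ne \emptyset$ for every $x \in X$ by \eqref{s-dual} and boundedness of $\bar Y$). One should also note that $u$ being differentiable \emph{and} semiconvex on the open set $X$ already implies $u \in C^1(X)$ by a standard fact about semiconvex functions, which provides an alternative to the explicit accumulation-point argument; I would mention this as the quickest route but include the direct argument for completeness since it makes the role of the connectedness hypothesis transparent.
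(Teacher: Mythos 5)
Your proposal is correct and follows essentially the same route as the paper: apply Lemma \ref{L:continuity} at each $x\in X$ (noting $v\in C^2(\bar Y)$ makes $\dom D^2 v=\bar Y$, so its hypotheses reduce to those of the corollary) to conclude $X=\dom Du$, and then use the semiconvexity of $u$ from Theorem \ref{T:nonlocal PDE} to upgrade everywhere-differentiability to $u\in C^1(X)$. The explicit accumulation-point argument you include is a harmless (and correct) elaboration of that last step, which the paper dispatches by citing the same standard fact about semiconvex functions.
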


\begin{proof}
	Lemma \ref{L:continuity} implies $X = \dom Du$ under the hypotheses of Corollary \ref{C:continuity}.
	Since semiconvexity of $u$ was shown in Theorem \ref{T:nonlocal PDE},  this is sufficient to conclude 
	$u \in C^1(X)$.
\end{proof}

\begin{proposition}
	[Criteria for differentiability of maps]
	\label{P:BV}
	Fix $m\ge n$, open sets $X \subset \R^m$ and $Y \subset \R^n$ with $Y$ bounded, and 
	$s \in C^2(X \times \bar Y)$ twisted and non-degenerate.
	Use  $(u,v)=(v^s,u^{\tilde s})$ with $u \in C^1(X)$ to define $F:X \longrightarrow \bar Y$ through
	\eqref{FOCu}.  Then  both %$F \in (BV_{loc}\cap C)(X)$,
	$F$ and $D_x s_y (\cdot,F(\cdot))$ are in $(BV_{loc}\cap C) (X,\R^n)$.
	If, in addition, $v\in C^{1,1}(Y)$ then $F \in \Dom D^2 v$ on a set of $|DF|$ full measure,
	and as measures
	\begin{equation}\label{distributional map gradient} 
	(D^2 v(F(x))-s_{yy}(x,F(x)))DF(x) = D_x s_y(x,F(x)).
	\end{equation}
	In this case,  $F$ is Lipschitz in any open subset of $X$ where
	\begin{equation}\label{uniform ellipticity}
	D^2 v(F(x))-D^2_{yy} s(x,F(x)) \ge \epsilon I>0
	\end{equation}
	is uniformly positive definite (and $F$ inherits higher differentiability from $v$ and $s$ in this case).
\end{proposition}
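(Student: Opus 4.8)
The plan is to build up the regularity of $F$ in stages, each step feeding into the next. First I would observe that since $u \in C^1(X)$ and $s$ is $C^2$, twisted and non-degenerate, the composition $F = s\text{-}\exp\circ Du$ is continuous; and since $D_x s_y(\cdot, F(\cdot))$ is a $C^1$ function of $(x, F(x))$ composed with the continuous map $F$, it too is continuous. To get the $BV_{loc}$ claims I would use the semiconvexity of $u$ from Theorem \ref{T:nonlocal PDE}: a semiconvex function has $D^2u$ a (matrix-valued, locally finite) Radon measure, so $Du \in BV_{loc}(X, \R^m)$. Now $F = s\text{-}\exp\circ Du$ is a $C^1$ function post-composed with a $BV_{loc}\cap C$ map; by the chain rule for $BV$ functions (composition of a $C^1$ function with a continuous $BV$ function stays $BV_{loc}$, Vol'pert / Ambrosio–Dal Maso), $F \in BV_{loc}\cap C$. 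The same chain-rule argument applied to the $C^1$ map $(x,y)\mapsto D_x s_y(x,y)$ composed with $x\mapsto (x,F(x)) \in BV_{loc}\cap C$ gives $D_x s_y(\cdot,F(\cdot)) \in BV_{loc}\cap C$.

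For the second assertion, assume $v\in C^{1,1}(Y)$, so $D^2 v$ exists a.e.\ and is bounded, and $v$ is twice differentiable a.e.\ by Rademacher/Alexandrov. I would argue that at $|DF|$-a.e.\ point the first-order condition $D_x s(x, F(x)) = Du(x)$ can be differentiated. The cleanest route: $F \in BV_{loc}$, so by the Federer–Vol'pert structure theorem $F$ is approximately differentiable $\mathcal L^m$-a.e., and along the absolutely continuous part of $DF$ the chain rule holds pointwise; one must additionally check that the set where $v$ fails to be twice differentiable pulls back under $F$ to a $|DF|$-null set. This is where the non-degeneracy of $s$ enters: where $DF$ is (approximately) nonzero, $F$ behaves like a submersion in an approximate sense, so preimages of Lebesgue-null sets in $Y$ are $|DF|$-null. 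Granting that, differentiating \eqref{FOCu} in the approximate sense gives, $|DF|$-a.e.,
\begin{equation*}
D^2 u(x) = D^2_{xx}s(x,F(x)) + D^2_{xy}s(x,F(x))\, DF(x),
\end{equation*}
and combining with the differentiated first-order condition for $v$, namely $Dv(F(x)) = D_y s(x, F(x))$ (valid wherever $F(x)\in\dom D^2 v$), yields \eqref{distributional map gradient}. I would phrase the final identity as an equality of $\R^{m\times n}$-valued measures, with both sides understood via the $BV$ chain rule, so that no genuine second derivative of $u$ need be invoked — only $D^2 v(F(x))$, which exists $|DF|$-a.e.\ by the pullback argument.

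For the last assertion, on an open set where \eqref{uniform ellipticity} holds I would invert: \eqref{distributional map gradient} gives $DF(x) = [D^2 v(F(x)) - D^2_{yy}s(x,F(x))]^{-1} D_x s_y(x,F(x))$ as an identity of measures, and the right-hand side is an $L^\infty$ function — the inverse matrix has operator norm $\le 1/\epsilon$ and $D_x s_y$ is bounded since $s\in C^2$ and $\bar Y$ is compact. A $BV_{loc}$ function whose distributional gradient is (represented by) an $L^\infty$ function is locally Lipschitz, so $F$ is Lipschitz on that open set. Once $F$ is Lipschitz, \eqref{distributional map gradient} holds $\mathcal L^m$-a.e.\ as an equation between $L^\infty$ functions, $F(x)\in\dom D^2 v$ a.e., and a bootstrap in the usual way (if $v \in C^{k,\alpha}$ and $s\in C^{k+1,\alpha}$ then the right-hand side of the inverted equation is $C^{k-1,\alpha}$ in $x$ once $F$ is known to be $C^{k-2,\alpha}$, improving $F$) gives the inherited higher differentiability.

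The main obstacle I anticipate is the pullback/measure-theoretic step: justifying that \eqref{distributional map gradient} really holds $|DF|$-a.e.\ rather than just $\mathcal L^m$-a.e., i.e.\ that $F(x)\in\dom D^2 v$ for $|DF|$-a.e.\ $x$ and that the $BV$ chain rule produces exactly the claimed identity on the diffuse (Cantor) part of $DF$ and not just the absolutely continuous part. This requires the $BV$ chain rule in the form of Ambrosio–Dal Maso for compositions $v'\circ F$ with $v'\in C^{0,1}$ and $F\in BV\cap C$, together with the non-degeneracy of $s$ to control how $F$ distorts null sets; the continuity of $F$ (no jump part) simplifies matters but the Cantor part still needs care.
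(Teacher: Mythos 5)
Your proposal is correct and follows essentially the same route as the paper: continuity of $F=s$-$\exp\circ Du$, $BV_{loc}$ regularity from semiconvexity of $u$ plus the Ambrosio--Dal Maso composition/chain rule, the measure-valued identity \eqref{distributional map gradient} by differentiating $Dv(F(x))=D_ys(x,F(x))$ (with $F\in\Dom D^2v$ $|DF|$-a.e.\ and absence of a jump part coming from the same reference and the continuity of $F$), and Lipschitz continuity by inverting under \eqref{uniform ellipticity} to see $DF\in L^\infty$. The only difference is cosmetic: the detour through $D^2u$ is unnecessary, and the ``pullback of null sets'' heuristic you flag is exactly the point the paper disposes of by citing Ambrosio--Dal Maso, which you ultimately invoke as well.
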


\begin{proof}
	Recalling
	\begin{equation}\label{ufoc}
	Du(x) = D_x s(x,F(x)),
	\end{equation}
	the continuity $F= s$-$\exp \circ Du$ follows from $u\in C^1(X)$ and the twistedness and non-degeneracy of $s$.
	
	Since $u$ from Theorem \ref{T:nonlocal PDE} is also semiconvex, 
	its directional derivatives lie in $BV(X)$ and its gradient in $BV(X,\R^2)$.  
	We shall use \eqref{ufoc}
	to deduce $F \in BV_{loc}(X)$,  which means its directional
	weak derivatives are signed Radon measures on $X$.  Fix $x' \in X$
	and set $y'=F(x')\in Y$.  Since $D^2_{xy} s$ has full rank, we can invert \eqref{ufoc}
	to express
	$$
	F(x) = \left[ %y \mapsto 
	D_x s(x,\cdot)\right]^{-1} Du(x)
	$$
	as the composition of a $C^1_{loc}$ map and a componentwise $BV$ map.
	This shows $F \in BV_{loc}(X,\R^n)$ \cite{AmbrosioDalMaso90}.
	
	%\marginpar{These parrotted conclusions need to be checked}
	
	On the other hand, when $Dv$ is assumed Lipschitz, Ambrosio and Dal Maso
	\cite{AmbrosioDalMaso90} assert $F \in \Dom D^2 v$ on a set of $|DF|$ full measure,
	and differentiating 
	$
	Dv(F(x)) = D_ys(x,F(x))
	$
	yields \eqref{distributional map gradient}
	in the sense of measures; 
	$DF$ has no jump part since $F$ is continuous.  
	The fact that $F$ inherits the
	Lipschitz smoothness (and higher differentiability) from $Dv$ follows immediately by rewriting
	\eqref{distributional map gradient}--\eqref{uniform ellipticity} in the form
	$$
	DF(x) = (D^2 v(F(x))-s_{yy}(x,F(x)))^{-1} D_x s_y(x,F(x)) \in L^\infty(X).
	$$\hfill
\end{proof}

\section{Ellipticity and  potential regularity beyond $C^{2,\alpha}$} %_{loc}$}
\label{S:ellipticity}

The previous sections show optimal transportation is often
equivalent to solving a nonlinear partial differential equation --- local or nonlocal.
As an application of this reformulation
we show how higher regularity of the solution $v$ on the lower dimensional domain 
can be bootstrapped from its first $2+\alpha$ derivatives. % {\bf and uniform $s$-convexity}.  
This application, though well-known when $n=m$,
 is novel in unequal dimensions. % (but well-known when $n=m$). 
It also highlights the need for a theory which explains when $v$ can be expected
to be $C^{2,\alpha}_{loc}$, % {\bf and uniformly $s$-convex},  
to parallel known results beginning
with \cite{Caffarelli92} \cite{MaTrudingerWang05} for $n=m$;  we identify conditions ensuring this when $n=1$ in the last two sections (see Remark \ref{rem: higher order smoothness}).
%\marginpar{BP: Do we use uniform $s$-convexity in this section?}
Recall that a second-order differential operator $G(y,p,Q)$ is said to be 
{\em degenerate elliptic} if
$G(y,p,Q') \ge G(y,p,Q)$ whenever $Q' \ge Q$, i.e.\ whenever $Q'-Q$ is non-negative definite and both
$Q$ and $Q'$ are symmetric.   
We say the ellipticity is {\em strict} at $(y,p,Q)$  if % $G(y,p,\cdot)$ is locally Lipschitz near $Q$ and 
there is a constant $\lambda=\lambda(y,p,Q)>0$ called the {\em ellipticity constant} such that $Q' \ge Q$ implies
%and a neighbourhood $\mathcal Q$ of $Q$ in the space of symmetric matrices such that all $Q' \in \mathcal Q$ satisfy
\begin{equation}\label{locally uniform ellipticity}
%\lambda \|Q'-Q\| \le 
G(y,p,Q') - G(y,p,Q) \ge \lambda \tr [Q'-Q].
\end{equation}

\begin{lemma}[Strict ellipticity]\label{L:strict ellipticity}
The operator $G$ defined by \eqref{potential indifference sets} and \eqref{G_i} with $i=2$ is degenerate elliptic. 
Moreover, if $G(y,p,Q)>0$, and there exists $\Theta > 0$ such that
$Q -D^2_{yy} s(x,y) \le \Theta I$ for all $x \in X_2(y,p,Q)$, then 
%$G$ %is strictly elliptic at $(y,p,Q)$ and its 
the ellipticity constant of $G$ at $(y,p,Q)$ is given by $\lambda = G(y,p,Q)/\Theta$.
% at $(y,p,Q)$.
%$$
%\frac{G(y,p,Q')}{G(y,p,Q)} =  1 + \tr(Q'-Q) + O(\|\frac {Q'-Q}{\theta}\|^2) 
%$$
%Indeed,  \eqref{locally uniform ellipticity} holds with $\lambda = G(y,p,Q)/\Theta$, and the Lipschitz constant of
%$G(y,p,\cdot)$ approaches $G(y,p,Q)/\theta$ near $Q$.
\end{lemma}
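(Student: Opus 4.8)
The plan is to work directly from the integral formula \eqref{G_i} with $i=2$, treating separately the two ways in which replacing $Q$ by $Q' \ge Q$ changes the quantity $G_2(y,p,Q)$: through the integrand $\det[Q - D^2_{yy}s(x,y)]$ and through the domain of integration $X_2(y,p,Q) = \{x \in X_1(y,p) \mid D^2_{yy}s(x,y) \le Q\}$. The first observation is that $Q' \ge Q$ forces $X_2(y,p,Q) \subseteq X_2(y,p,Q')$, so the domain only grows. On the enlarged domain the integrand is non-negative, and on the common domain $X_2(y,p,Q)$ we have $Q' - D^2_{yy}s(x,y) \ge Q - D^2_{yy}s(x,y) \ge 0$, both matrices symmetric positive semi-definite, so monotonicity of the determinant on positive semi-definite matrices gives $\det[Q' - D^2_{yy}s(x,y)] \ge \det[Q - D^2_{yy}s(x,y)]$ pointwise. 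Adding the non-negative contribution from $X_2(y,p,Q') \setminus X_2(y,p,Q)$ yields $G_2(y,p,Q') \ge G_2(y,p,Q)$, which is degenerate ellipticity.

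For strict ellipticity I would keep only the contribution from the common domain $X_2(y,p,Q)$ (discarding the non-negative remainder) and estimate the pointwise determinant gap from below. Write $A := Q - D^2_{yy}s(x,y) \ge 0$ and $B := Q' - Q \ge 0$, both symmetric, with $A \le \Theta I$ on $X_2(y,p,Q)$ by hypothesis. The key elementary inequality is $\det(A+B) - \det A \ge \det A \cdot \tr(A^{-1}B)$ when $A$ is invertible, and more robustly (allowing $A$ to be singular) one has $\det(A+B) \ge \det A + \det A \cdot \tr(A^{-1}B)$ replaced by the cleaner bound I actually want: since $\det(A+B) \ge \det A + \mathrm{(cofactor\ terms)}$ and all cofactors of $A$ are controlled, the cleanest route is to diagonalize $A$ and use $\det(A+B) - \det A \ge \bigl(\prod_{i} a_i / \Theta^{?}\bigr)\cdots$ — but the sharp constant claimed, $\lambda = G_2(y,p,Q)/\Theta$, suggests instead integrating a one-parameter family: for $B \ge 0$, $\frac{d}{dt}\det(A+tB) = \det(A+tB)\,\tr[(A+tB)^{-1}B] \ge 0$, and since $A + tB \le \Theta I + tB$ is not bounded... so better: use that for $A, B \ge 0$ symmetric $n\times n$, $\det(A+B) - \det A \ge \tr[\mathrm{cof}(A)\,B]$ is false in general; the correct monotone bound is simply $\det(A+B)-\det A \ge \det(A) - \det(A) = 0$ plus a first-order term. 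The honest computation I would carry out: in a basis diagonalizing $A = \mathrm{diag}(a_1,\dots,a_n)$ with $0 \le a_i \le \Theta$, expanding $\det(A+B)$ and discarding all but the terms linear in the diagonal entries $B_{ii}$ gives $\det(A+B) - \det A \ge \sum_{i} B_{ii} \prod_{j \ne i} a_j$; one then needs $\prod_{j\ne i} a_j \ge \det(A)/\Theta$, which holds because $a_i \le \Theta$; hence $\det(A+B) - \det A \ge \frac{\det A}{\Theta}\sum_i B_{ii} = \frac{\det A}{\Theta}\tr B = \frac{\det A}{\Theta}\tr[Q'-Q]$.

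Finally I would integrate this pointwise inequality against the density $f(x)/\sqrt{\det[D^2_{xy}s\,(D^2_{xy}s)^T]}\,d\mathcal H^{m-n}(x)$ over $X_2(y,p,Q)$:
$$
G_2(y,p,Q') - G_2(y,p,Q) \ge \int_{X_2(y,p,Q)} \frac{\det[Q - D^2_{yy}s(x,y)]}{\sqrt{\det D^2_{xy}s\,(D^2_{xy}s)^T}}\,\frac{\tr[Q'-Q]}{\Theta}\, f(x)\, d\mathcal H^{m-n}(x) = \frac{\tr[Q'-Q]}{\Theta}\, G_2(y,p,Q),
$$
which is exactly \eqref{locally uniform ellipticity} with $\lambda = G_2(y,p,Q)/\Theta$, and this is positive precisely because $G_2(y,p,Q) > 0$ is assumed.

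The main obstacle is the pointwise determinant estimate: one must be careful that $A$ may be only positive semi-definite (not strictly positive) at some $x \in X_2$, so the clean identity $\det(A+B)-\det A = \int_0^1 \det(A+tB)\tr[(A+tB)^{-1}B]\,dt$ cannot be used naively; the diagonalization-and-truncation argument above sidesteps this by never inverting $A$, at the cost of a short case check that each $(n-1)$-fold product of eigenvalues of $A$ dominates $\det A/\Theta$. Everything else — domain monotonicity, non-negativity of the discarded remainder, and the final integration — is routine.
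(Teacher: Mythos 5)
Your proposal is correct and takes essentially the same route as the paper: degenerate ellipticity from domain monotonicity plus pointwise monotonicity of the determinant, and strict ellipticity by establishing the pointwise bound $\det[Q'-D^2_{yy}s(x,y)]\ge\det[Q-D^2_{yy}s(x,y)]\bigl(1+\Theta^{-1}\tr[Q'-Q]\bigr)$ on $X_2(y,p,Q)$, discarding the nonnegative contribution of $X_2(y,p,Q')\setminus X_2(y,p,Q)$, and integrating to get $\lambda=G(y,p,Q)/\Theta$. The only minor difference is cosmetic: you prove the pointwise inequality by diagonalizing $A=Q-D^2_{yy}s$ and keeping the linear terms of the principal-minor expansion, while the paper uses $\det[I+A^{-1}(Q'-Q)]\ge 1+\tr[A^{-1}(Q'-Q)]\ge 1+\Theta^{-1}\tr[Q'-Q]$; your variant has the small advantage of not needing $A$ to be invertible, a degenerate case the paper's formula handles only implicitly (both sides vanish when $\det A=0$).
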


\begin{proof}
Fixing $(y,p)\in \bar Y \times \R^m$ and $m \times m$ symmetric matrices $Q' \ge Q$,
%since $f\ge 0$ 
degenerate ellipticity of $G$ follows from the facts that  $f \ge 0$,
$X_2(y,p,Q) \subset X_2(y,p,Q')$,  and 
$Q' - D^2_{yy} s(x,y) \ge Q - D^2_{yy} s(x,y) \ge 0$ for all $x \in X_2(y,p,Q)$. % \subset X_2(y,p,Q')$

Now suppose also
$Q-D^2_{yy} s(x,y) \le \Theta I<\infty$ for all ${x \in X_2(y,p,Q)}$,
so that  
$$
\tr  [(Q-D^2_{yy} s(x,y))^{-1}(Q'-Q)] \ge \Theta^{-1} {\tr [Q'-Q]}  
$$
for all $Q' \ge Q$.
From here we deduce 
%upper and lower bounds for
%\begin{eqnarray*}&& 
$$
\det [I + (Q-D^2_{yy} s)^{-1}(Q'-Q)] \ge 1 + \Theta^{-1} {\tr [Q'-Q]}.
$$
%\\  &&=  1 + \tr  [(Q-D^2_{yy} s)^{-1}(Q'-Q)] + O(\|\frac {Q'-Q}{\theta}\|^2)
%\end{eqnarray*}
%which translate into the local Lipschitz 
  %while if $Q' \ge Q$ then
%$$
%\tr  ((Q-D^2_{yy} s(x,y))^{-1}(Q'-Q)) \ge \Theta^{-1} \tr (Q'-Q). 
%$$
% the uniform ellipticity which 
This can be integrated against $\det[Q-D^2_{yy} s] f  d{\mathcal H}^{m-n}/ \det [D^2_{xy} s D^2_{xy} s^T]$
over $X_2(y,p,Q)$ to find
$$
\frac{G(y,p,Q')}{G(y,p,Q)} \ge 1 + \Theta^{-1} {\tr [Q'-Q]}.
%- O(\|\frac {Q'-Q}{\theta}\|^2) \le 
$$
as desired.
\end{proof}

\begin{theorem}[Bootstrapping regularity using Schauder theory]\label{thm: bootstrapping}
Fix  $0<\alpha<1$ and integer $k\ge 2$.
If $g > \epsilon>0$
 on some smooth domain $Y'$ compactly contained in $Y^0$ where  $v \in C^{k,\alpha}(Y')$, and
$G-g \in C^{k-1,\alpha}$ in a neighbourhood $N$ of the 2-jet of $v$ over $Y'$,  then 
\eqref{local PDE}--\eqref{G_i}
with $i=2$ 
implies $ v \in {C^{k+1,\alpha}(Y')}$.
% is controlled by $\|G-g\|_{C^{k-1,\alpha}(N)} \|v\|^\alpha_{C^{k,\alpha}(Y')}$.??
\end{theorem}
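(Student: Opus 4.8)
The plan is to apply elliptic Schauder theory to the equation \eqref{local PDE}--\eqref{G_i}, treating it as a fully nonlinear second-order PDE $G(y,Dv(y),D^2v(y))=g(y)$ and bootstrapping via differentiation. First I would invoke Lemma \ref{L:strict ellipticity}: on the compact set $\bar Y'$, since $v \in C^{k,\alpha}(Y') \subset C^2(Y')$ the 2-jet $y \mapsto (y,Dv(y),D^2v(y))$ is continuous, and since $g > \epsilon > 0$ on $Y'$ the local PDE forces $G(y,Dv(y),D^2v(y)) = g(y) > \epsilon$ there; moreover on the compact set $X_2(y,Dv(y),D^2v(y))$ the continuous quantity $D^2v(y) - D^2_{yy}s(x,y)$ is bounded above by some $\Theta < \infty$, uniformly in $y \in \bar Y'$ by compactness. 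Hence Lemma \ref{L:strict ellipticity} gives a uniform positive ellipticity constant $\lambda \ge \epsilon/\Theta > 0$ along the solution's 2-jet over $Y'$. Because $G - g \in C^{k-1,\alpha}$ in the neighbourhood $N$ of this 2-jet, we may regard $v$ as a $C^{2,\alpha}$ (indeed $C^{k,\alpha}$) solution of a uniformly elliptic fully nonlinear equation with $C^{k-1,\alpha}$ coefficients, which is precisely the setting of the interior Schauder estimates for fully nonlinear elliptic equations (as in Gilbarg--Trudinger, Chapter 17, or Caffarelli--Cabr\'e).

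Next I would carry out the bootstrap by differentiating the equation. Fix a unit direction $e$ and let $w = \partial_e v$. Differentiating $G(y,Dv(y),D^2v(y)) = g(y)$ in the direction $e$ yields a linear second-order equation for $w$ of the form
\begin{equation*}
a^{ij}(y) \, \partial_{ij} w + b^i(y)\, \partial_i w + c(y)\, w = h(y),
\end{equation*}
where $a^{ij} = G_{Q_{ij}}(y,Dv(y),D^2v(y))$ is the linearized coefficient matrix, $b^i = G_{p_i}$, $c = 0$ (there is no explicit $v$ dependence, only through $Dv$ and $D^2v$), and $h = \partial_e g - G_{y_e}$ evaluated along the 2-jet. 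Degenerate ellipticity of $G$ makes $(a^{ij})$ nonnegative, and the strict ellipticity just established makes it uniformly positive definite on $Y'$; its entries lie in $C^{k-1,\alpha}$ because $G$'s derivatives are $C^{k-1,\alpha}$ on $N$ and $v \in C^{k,\alpha}$ means $(Dv,D^2v) \in C^{k-1,\alpha}$, so the composition $a^{ij}(y)$ is $C^{\min(k-1,k-1),\alpha} = C^{k-1,\alpha}$; similarly $b^i, h \in C^{k-1,\alpha}$. Then interior linear Schauder estimates give $w \in C^{k+1,\alpha}_{loc}(Y')$, hence $v \in C^{k+2,\alpha}$... but we only need one step: with the base hypothesis $k=2$, differentiating once shows $\partial_e v \in C^{2,\alpha}$ with coefficients in $C^{1,\alpha}$, giving $\partial_e v \in C^{3,\alpha}$, i.e.\ $v \in C^{3,\alpha}$; iterating $k-1$ further times (each time the coefficients improve because $v$ does) yields $v \in C^{k+1,\alpha}(Y'')$ on slightly shrunk domains, and a standard covering/chaining argument over compactly nested domains recovers the estimate on all of $Y'$.

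One technical point I would be careful with is the exact regularity class of the coefficients at each stage and the shrinking of domains: interior Schauder estimates are local, so each differentiation step is applied on a domain compactly contained in the previous one, and I would either fix a finite chain $Y' \Subset Y_1 \Subset \cdots \Subset Y^0$ from the start or simply note that since the conclusion $v\in C^{k+1,\alpha}(Y')$ is itself an interior statement one can absorb the shrinkage. A second point: to start the bootstrap one needs $v \in C^{2,\alpha}$ (so that the equation is even classically meaningful and the linearized coefficients are H\"older), which is supplied by the hypothesis $v \in C^{k,\alpha}$ with $k \ge 2$.

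The main obstacle is not any single hard estimate but rather the verification that the uniform ellipticity constant $\lambda$ stays bounded below along the \emph{entire} bootstrap, i.e.\ that $\Theta$ remains finite and $G>\epsilon'$ persists as we work on the nested subdomains --- but this follows because $\lambda = G/\Theta$ from Lemma \ref{L:strict ellipticity} depends only on the 2-jet of $v$ (which is already controlled in $C^{k,\alpha}$ from the start and only gets better) and on the continuity of $D^2v - D^2_{yy}s$ over the compact set $\bar Y'$ and the fibers $X_2$, none of which degrades. With $\lambda$ uniformly positive, the standard Schauder machinery for fully nonlinear uniformly elliptic equations --- linearize, differentiate, apply linear Schauder, iterate --- closes the argument.
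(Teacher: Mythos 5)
Your overall strategy (linearize, get uniform ellipticity from Lemma \ref{L:strict ellipticity}, apply linear Schauder, iterate on nested domains) is the same as the paper's, and your ellipticity bound $\lambda \ge \epsilon/\Theta$ matches the paper's bound $(a^{ij}) \ge \epsilon I/\|v\|_{C^2(Y')}$. But there are two genuine gaps. First, the base case $k=2$: there $w=\partial_e v$ is only $C^{1,\alpha}$, so you are not entitled to "differentiate $G(y,Dv,D^2v)=g$ in the direction $e$" and treat the resulting equation classically --- the equation for $w$ involves $D^2w$, i.e.\ third derivatives of $v$, whose existence is exactly what must be proved. Your sentence "differentiating once shows $\partial_e v\in C^{2,\alpha}$" asserts the conclusion rather than deriving it. The paper closes this gap by applying a finite-difference quotient $\Delta^h_k$ to \eqref{local PDE}, using the mean value theorem to exhibit $w_h=\Delta^h_k v$ as an exact solution of a linear equation with measurable coefficients converging uniformly as $h\to 0$, and then invoking the stability theorem for viscosity solutions (Lemma 6.1 and Remark 6.3 of Crandall--Ishii--Lions) to conclude that $w$ is a viscosity solution of the limiting linear equation \eqref{linear elliptic inhomogeneous}, to which the linear theory (Gilbarg--Trudinger Theorems 6.13/6.17 plus viscosity uniqueness) can be applied. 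Some such weak/viscosity interpretation is unavoidable at $k=2$, and your proposal contains no substitute for it.

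Second, your coefficient-regularity count is off by one derivative and ignores the loss of H\"older exponent under composition. The coefficients $a^{ij}=\partial G/\partial Q_{ij}$ and $b^i=\partial G/\partial p_i$ are \emph{first} derivatives of $G\in C^{k-1,\alpha}$, hence only $C^{k-2,\alpha}$ on $N$; composing with the 2-jet of $v$, which is $C^{k-2,\alpha}$, yields coefficients in $C^{k-2,\alpha^2}$ (not $C^{k-1,\alpha}$ as you claim). Linear Schauder with $C^{k-2,\cdot}$ coefficients then gives $w\in C^{k,\cdot}$, i.e.\ a gain of exactly one derivative for $v$ --- your intermediate claim $w\in C^{k+1,\alpha}$, hence $v\in C^{k+2,\alpha}$, does not follow from the hypotheses and is false in general. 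Moreover, because of the exponent degradation $\alpha\mapsto\alpha^2$, a single pass only yields $v\in C^{k+1,\alpha^2}_{loc}$; the paper recovers the exponent $\alpha$ by a second pass, using the improved $C^{k,1}$ control of $v$ obtained in the first pass to upgrade the coefficients to $C^{k-2,\alpha}$, before restarting from a slightly larger domain to capture all of $Y'$. Your proposal would need both of these repairs to be complete.
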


\begin{proof}
Since $v \in C^{2,\alpha}(Y')$, \eqref{local PDE} holds in the classical sense.  If $k \ge 3$,
we can differentiate the equation in (say) the $\hat e_k$ direction
to obtain a linear second-order elliptic equation
\begin{equation}\label{linear elliptic inhomogeneous}
a^{ij}(y) D^2_{ij} w + b^i(y) D_{i} w = d(y)
\end{equation}
for $w = \p v / \p y^k$ whose coefficients 
\begin{eqnarray*}
a^{ij}(y) &:=& \frac{\p G }{\p Q_{ij}}\bigg|_{(y,Dv(y),D^2v(y))}  
\\ b^i(y)  &:=& \frac{\p G }{\p q_{i}}\bigg|_{(y,Dv(y),D^2v(y))}
\end{eqnarray*}
and inhomogeneity
$$
d(y) = \frac{\p g }{\p y}\bigg|_{y} 
- \frac{\p G }{\p y}\bigg|_{(y,Dv(y),D^2v(y))} 
$$
have (i) $C^{k-2,\alpha^2}$ norm controlled by $\|G-g\|_{C^{k-1,\alpha}} \|v\|_{C^{k,\alpha}}^\alpha$
and (ii) $C^{k-2,\alpha}$ norm controlled by $\|G-g\|_{C^{k-1,\alpha}} \|v\|_{C^{k,1}}$.  
In case $k=2$, 
we shall argue below that $w \in C^{1,\alpha}$ solves \eqref{linear elliptic inhomogeneous} in the 
{\em viscosity} sense described e.g.\ in \cite{CrandallIshiiLions92}.
From Lemma \ref{L:strict ellipticity} we see the matrix $(a^{ij})$ is bounded below by $\epsilon I / \|v\|_{C^{2}(Y')}$;
it is bounded above by $\|G\|_{C^1(N)}$.  Thus the equation satisfied by $w$ on $Y'$ is uniformly elliptic.
  Since  the coefficient of $w$ vanishes in  \eqref{linear elliptic inhomogeneous},
the Dirichlet problem with continuous boundary data on any ball in $Y'$ is known to admit a unique
(viscosity) solution \cite{CrandallIshiiLions92};  
moreover, this solution is (i) $C_{loc}^{k,\alpha^2}$ 
(by e.g.\ Gilbarg \& Trudinger Theorems 6.13 ($k = 2$) or 6.17 ($k>2$).  
Thus we infer $v \in C^{k+1,\alpha^2}_{loc}(Y')$.  Applying the same argument again starting from the 
improved estimates (ii) now established yields $v \in C^{k+1,\alpha}_{loc}(Y')$.
%$w \in C^{k,\alpha^2}$ (after which Schauder's theorem (e.g. Theorem 13.2.1 of Jost's PDE)
%asserts $\|v\|_{C^{k+1,\alpha^2}}$ is controlled by $\|G\|_{C^{k-1,\alpha}}$ and $\|v\|_{C^{k,\alpha}}$).
At this point we have gained the desired derivative of smoothness for $v$;
starting from a neighbourhood slightly larger than $Y'$ yields $v \in C^{k+1,\alpha}(Y')$.
%
%If $G \in C^{2,\alpha}_{loc}$ then
%the coefficients and inhomogeneity in \eqref{linear elliptic inhomogeneous} satisfy 
%$a^{ij},b^i, \in C^{1,\alpha^3}_{loc}$. The existence theory for the Dirichlet problem (Gilbarg \& Trudinger Lemma 6.18) %then asserts $w \in C^{3, \alpha^3}_{loc}$ and hence $v \in C^{4,\alpha^3}_{loc}$.  The same argument
%yields $v \in C^{k+2,\alpha^{k+1}}_{loc}$ inductively if $G \in C^{k,\alpha}_{loc}$.

In case $k=2$, 
%the differentiation leading to \eqref{linear elliptic inhomogeneous} requires justification. However, 
applying the finite difference operator $\Delta^h_k v(y) := [v(y + h\hat e_k) - v(y)]/h$ to the equation
\eqref{local PDE}, the mean value theorem yields $h^*(y) \in [0,h]$ lower semicontinuous such that
\begin{eqnarray*}
0 %\Delta_k^h g %\frac{\p g}{\p y}\bigg|_{y+ h^*(y) \hat e_k} 
&=& \Delta_k^h [G(y,Dv(y),D^2 v(y))-g(y)]
%\\ &=& \frac{\p (G-g)}{\p y} \bigg|_{y+ h^*(y) \hat e_k}+ \frac{\p G}{\p p_i} \bigg|_{Dv(y)+ h^*(y) \Delta^h_k Dv} 
%D_i \Delta_k^h v +  \frac{\p G}{\p Q_{ij}} \bigg|_{D^2v(y)+ h^*(y) \Delta^h_k D^2 v}  D^2_{ij}\Delta_k^h v
%\end{eqnarray*} 
%\begin{equation}\label{linear elliptic inhomogeneous}
\\ &=&a^{ij}_h(y) D^2_{ij} w_h + b_h^i(y) D_{i} w_h - d_h(y).
\end{eqnarray*}
Here $w_h = \Delta^h_k v$ and the coefficients
\begin{eqnarray*}
a^{ij}_h(y) &:=& \frac{\p G }{\p Q_{ij}}\bigg|_{(I +h^*(y) \Delta_k^h) (y,Dv(y),D^2v(y))}  
\\ b^i_h(y)  &:=& \frac{\p G }{\p q_{i}}\bigg|_{(I + h^*(y)\Delta_k^h)(y,Dv(y),D^2v(y))}
%\end{eqnarray*}
%and inhomogeneity
%$$
\\ d_h(y) &=& \frac{\p g }{\p y}\bigg|_{y+ h^*(y) \hat e_k} 
- \frac{\p G }{\p y}\bigg|_{(I + h^*(y)\Delta^h_k)(y,Dv(y),D^2v(y))}.
%$$
\end{eqnarray*}
are measurable and converge uniformly to $(a^{ij},b^i,d)$ as $h\to 0$.
The solutions $w_h=\Delta_k^h v \in C^{2,\alpha}$, being finite differences, 
converge to $\p v/\p y_k$ in $C^{1,\alpha}(Y')$.
Lemma 6.1 and Remark 6.3 of \cite{CrandallIshiiLions92} show this partial 
derivative $w=\p v/\p y_k$ must then be the required 
viscosity solution of the limiting equation \eqref{linear elliptic inhomogeneous}. % as desired.
%
% then show the limit $w=$\p v/\p y_k$ is a visco
%bounded for small $h$.  For $h$ sufficiently small, the family of matrices $(a^{ij}_h)$ satisfies the same lower %bound $\epsilon / \|v\|_{C^{2}(Y')}$ as before.  
%Krylov \cite{} then yields a H\"older estimate for $w_h = \Delta_k^h v$ independent of $h << 1$.
%The limit $h \to 0$ shows the partial derivatives of $D^2 v$ exist,
%so we have a classical solution to \eqref{linear elliptic inhomogeneous} in this case also and 
%so the preceding argument applies.
\end{proof}

%\marginpar{Brendan?}

%It remains to show $C^{k-1,\alpha}_{loc}$ smoothness of $G$.  This presumably requires uniform $s$-convexity,
%i.e. a lower bound complementary to that assumed in Lemma \ref{L:strict ellipticity}:
%$0< \theta I \le D^2 v(y) - D^2_{yy} s$ for all $y \in Y'$ and $x \in X_2(y,Dv(y),D^2 v(y))$.
%It may also be easier when $G_1=G_2$.
%Based on our multi-to-one paper,  for $k \ge 1$ it is natural to conjecture $G\in {C^{k,\alpha}}$ if $s \in C^{k+2,\alpha}$,
%$\log f,\log g \in C^{k,\alpha}$, $\p X \in C^{k,\alpha} \cap C^{1,1}$,  and $X_1(y, Dv(y))$ intersects $\p X$
%non-transversally.

%RMK: I think $G_1$ is not generally elliptic, even on $s$-convex functions, except when $G_1=G_2$.
 Notice $G_2$ is degenerate elliptic even when evaluated on functions which are not $s$-convex. 

\section{On smoothness of  the nonlinear operators $G_i$}\label{sect: smoothness of $G_i$}
\label{S:G1=G2}

The preceding section illustrates how one can bootstrap from $v \in C^{2,\alpha}$ to higher regularity, assuming smoothness of the  nonlinear elliptic operator $G_2$.  We now turn our attention to  verifying the assumed smoothness of $G_2$ under the simplifying hypothesis that $G_2=G_1$.  
Our main result is Theorem~\ref{thm: smoothness of G}.
 Though the assumed smoothness of $v$ is addressed for $(n,i)=(1,2)$  in Section \ref{S:ODE},  we consider neither the smoothness nor the uniform convexity 
of $v$ for higher dimensional targets, which as we have noted, remain interesting open questions.

Our joint work with Chiappori \cite{ChiapporiMcCannPass17} establishes regularity of $G_1$ when $n=1$; in this section, we focus on this smoothness for higher dimensional targets.  We note that connectedness of almost every level set $X_2(y,Dv(y),D^2v(y))$, plus the $C^2$-smoothness of $v$ hypothesized in Theorem \ref{thm: bootstrapping}  of the last section, and $C^2$-smoothness of $u=v^s$, implies that $G_1=G_2$ by Theorem \ref{thm: smoothness implies nestedness},   so in many cases of interest
it is enough to address smoothness of $G_1$.
Note however that when  $G_1(y,Dv(y),D^2v(y)) \neq G_2(y,Dv(y),D^2v(y))$, as can happen, for instance, when the $X_2(y,Dv(y),D^2v(y))$ are disconnected, the results in this section %do not apply to 
by themselves yield little information about $G_2$.  
%This case is addressed in the next section, for uni-dimensional targets $n=1$.

%{\bf Higher dimensional analog of nestedness}

We let  $Y', P' \subseteq \mathbb{R}^n$ be bounded open sets and set $X'=\cup_{(y,p) \in Y' \times P'}X_1(y,p)$.  For technical reasons it is convenient to assume that $y \mapsto s(x,y)$ is uniformly convex
throughout this section; that is, $D^2_{yy}s(x,y) \geq CI>0$ throughout $X' \times Y'$.   Note that this assumption can always be achieved by adding a sufficiently convex function of $y$ to $s$.
%\begin{itemize}
%\item Nondegeneracy: $D^2_{xy}s$ has full rank.
%\item Transversality: each $X_1(y,p)$ intersects $\partial X$ transversally.
%\item For each $x$, $s \mapsto s(x,y)$ is uniformly convex.
%\end{itemize}

\begin{theorem}[Smoothness of $G_1$]\label{thm: smoothness of G}
Let $r \geq 1$.  Then $||G_1||_{C^{r,1}(Y' \times P')}$ is controlled by $||f||_{C^{r,1}(X')}$, $||D_y s||_{C^{r+1,1}(Y' \times X')}$, $||\hat n_ X||_{ C^{r-1,1}}$ and
\begin{eqnarray}
\inf_{(x,y) \in X' \times Y'} \min_{v \in \mathbb{R}^n, |v| =1}| D^2_{xy}s(x,y)\cdot v| &\quad& \mbox{\rm (non-degeneracy),}
\\ \inf_{(x,y,p) \in (\partial X\cap \bar X') \times Y' \times P'} |(\hat n_X)_{T_xX_1(y,p)}| && {\rm (transversality),}
\\ \sup_{(y,p) \in Y' \times P'} \mathcal H^{m-n}(X_1(y,p)) && \mbox{\rm (size of level sets), and of}
\\ \sup_{(y,p) \in Y' \times P'} \mathcal H^{m-n-1}(\overline{X_1(y,p)} \cap \partial X) && 
\mbox{\rm (boundary intersections),}
\end{eqnarray}
  assuming finiteness and positivity of each quantity above.
Here $(\hat n_X)_{T_xX_1(y,p)}$ denotes the projection of the outward unit normal $\hat n_X$ to $X$ onto the tangent space $T_xX_1(y,p)$.
\end{theorem}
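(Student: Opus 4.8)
The plan is to prove the theorem by viewing $G_1(y,p,Q)$ as an integral over the $(m-n)$-dimensional manifold $X_1(y,p)$ of a smooth integrand, parametrizing $X_1(y,p)$ locally in a way that depends smoothly on $(y,p)$, and then differentiating under the integral sign. Concretely, I would fix $(y_0,p_0)\in Y'\times P'$ and a point $x_0\in X_1(y_0,p_0)$. Since $D^2_{xy}s$ has full rank $n$ (the non-degeneracy hypothesis, quantitatively), the map $x\mapsto D_ys(x,y)$ is a submersion onto $\R^n$ near $x_0$ for each fixed $y$; by the implicit function theorem I can choose $(m-n)$ of the coordinates of $x$, say $z\in\R^{m-n}$, as local parameters and solve $D_ys(x,y)=p$ for the remaining $n$ coordinates as a $C^{r+1,1}$ function $\xi(z,y,p)$ of the parameters and the data. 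The regularity $C^{r+1,1}$ of $\xi$ comes from the regularity $C^{r+1,1}$ of $D_ys$ combined with the inverse function theorem, with quantitative bounds governed by the non-degeneracy constant. The surface measure $d\mathcal H^{m-n}$ on $X_1(y,p)$ pulls back to $J(z,y,p)\,dz$ where $J$ is the square-root of the Gram determinant of the parametrization, hence $C^{r,1}$ in all variables. The integrand
$$
\frac{\det[Q-D^2_{yy}s(x,y)]}{\sqrt{\det D^2_{xy}s(x,y)\,(D^2_{xy}s(x,y))^T}}\,f(x)
$$
is then, after substituting $x=(z,\xi(z,y,p))$, a $C^{r,1}$ function of $(z,y,p,Q)$, using $f\in C^{r,1}$, $D^2_{yy}s$ and $D^2_{xy}s$ being as smooth as $D_ys$ allows (one derivative more), and the denominator being bounded away from zero by non-degeneracy. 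Differentiating $G_1$ up to order $r$ in $(y,p)$ and $Q$ then just moves derivatives onto these $C^{r,1}$ ingredients, yielding the claimed control, provided the domain of integration is handled.

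The domain of integration is the genuine subtlety, and this is where the transversality and boundary-intersection hypotheses enter. A single local chart $\xi(\cdot,y,p)$ covers only a piece of $X_1(y,p)$; I would cover a neighborhood of $\overline{X_1(y_0,p_0)}$ in $X'$ by finitely many such charts together with a partition of unity subordinate to them, and crucially the number of charts and the partition of unity can be chosen uniformly for $(y,p)$ in a neighborhood of $(y_0,p_0)$ because $X_1(y,p)$ varies continuously (indeed $C^{r+1,1}$) with $(y,p)$ and $\sup\mathcal H^{m-n}(X_1(y,p))<\infty$ bounds its size. Within the interior of $X'$ the parameter domain for each chart is an open subset of $\R^{m-n}$ that I can take fixed (independent of $(y,p)$) by shrinking, so no boundary terms arise there. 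The genuine boundary contributions come from where $\overline{X_1(y,p)}$ meets $\partial X$: here the transversality hypothesis, namely that the outward normal $\hat n_X$ has a uniformly nonvanishing projection onto $T_xX_1(y,p)$, guarantees that $X_1(y,p)\cap\partial X$ is a $C^{r-1,1}$ submanifold of dimension $m-n-1$ varying $C^{r-1,1}$-smoothly with $(y,p)$ (this is essentially an implicit function theorem computation using that $\partial X$ is a $C^{r-1,1}$ hypersurface with normal $\hat n_X\in C^{r-1,1}$), and the boundary-intersection bound $\sup\mathcal H^{m-n-1}(\overline{X_1(y,p)}\cap\partial X)<\infty$ controls its size. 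Then the Leibniz/Reynolds transport formula for differentiating an integral over a moving domain produces, in addition to the interior term, a boundary integral over $X_1(y,p)\cap\partial X$ whose integrand involves the normal velocity of the boundary as $(y,p)$ varies — and that velocity is controlled in $C^{r-1,1}$ precisely by the transversality constant and the regularity of $\hat n_X$ and of the chart maps $\xi$.

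I would organize the write-up as: (1) set up the local charts and record their $C^{r+1,1}$ dependence on $(y,p)$ with quantitative bounds from non-degeneracy; (2) compute the Gram determinant and check the integrand is $C^{r,1}$ in $(z,y,p,Q)$; (3) build the uniform finite cover and partition of unity, separating interior pieces (fixed parameter domains, no boundary terms) from boundary-touching pieces; (4) near $\partial X$, use transversality to straighten $\partial X\cap X_1(y,p)$ and apply the transport theorem, bounding the resulting boundary integral using the size hypothesis $\sup\mathcal H^{m-n-1}(\overline{X_1(y,p)}\cap\partial X)<\infty$ and the regularity of $\hat n_X$; (5) assemble the estimates, iterating $r$ times, each differentiation lowering the available smoothness by one in the expected way and landing at $C^{r,1}$ for $G_1$. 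The main obstacle I anticipate is step (4): making rigorous and quantitative the assertion that the boundary stratum $\overline{X_1(y,p)}\cap\partial X$ depends smoothly enough on $(y,p)$ that its contribution to derivatives of $G_1$ is controlled — in particular verifying that transversality is exactly the condition that prevents this stratum from developing singularities or jumping in dimension as $(y,p)$ moves, and tracking how many derivatives are lost (one expects $C^{r-1,1}$ control on the boundary data, consistent with the stated hypothesis $\|\hat n_X\|_{C^{r-1,1}}$, which then feeds through the transport formula without further loss). A secondary but routine difficulty is bookkeeping the chain rule through the composition $x=(z,\xi(z,y,p))$ to confirm no hidden loss of derivatives beyond what the hypotheses allow.
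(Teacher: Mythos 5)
Your route is genuinely different from the paper's, and it is viable in outline. The paper never parametrizes the level sets: it fixes one constraint index $i$, uses the generalized divergence theorem on the intermediate submanifold $X^i(y,p)$ (where the other $n-1$ constraints are frozen) to rewrite $\int_{X_1(y,p)}a\,d\mathcal H^{m-n}$ as a sublevel-set integral plus boundary terms, differentiates in $p_i$ via the one-parameter flux formula (Lemma 5.1 of \cite{ChiapporiMcCannPass17}), and handles $y$-derivatives by passing to the Legendre transform $s^*(x,p)$ so that the same machinery applies with the roles of $y$ and $p$ exchanged; higher derivatives are then obtained by iterating explicit operators $A_{p_i},A_{y_i}$ acting on pairs (interior integrand, boundary integrand) in the spaces $B_k$. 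You instead solve $D_ys(x,y)=p$ locally by the implicit function theorem, pull back $d\mathcal H^{m-n}$ through the charts, use a partition of unity, and differentiate under the integral sign, invoking a Reynolds-type transport formula only where $X_1(y,p)$ meets $\partial X$. What your approach buys: $y$- and $p$-derivatives are treated on the same footing (no Legendre transform needed), and the interior analysis is elementary. What the paper's approach buys: it is chart-free and global, so it needs no compactness or uniform finite atlas (only the stated measure bounds), and its operator formalism makes the recursion transparent.

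Two places where your sketch is thin and would need the same care the paper expends. First, iteration: after one differentiation you acquire integrals over the moving stratum $\overline{X_1(y,p)}\cap\partial X$, and the remaining $r-1$ derivatives must be applied to \emph{those} as well, via a transport formula inside $\partial X$ whose nondegeneracy comes from combining non-degeneracy with transversality; this interior/boundary mixing is exactly what the paper's operators $A_{p_i},A_{y_i}$ encode, and it is also where the precise count $\|\hat n_X\|_{C^{r-1,1}}$ is earned (the first derivative's boundary term involves $\hat n_X$ undifferentiated because the starting datum has no boundary component, and each subsequent differentiation spends at most one derivative on $\hat n_X$) --- your ``feeds through without further loss'' needs this bookkeeping to be made explicit. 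Second, your uniform finite cover presupposes compactness of $\overline{X_1(y_0,p_0)}$ and uniformity over all of $Y'\times P'$; you would need to extract uniform chart scales from the quantitative non-degeneracy and $\|D_ys\|_{C^{1,1}}$ bounds (and control the derivatives of the cutoffs they generate), and to justify the final differentiation of merely Lipschitz integrands by approximation and dominated convergence, as the paper does. None of these is a fatal obstruction, but they are the substance of the proof rather than routine details.
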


Before proving the result, we develop some notation and establish a few preliminary lemmas.

For $i \in \{1,\ldots,n\}$,
the set $X^{i}_{\leq }(y, p): = \{ x \mid s_{y_i} \leq p_i, s_{y_j} =p_j \forall j \neq i\}$
is a submanifold of $X$ whose relative boundary is given by $X_1(y,p)$.
Then $X^{i}_{\leq }(y, p) \subseteq X^{i}(y, p) : =\{ x \mid  s_{y_j} =p_j \forall j \neq i\}$, while with an analogous definition $X^{i}_{= }(y, p) $ coincides with $X_1(y,p)$.
%\marginpar{RM: delete bf?}
%\marginpar{BP: See response on the next page...}

  Nondegeneracy  of $s$ 
  makes $X_1(y,p)$ a codimension one submanifold of the codimension $n-1$ submanifold $X^{i}(y, p)$ of $X$.  By the implicit function theorem, these submanifolds are each one derivative less smooth than $s$.

\begin{lemma}[Submanifold transversality]\label{lemma: transversality of submanifolds}
The submanifolds $\partial X^i = \bar X^i \cap \partial X$ and $X_1$ intersect transversally in $X^i$.
\end{lemma}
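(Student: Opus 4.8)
The plan is to verify transversality of $\partial X^i$ and $X_1$ inside the ambient manifold $X^i(y,p)$ by a dimension count together with an explicit identification of the relevant tangent spaces. Recall $X^i(y,p) = \{x : s_{y_j}(x,y) = p_j \text{ for all } j \neq i\}$ is a codimension $n-1$ submanifold of $X \subset \R^m$ (one derivative less smooth than $s$, by the implicit function theorem, using non-degeneracy to guarantee the $n-1$ functions $x \mapsto s_{y_j}(x,y)$ have independent differentials). Inside $X^i(y,p)$ we have two codimension-one submanifolds: $X_1(y,p) = \{x \in X^i(y,p) : s_{y_i}(x,y) = p_i\}$, cut out by the single equation $s_{y_i} = p_i$, and $\partial X^i = \bar X^i \cap \partial X$, which is the intersection of $X^i(y,p)$ with the boundary hypersurface $\partial X$ of the open set $X$. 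For transversality in $X^i(y,p)$ at a common point $x_0$, it suffices to show $T_{x_0}X_1 + T_{x_0}\partial X^i = T_{x_0}X^i(y,p)$, equivalently (since both summands have codimension one in the $(m-n+1)$-dimensional space $T_{x_0}X^i(y,p)$) that the two tangent hyperplanes are distinct, i.e. that $T_{x_0}X_1 \not\subseteq T_{x_0}\partial X^i$, or symmetrically that neither contains the other.

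First I would write down the tangent spaces. Inside $V := T_{x_0}X^i(y,p) = \bigcap_{j\neq i} \ker D_x s_{y_j}(x_0,y)$, the tangent space $T_{x_0}X_1$ is the kernel of the linear functional $w \mapsto D_x s_{y_i}(x_0,y)\cdot w$ restricted to $V$; non-degeneracy of $s$ guarantees this functional is nonzero on $V$ (the full-rank condition on $D^2_{xy}s$ says $D_x s_{y_1},\ldots,D_x s_{y_n}$ are linearly independent, so $D_x s_{y_i}$ does not vanish on the joint kernel of the others), so $T_{x_0}X_1$ is indeed a hyperplane in $V$. On the other hand, $T_{x_0}\partial X^i = V \cap T_{x_0}\partial X = V \cap \hat n_X(x_0)^\perp$, a hyperplane in $V$ provided $\hat n_X(x_0)$ is not orthogonal to all of $V$, i.e. provided the projection $(\hat n_X)_{T_{x_0}X^i(y,p)}$ of the outward unit normal onto $V$ is nonzero. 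This is precisely the transversality hypothesis built into Theorem~\ref{thm: smoothness of G} (there stated for $T_xX_1(y,p)$; the analogous statement for $T_xX^i$ is what one actually needs here, and in fact $T_xX_1 \subset T_xX^i$ so nonvanishing of the projection onto the larger space is the weaker requirement --- I would state the lemma under the hypothesis that $(\hat n_X)_{T_{x_0}X^i(y,p)} \neq 0$, noting this is implied by the transversality assumption since $\partial X \cap \bar X'$ meets $X_1$ exactly where the stronger projection is controlled).

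The two hyperplanes $T_{x_0}X_1 = \ker(D_x s_{y_i}|_V)$ and $T_{x_0}\partial X^i = \ker(\hat n_X(x_0)\cdot\,|_V)$ coincide if and only if the two functionals on $V$ are proportional, i.e. if and only if $\hat n_X(x_0)$ and $D_x s_{y_i}(x_0,y)$ have the same orthogonal complement within $V$. I would argue this cannot happen at a point $x_0 \in X_1(y,p) \cap \partial X$: if it did, then $\hat n_X(x_0)$ restricted to $V$ would annihilate $T_{x_0}X_1$, which would force $\partial X$ to be tangent to $X_1$ along $V$ — but this is exactly the degenerate configuration the transversality hypothesis of Theorem~\ref{thm: smoothness of G} rules out, since that hypothesis asserts a uniform positive lower bound on $|(\hat n_X)_{T_xX_1(y,p)}|$ over $(\partial X \cap \bar X')\times Y'\times P'$, i.e. the outward normal has a uniformly nonzero component \emph{inside} $T_xX_1$, whence $\hat n_X(x_0)$ does \emph{not} annihilate $T_{x_0}X_1$. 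Therefore the functionals are not proportional, the hyperplanes are distinct, and $T_{x_0}X_1 + T_{x_0}\partial X^i = V$, which is the asserted transversality.

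The main obstacle I anticipate is bookkeeping about \emph{which} tangent space the normal is projected onto and ensuring consistency with the hypothesis as phrased in Theorem~\ref{thm: smoothness of G} (projection onto $T_xX_1$, not onto $T_xX^i$), together with checking that non-degeneracy of $s$ genuinely delivers that $D_x s_{y_i}$ is nonzero on $V$ so that $X_1$ is a genuine hyperplane inside $X^i$ rather than accidentally all of it. Both points are routine once the linear algebra is set up carefully, and no hard analysis is involved; the content is entirely the observation that "the outward normal has a nonzero component along $X_1$" is the linear-algebraic restatement of "$\partial X$ and $X_1$ meet transversally in $X^i$."
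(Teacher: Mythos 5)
Your proposal is correct and takes essentially the same route as the paper: both proofs reduce the claim to linear algebra from the standing transversality hypothesis --- nonvanishing of $(\hat n_X)_{T_xX_1(y,p)}$, equivalently $T_xX_1 + T_x\partial X = \R^m$ --- combined with the inclusion $T_xX_1 \subseteq T_xX^i$ and non-degeneracy of $s$ (which makes $X_1$ a hypersurface of $X^i$). The only difference is cosmetic: the paper decomposes an arbitrary vector of $T_xX^i$ as $p_1+p_\partial$ via $\R^m = T_x\partial X + T_xX_1$ and checks $p_\partial \in T_x\partial X \cap T_xX^i$, whereas you note that $T_xX_1$ and $T_x\partial X^i = T_xX^i \cap T_x\partial X$ are two hyperplanes of $T_xX^i$ which must be distinct (else $\hat n_X \perp T_xX_1$, contradicting the hypothesis) and hence sum to $T_xX^i$.
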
%\marginpar{BP: ...We do use this notation, although we could just use $X_1(y,p)$ instead.  I'd introduced it to align with the CPAM paper; I'm not sure if it makes it easier or harder to read.}
\begin{proof}The proof is straightforward linear algebra.
Transversal intersection of $X_1$ and $\partial X$ in $\mathbb{R}^m$ easily implies transversal intersection of $X^i$ and $\partial X$, and so $T_x(\partial X^i) =T_x(\partial X) \cap T_x(X^i)$ at each point of intersection $x \in  \bar X^i \cap \partial X$.  We then need to show

$$
[T_x(\partial X) \cap T_x(X^i)] + T_xX_1 =T_xX^i.
$$ 

The containment $[T_x(\partial X) \cap T_x(X^i)] + T_xX_1 \subseteq T_xX^i$ is immediate, as each of the summands is contained in $T_xX^i$.  On the other hand, if $p \in T_xX^i \subset \mathbb{R}^m =T_x(\partial X) + T_xX^1$ (by transversality), we write $p=p_1+p_\partial$, with $p_1 \in   T_xX^1 \subseteq T_xX^i$ and $p_\partial \in T_x(\partial X)$.  But then $p_\partial =p-p_1 \in T_xX^i$, and so $p_\partial \in  [T_x(\partial X) \cap T_x(X^i)]$, implying the containment $T_xX^i \subseteq [T_x(\partial X) \cap T_x(X^i)] + T_xX_1 $.
\end{proof}

 Given $f \in L^\infty$,
Lemma 5.1 of \cite{ChiapporiMcCannPass17} implies that
% (NEED ASSUMPTIONS HERE, EG NONTRANVERSALITY) 
$$\Phi^i(y,p):=\int_{X^{i}_{\leq }(y, p)}f(x,y)d\mathcal H ^{m-n+1}(x)$$ 
has a Lipschitz dependence on $p$, with
\begin{equation}\label{eqn: flux differentiation}
\frac{\partial \Phi^i}{\partial p_i}(y,p) =\int_{X^{i}_ =(y, p)}\frac{f(x,y)}{|D_{X^i}s_{y_i}|}d\mathcal H ^{m-n}(x)
\qquad {\rm [a.e.]},
\end{equation}
where $D_{X^i}s_{y_i}$ is the differential of $s_{y_i}$ along the submanifold $X^i$, nonzero by the nondegeneracy assumption:

%\marginpar{\re RM: How is $v$ related to $T{X^i}$ in this lemma? Specify} 
%\marginpar{{\blu BP: $v$ is any vector in in $TY =\mathbf{R}^n$ (see updates to the proof and statement).  I don't think %it has any relationship to $TX^i \subseteq \mathbf{R}^m$.}}

\begin{lemma}[Restriction non-degeneracy]\label{lemma: nondegeneracy on submanifolds}
The differential $D_{X^i}s_{y_i}$ of $s_{y_i}$ along the manifold $X^i$ satisfies 
$$
|D_{X^i}s_{y_i}| \geq \min_{v \in \mathbf{R}^n,\text{ }|v| =1}| D^2_{xy}s\cdot v|.
$$

%One can also use this to express $D_{X^{*i}}s^*_{p_i}$ in terms of $x$ and $y$.
\end{lemma}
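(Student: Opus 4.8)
The plan is to unwind the definitions so that the quantity $D_{X^i}s_{y_i}$ becomes the norm of an orthogonal projection of the full gradient $D_x s_{y_i} \in \mathbb{R}^m$, and then to observe that this full gradient is one of the columns of the matrix $D^2_{xy}s$, which by hypothesis has all its columns (indeed all unit-vector images) bounded below. First I would recall that $X^i(y,p) = \{x \mid s_{y_j}(x,y) = p_j \ \forall j \ne i\}$ is a codimension $n-1$ submanifold of $X$ (away from its boundary), with tangent space $T_x X^i = \bigcap_{j \ne i} \ker D_x s_{y_j}(x,y) = \big(\mathrm{span}\{D_x s_{y_j} : j \ne i\}\big)^{\perp}$, using nondegeneracy (full rank of $D^2_{xy}s$) to guarantee the $n-1$ vectors $D_x s_{y_j}$, $j\ne i$, are linearly independent so that the codimension is exactly $n-1$. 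The differential of $s_{y_i}$ along $X^i$ is by definition the restriction of the linear functional $D_x s_{y_i}$ to $T_x X^i$; identifying functionals with vectors via the Euclidean inner product, $|D_{X^i}s_{y_i}|$ equals the norm of the orthogonal projection $\pi_{T_xX^i}(D_x s_{y_i})$ of the vector $D_x s_{y_i}$ onto $T_x X^i$.

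Next I would estimate this projection from below. Write $w := D_x s_{y_i}(x,y) \in \mathbb{R}^m$; this is precisely the $i$-th column (or row) of $D^2_{xy}s(x,y)$, i.e. $w = D^2_{xy}s(x,y)\cdot \hat e_i$ where $\hat e_i$ is the $i$-th standard basis vector of $\mathbb{R}^n$. The key point is that $\pi_{T_xX^i}(w)$ is obtained from $w$ by subtracting its components along $D_x s_{y_j}$, $j \ne i$; but here is where I need to be slightly careful, since naively projecting off non-orthonormal directions need not shrink the norm. The clean way: the full family $\{D_x s_{y_1},\dots,D_x s_{y_n}\}$ spans an $n$-dimensional subspace $V = \mathrm{span}\{D_x s_{y_j}: j\}$, and $T_xX^i \supseteq V^{\perp}$; moreover $T_xX^i = V^\perp \oplus \mathbb{R}\,\pi_{V}(w)^{\perp\text{-part}}$... rather than fuss with this, I would instead argue directly that for \emph{any} subspace $T$ of the form $T_xX^i$ and any vector $v$ with $|v|=1$, the image $D^2_{xy}s\cdot v$ has $\|\pi_{T_xX^i}\|$-controlled...

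Actually the cleanest route, and the one I expect the authors take: pick the unit vector $v \in \mathbb{R}^n$ realizing the worst case and relate it to $\hat e_i$ — no. The genuinely robust step is: since $T_x X^i$ contains $V^\perp$ and $w \in V$, one has $|\pi_{T_xX^i}(w)| \ge |\pi_{T_xX^i \cap V}(w)|$ is unhelpful; instead use that $T_x X^i$ has codimension $n-1$ \emph{inside} $\mathbb{R}^m$ and is the kernel of the surjective linear map $L: \mathbb{R}^m \to \mathbb{R}^{n-1}$, $Lx = (D_xs_{y_j}\cdot x)_{j\ne i}$, so $|\pi_{T_xX^i}(w)| = \mathrm{dist}(w, (\ker L)^\perp)$... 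The main obstacle is precisely pinning down this elementary-but-fiddly linear-algebra inequality: showing that projecting $w = D^2_{xy}s \cdot \hat e_i$ onto $T_x X^i$ cannot reduce its length below $\min_{|v|=1}|D^2_{xy}s\cdot v|$. I expect this follows by noting $\pi_{T_xX^i}(w)$ itself has the form $D^2_{xy}s \cdot v'$ for a suitable $v' \in \mathbb{R}^n$ — because $T_xX^i$ is cut out by conditions involving exactly the columns of $D^2_{xy}s$, the projection of one column onto the common kernel of the others stays in the column space, equal to $D^2_{xy}s\cdot v'$ where $v'$ solves a small linear system — and then $|v'| \le 1$ is forced (the projection does not increase the relevant coefficient because $\hat e_i$ appears with unit coefficient and the correction terms are $L^2$-orthogonal in the appropriate pairing), so $|D_{X^i}s_{y_i}| = |D^2_{xy}s\cdot v'| \ge |v'|\min_{|v|=1}|D^2_{xy}s\cdot v| $, and combined with $|v'|\ge$ (component along $\hat e_i$) $=1$ after normalization... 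I would present it as: $\pi_{T_xX^i}(D_xs_{y_i}) = D^2_{xy}s\cdot v'$ for some $v'\in\mathbb{R}^n$ whose $i$-th component equals $1$ and which is $D^2_{xy}s^T D^2_{xy}s$-orthogonal to the correction, hence $|v'|\ge 1$, giving the claimed bound. I would keep the write-up short, flagging that everything reduces to this one projection identity plus the definition of the operator-norm-type quantity $\min_{|v|=1}|D^2_{xy}s\cdot v|$.
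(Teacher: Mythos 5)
Your proposal is correct and, once the meandering middle is stripped away, follows essentially the same route as the paper: identify $D_{X^i}s_{y_i}$ as $D_xs_{y_i}$ minus its projection onto $\mathrm{span}\{D_xs_{y_j}\}_{j\neq i}$, write the result as $D^2_{xy}s\cdot v'$ with $v'_i=1$ (the paper phrases this as $\min_{v:\,v_i=1}|D^2_{xy}s\cdot v|$), and conclude from $|v'|\ge 1$ that it is bounded below by $\min_{|v|=1}|D^2_{xy}s\cdot v|$. Note that $|v'|\ge 1$ needs nothing beyond $v'_i=1$, so the extra remark about $D^2_{xy}s^TD^2_{xy}s$-orthogonality is unnecessary.
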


\begin{proof}
Note that $D_{X^i}s_{y_i}$ is  $D_xs_{y_i}$, minus its projection onto the span of the other $D_xs_{y_j}$, and so

\begin{eqnarray*}
|D_{X^i}s_{y_i}|&=&\min_{v_1,v_2,...v_{i-1},v_{i+1}...v_n}|D_xs_{y_i} -\sum_{j \neq i }v_jD_xs_{y_j}|\\
&=&\min_{v=(v_1,...,v_n) \in \mathbf{R}^n,\text{ }v_i=1} |D^2_{xy}s\cdot v|\\
 &\geq& \min_{v \in \mathbf{R}^n,\text{ }|v|=1}| D^2_{xy}s\cdot v|.
\end{eqnarray*}
\hfill\end{proof}

  Note that the outward unit normal to $X^i_{\leq}(y,p)$ in $X^{i}(y, p)$   is

$$ 
\hat n^i:=\frac{D_{X^i}s_{y_i}}{|D_{X^i}s_{y_i}|}
$$ 
and the normal velocity of $X_1(y,p)$ in $X^{i}(y, p)$ as $p_i$ is varied is 
$$
V^i=\frac{\hat n^i}{|D_{X^i}s_{y_i}|}.
$$
 Here $D_{X^i}s_{y_i}=D_{X^i(y,p)}s_{y_i}(x,y),$ and objects defined in terms of it, such as, $\hat n^i =\hat n^i (x,y,p)$  are defined only for $x \in X^i(y,p)$.  We will denote
$$
D_{X^i}s_{y_i}(x,y):=D_{X^i(y,p)}s_{y_i}(x,y)\Big|_{p=D_ys(x,y)}
$$
which is defined globally on $X' \times Y'$.  Expressions such as $\hat n^i(x,y)$  are defined analogously.  
%[BP: IT SEEMS LIKE THE DERIVATIVES OF $V^i(x,y)$, ETC., SHOULD BE CONTROLLED BY THOSE OF $s$.]

Similarly, the outward unit normal to $\Big(\overline{X^i_{\leq}(y,p)}\Big) \cap \partial X$ in $\Big(\overline{X^i(y,p)}\Big) \cap \partial X$ will be denoted $\hat n^i_\partial $.  Denote by $\hat n_X^i=\frac{(\hat n_X)_{T_xX^i}}{|(\hat n_X)_{T_xX^i}|}$ the (renormalized) projection of $\hat n_X$ onto $T_xX^i$, which is well-defined by tranversality (note $|(\hat n_X)_{T_xX^i}| \geq |(\hat n_X)_{T_xX_1}|$).  This is the outward unit normal to $\overline X^i(y,p)\cap X$ in $\overline X^i(y,p)$.

We have that 
$$
\hat n^i_\partial =\frac{\hat n^i -(\hat n_X^i\cdot \hat n^i)\hat  n_X^i}{\sqrt{1 -(\hat n_X^i\cdot \hat n^i)^2}}.
$$

 %and is the projection of $\hat n^i$ onto the tangent space of $\Big(\overline{X^i(y,p)}\Big) \cap \partial X$.  
 Note that 
 $$
 V^i_\partial :=\frac{ | V^i  |}{\sqrt{1 -(\hat n_X^i\cdot \hat n^i)^2}}\hat n^i_\partial
 $$% is the projection of $V^i$ onto the same tangent space, and 
 represents the normal velocity of $\Big(\overline{X_1(y,p)}\Big) \cap \partial X$ in $\Big(\overline{X^i(y,p)}\Big) \cap \partial X$.
The denominator is bounded away from $0$ by the transversality assumption.

Analogously to \eqref{eqn: flux differentiation}, Lemma 5.1 in \cite{ChiapporiMcCannPass17} implies for $g \in L^\infty$ that
% (NEED ASSUMPTIONS HERE, EG NONTRANVERSALITY) 
$\Psi^i(y,p):=\int_{X^{i}_{\leq }(y, p)  \cap \partial X} g(x,y)d\mathcal H ^{m-n}(x)$ has Lipschitz dependence on $p$, 
and

%\marginpar{RM: \re the red magnitude signs look suspicious to me}
%\marginpar{BP: \blu I think they're correct...what do you think is wrong? Note that there was a typo  (now corrected in %blue) in the definition of $\hat n^i_\partial$ }
 
\begin{equation}\label{eqn: boundary flux differentiation}
\frac{\partial \Psi^i}{\partial p_i}(y,p) =\int_{\overline {X^{i}_ =(y, p)}\cap \partial X}g(x,y)
{ |  V^i_\partial  |} d{ \mathcal  H} ^{m-n-1}(x)
\qquad {\rm [a.e.].}
\end{equation}

\begin{lemma}[Derivative bounds along submanifolds]
\label{lemma: submanifold differentials}
	Given functions $a:X' \times Y' \rightarrow \mathbb{R}$, $b: \partial X \times Y \rightarrow \mathbb{R}$ and vector fields $v:\bar X'\times Y' \rightarrow TX$ and $w: (\overline{X'} \cap \partial X) \times Y \rightarrow T\partial X$ such that $v(x,y) \in T_xX^i(x,D_ys(x,y))$ and $w(x,y) \in T_x  ( \bar X^i (x,D_ys(x,y)  )\cap \partial X )$ everywhere, we have:
%\marginpar{RM: I would have thought by $ \| a,b,v\|_{C^{k+1,1}}$ and $\|D_ys \|_{C^{k+1,1}}$ instead of what's
%written.}
%\marginpar{%BP: I fixed $a,b$ and $v$, but why isn't it  as written for $D_ys$? \\
%RM: \re You're right about $\|D_y s\|_{C^{k,1}}$, but shouldn't it be $\|\hat n_X\|_{C^{k-1,1}(\p X' \times Y')}$ in (c)?}

%\marginpar{BP:\blu In 3., the normal arises because we subtract a projection onto a basis that includes it, so I still think it should be the $k$ norm.  But in 4., I now think we also need the normal, as the boundary divergence involves it.I think this dependence should be through the $k+1$ order derivatives}%  (the reason is as in our CPAM paper; divergence involves Christoffel symbols, so first derivatives of the metric.)}
\begin{enumerate}
	\item $||D_{X^i(y,D_ys(x,y))}a(x,y)||_{C^{k,1}(X'\times Y')}$ is controlled by $||a||_{C^{k+1,1}(X'\times Y')}$, $||D_ys||_{C^{k,1}(X'\times Y')}$, and nondegeneracy.
	\item $||\nabla_{X^i(x,D_ys(x,y))} \cdot v||_{C^{k,1}(X'\times Y')}$ is controlled by $||v||_{C^{k+1,1}(X'\times Y')}$.
\item $||D_{\overline{ X^i(y,D_ys(x,y))} \cap \partial X}b(x,y)||_{C^{k,1}((\overline{X'} \cap \partial X)\times Y')}$ is controlled by $||b||_{C^{k+1,1}((\overline{X'} \cap \partial X)\times Y')}$, $||D_ys||_{C^{k,1}((\overline{X'} \cap \partial X)\times Y')}$, nondegeneracy, tranversality and $||\hat n_X||_{C^{k,1}(\overline{X'} \cap \partial X)}$
\item $||\nabla_{\overline{ X^i(x,D_ys(x,y))} \cap \partial X} \cdot w||_{C^{k,1}((\overline{X'} \cap \partial X)\times Y')}$ is controlled by $||w||_{C^{k+1,1}((\overline{X'} \cap \partial X)\times Y')}$ and  $||\hat n_X||_{C^{k+1,1}(\overline{X'} \cap \partial X)}$.
	\end{enumerate}
\end{lemma}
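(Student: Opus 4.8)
The plan is to reduce each of the four estimates to a closed-form expression for the relevant tangential operator in terms of orthogonal projection matrices, and then to invoke the Leibniz rule together with the fact that matrix inversion (away from a uniform lower bound on the relevant singular values) is a bounded operation on the spaces $C^{k,1}$. The key structural point is that the tangent plane $T_xX^i(y,D_ys(x,y))$ depends on $(x,y)$ alone: it is exactly the orthogonal complement in $\mathbf{R}^m$ of $\mathrm{span}\{D_xs_{y_j}(x,y)\}_{j\ne i}=\mathrm{span}\{D^2_{xy}s(x,y)\hat e_j\}_{j\ne i}$, so no implicit-function-theorem parametrization of $X^i$ is needed and only (derivatives of) $D^2_{xy}s$ enter.

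\textbf{Interior statements (1) and (2).} Let $A=A(x,y)$ be the $m\times(n-1)$ matrix with columns $D^2_{xy}s(x,y)\hat e_j$, $j\ne i$. The non-degeneracy hypothesis $\min_{|v|=1}|D^2_{xy}s\cdot v|\ge c>0$ forces $A^TA$ to be uniformly invertible with $\|(A^TA)^{-1}\|$ controlled by $c^{-1}$; hence the orthogonal projection onto $T_xX^i$,
$$\Pi^i:=I-A(A^TA)^{-1}A^T,$$
is controlled in $C^{k,1}(X'\times Y')$ by the hypothesized norm of $D_ys$ and by $c^{-1}$, since matrix multiplication and inversion preserve $C^{k,1}$ bounds. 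One then writes the tangential differential and the surface divergence as $D_{X^i}a=\Pi^iD_xa$ and $\nabla_{X^i}\!\cdot v=\tr(\Pi^iD_xv)$ --- the latter being valid because $v$ is tangent, so $\Pi^iv=v$ --- and (1), (2) follow from the Leibniz rule in $C^{k,1}$, using that $D_xa\in C^{k,1}$ is controlled by $\|a\|_{C^{k+1,1}}$ and $D_xv\in C^{k,1}$ by $\|v\|_{C^{k+1,1}}$. In passing this recovers the formula for $D_{X^i}s_{y_i}$ underlying \eqref{eqn: flux differentiation} and Lemma~\ref{lemma: nondegeneracy on submanifolds}.

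\textbf{Boundary statements (3) and (4).} Working on $(\overline{X'}\cap\partial X)\times Y'$, Lemma~\ref{lemma: transversality of submanifolds} gives $T_x(\overline{X^i}\cap\partial X)=T_xX^i\cap T_x\partial X$ as a transversal intersection \emph{inside} $T_xX^i$, with a uniform lower bound supplied by the transversality hypothesis $|(\hat n_X)_{T_xX_1(y,p)}|>0$ (together with $|(\hat n_X)_{T_xX^i}|\ge|(\hat n_X)_{T_xX_1}|$). Consequently the unit vector $\hat n^i_X=(\hat n_X)_{T_xX^i}/|(\hat n_X)_{T_xX^i}|$ is controlled in $C^{k,1}$, resp.\ $C^{k+1,1}$, by $\|\hat n_X\|_{C^{k,1}}$, resp.\ $\|\hat n_X\|_{C^{k+1,1}}$, together with non-degeneracy and transversality, and the orthogonal projection onto $T_x(\overline{X^i}\cap\partial X)$ is the rank-one correction $\Pi^i_\partial=\Pi^i-\hat n^i_X(\hat n^i_X)^T$. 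Then $D_{\overline{X^i}\cap\partial X}b=\Pi^i_\partial D_xb$ and $\nabla_{\overline{X^i}\cap\partial X}\!\cdot w=\tr(\Pi^i_\partial D_xw)$, and (3), (4) follow exactly as in the interior case; the divergence cases (2) and (4) consume one extra derivative of $\Pi^i$, resp.\ $\Pi^i_\partial$, hence of $\hat n_X$, which accounts for the index shift in the statement.

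\textbf{Main obstacle.} I expect the principal difficulty to be the derivative bookkeeping rather than any conceptual step: one must verify that forming the Gram matrix $A^TA$, inverting it, renormalizing $\hat n^i_X$, and assembling the rank-one correction are each carried out without losing a derivative, so that $\Pi^i$ and $\Pi^i_\partial$ genuinely land in $C^{k,1}$ --- which is precisely why the data are assumed with one order of slack relative to the output. A more routine point is to confirm that the projection formulas above really do compute the intrinsic tangential differential and surface divergence (standard, using $\Pi^iv=v$ and $\Pi^i_\partial w=w$ for tangent fields), and that the ambient $C^{k,1}$ norms restrict and take traces correctly onto $X^i$ and onto $\overline{X^i}\cap\partial X$; the latter is handled by covering the relevant compact pieces with finitely many boundary-flattening charts whose regularity is again governed by the same data.
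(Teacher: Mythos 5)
Your proposal is correct in substance and, for the two gradient estimates (items 1 and 3), it is essentially the paper's own argument: the paper writes $D_{X^i}a = D_xa - \sum_{j\ne i}(D_xa\cdot e_j)\,e_j$ with $e_j$ an orthonormal basis of ${\rm span}\{D_xs_{y_j}\}_{j\ne i}$ built by Gram--Schmidt (augmented by $\hat n_X$ in the boundary case, using transversality), which is exactly your projector $\Pi^i=I-A(A^TA)^{-1}A^T$ (resp.\ $\Pi^i_\partial$) in different clothing; controlling the Gram inverse by non-degeneracy is the same point as controlling the Gram--Schmidt projections. Where you genuinely diverge is the two divergence estimates (items 2 and 4): the paper disposes of these in one line by identifying the leafwise divergence of the tangent field with the ambient divergence $\nabla_X\cdot v$ (resp.\ with $\nabla_{\partial X}\cdot w$, which is why only derivatives of $\hat n_X$, and no non-degeneracy or $D_ys$ norms, appear there), whereas you use the trace formula $\nabla_{X^i}\cdot v=\tr(\Pi^i D_xv)$. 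Your formula is the more careful one --- for a field tangent to a foliation the intrinsic and ambient divergences differ in general by curvature terms, so the paper's identification is really a shortcut --- but it proves a slightly different statement: your bounds for items 2 and 4 pick up dependence on non-degeneracy and on $\|D_ys\|$ (and on transversality and $\hat n_X$ for item 4) that the lemma as stated does not list. This is harmless downstream, since Corollary \ref{lemma: iterating derivatives} and Theorem \ref{thm: smoothness of G} assume control of precisely these quantities anyway. One small internal slip: your remark that the divergence cases ``consume one extra derivative of $\Pi^i$, hence of $\hat n_X$'' does not follow from your own formula, which needs only $\Pi^i_\partial\in C^{k,1}$ and hence $\hat n_X\in C^{k,1}$; the $C^{k+1,1}$ norm of $\hat n_X$ in item 4 reflects the paper's route through the intrinsic divergence (metric) of $\partial X$, not yours.
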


\begin{proof}
First we prove the first implication.  Note that $D_{X^i(y,D_xs(x,y))}a(x,y) $ is equal to  $D_xa(x,y)$, minus it's projection onto the span of the $D_xs_{y_j}$ for $j \neq i$; that is
$$
D_{X^i(y,D_ys(x,y))}a(x,y) = D_xa(x,y) -\sum_{j =1}^{n-1}[ D_xa(x,y)\cdot e_j(x,y)] e_j(x,y)
$$
where the $e_j(x,y)$ are an orthonormal basis for the span of $\{D_xs_{y_j}(x,y)\}_{j \neq i}$.  The $e_j$ can then be written explicitly as functions of the $D_xs_{y_j}(x,y)$, using for instance the Gram-Schmidt procedure; the definition of $e_j$ involves projections onto the $e_{\bar j}$ for $\bar j <j$, which are controlled by nondegeneracy. 

The second implication follows by noting that the divergence $\nabla_{X^i(x,D_ys(x,y))} \cdot v(x,y)$ coincides with $\nabla_{X} \cdot v(x,y)$.

The proof of the third implication is identical to that of the first, except that we subtract the projection onto the span of $\{D_{x}s_{y_j}(x,y)\}_{j \neq i}\cup \{\hat n_X\}$;
 this is controlled by nondegeneracy and transversality, as well as the smoothness of these basis vectors.

Finally, the proof of the fourth assertion is almost the same as the second; the divergence coincides with $\nabla_{\partial X} \cdot w(x,y)$,  which involves first derivatives of the metric, and hence of $\hat n_X$, as in the remarks preceding Lemma 7.2 in \cite{ChiapporiMcCannPass17}.
%\marginpar{RM:  Please add  some details. In 2, $v$ is tangent to the fibres, but this is no longer true in 4, is it?}\marginpar{BP: It should be the divergence operator on the boundary, as is now written.  I also don't think the original claim in 4 made sense, as $v$ was not tangent to the space we were taking the divergence in.}
%BP: IS THIS A STANDARD OR WELL KNOWN FACT?  IF NOT, I HAVE A CALCULATION THAT I THINK PROVES IT...
\end{proof}

Now, we define $s^*(x,p)$ to be the Legendre transformation of $s$ with respect to the $y$ variable:
$$
s^*(x,p) =\sup_y(y \cdot p - s(x,y)).
$$

\begin{lemma}[Smoothness and non-degeneracy for Legendre duals]\label{lemma s* smoothness}
The transformation $s^*$ inherits the same smoothness as $s$, and is non-degenerate.  Further, its non-degeneracy is quantitatively controlled by the non-degeneracy and $C^2$ norm of $s$:

$$
\inf_{|u|=1}| D^2_{xp}s^*(x,p)\cdot u| \geq \frac{\inf_{|v|=1}|D^2_{xy}s(x,y)\cdot v|}{||D^2_{yy}s(x,y)||}
$$
for $p=D_ys(x,y)$.
\end{lemma}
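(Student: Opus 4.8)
The plan is to exploit the standing assumption of this section that $y \mapsto s(x,y)$ is uniformly convex, which turns $s^{*}$ into a genuine value function with a unique, smoothly varying maximizer, after which everything follows by differentiating. Fix $x \in X'$. Since $D^{2}_{yy}s(x,y) \geq CI > 0$ on $X' \times Y'$, the function $y \mapsto y\cdot p - s(x,y)$ is strictly concave, so for $p$ in the relevant range the supremum defining $s^{*}(x,p)$ is attained at a unique point $y^{*} = y^{*}(x,p)$, characterized by the first-order condition $p = D_{y}s(x,y^{*})$; in particular $y^{*}(x, D_{y}s(x,y)) = y$ for $(x,y) \in X'\times Y'$, by injectivity of $D_{y}s(x,\cdot)$. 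Because the map $(x,y) \mapsto D_{y}s(x,y)$ is one derivative less smooth than $s$ and has invertible $y$-differential $D^{2}_{yy}s$, the implicit function theorem shows $y^{*}$ is one derivative less smooth than $s$; differentiating the first-order condition in $x$ and $p$ gives, up to the transposition conventions of the paper, $D_{p}y^{*} = [D^{2}_{yy}s]^{-1}$ and $D_{x}y^{*} = -[D^{2}_{yy}s]^{-1}(D^{2}_{xy}s)^{T}$, all evaluated at $(x, y^{*}(x,p))$.

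For the first assertion, write $s^{*}(x,p) = y^{*}(x,p)\cdot p - s(x, y^{*}(x,p))$. If $s \in C^{k}$ with $k \geq 2$, then $y^{*} \in C^{k-1}$, so this expression is a priori $C^{k-1}$, hence $C^{1}$. Differentiating by the chain rule and using $p - D_{y}s(x,y^{*}) = 0$ to cancel the terms involving $Dy^{*}$ (the \emph{envelope theorem}) yields $D_{p}s^{*}(x,p) = y^{*}(x,p)$ and $D_{x}s^{*}(x,p) = -D_{x}s(x, y^{*}(x,p))$. Both right-hand sides are compositions of $C^{k-1}$ maps, so $Ds^{*} \in C^{k-1}$ and therefore $s^{*} \in C^{k}$; the identical bootstrap gives the corresponding $C^{k,\alpha}$ and $C^{k,1}$ statements, so $s^{*}$ inherits the smoothness of $s$.

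For non-degeneracy, differentiate $D_{p}s^{*} = y^{*}$ once more in $x$ to get $D^{2}_{xp}s^{*}(x,p) = -D^{2}_{xy}s(x, y^{*})\,[D^{2}_{yy}s(x,y^{*})]^{-1}$. Given a unit vector $u$, set $w = [D^{2}_{yy}s]^{-1}u$; since the smallest singular value of $[D^{2}_{yy}s]^{-1}$ equals $1/\|D^{2}_{yy}s\|$, we have $|w| \geq 1/\|D^{2}_{yy}s\|$, whence
\[
|D^{2}_{xp}s^{*}(x,p)\cdot u| = |D^{2}_{xy}s(x, y^{*})\cdot w| \geq \Big(\min_{|v| = 1}|D^{2}_{xy}s(x,y^{*})\cdot v|\Big)\,|w| \geq \frac{\min_{|v|=1}|D^{2}_{xy}s(x,y)\cdot v|}{\|D^{2}_{yy}s(x,y)\|}
\]
with $y = y^{*}(x,p)$, i.e.\ $y$ the solution of $p = D_{y}s(x,y)$. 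This is the claimed inequality, and its positivity (using non-degeneracy of $s$ and finiteness of $\|D^{2}_{yy}s\|$) shows $D^{2}_{xp}s^{*}$ is injective on $\mathbb{R}^{n}$, hence has maximal rank, so $s^{*}$ is non-degenerate. There is no serious obstacle here: the content is standard Legendre-transform calculus. The only points requiring care are verifying that the supremum is realized at an interior maximizer for the relevant $p$ — which is exactly where the uniform convexity and boundedness hypotheses of the section enter, and is why ``inherits the smoothness of $s$'' should be read locally near points $(x, D_{y}s(x,y))$ with $(x,y)\in X'\times Y'$ — and keeping the row/column conventions for $D^{2}_{xy}s$ and $D^{2}_{xp}s^{*}$ consistent with the rest of the paper.
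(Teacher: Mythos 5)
Your proof is correct and follows essentially the same route as the paper: the implicit function theorem applied to the first-order condition $p=D_ys(x,y^*)$ together with the identity $D_ps^*(x,D_ys(x,y))=y$, then the chain-rule identity $D^2_{xp}s^*=-D^2_{xy}s\,[D^2_{yy}s]^{-1}$ and the bound $|[D^2_{yy}s]^{-1}u|\ge 1/\|D^2_{yy}s\|$ for unit $u$. The only difference is that you spell out the envelope-theorem bootstrap for the smoothness claim, which the paper leaves implicit.
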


\begin{proof}
Uniform convexity implies that $s^*$ is continuously twice differentiable with respect to $p$.  The implicit function theorem combined with the identity $D_ps^*(x,D_ys(x,y)) =y$ implies the smoothness of $s^*$.  In particular, differentiating with respect to $x$ yields

$$
D^2_{xp}s^*(x,D_ys(x,y)) =-D^2_{xy}s(x,y) D^2_{pp}s^*(x,D_ys(x,y))
$$
and so invertibility of $D^2_{pp}s^*$ and nondegeneracy of $s$ imply nondegeneracy of $s^*$, and we have, for $|u|=1$,

\begin{eqnarray*}
D^2_{xp}s^*(x,D_ys(x,y))\cdot u& =&-D^2_{xy}s(x,y) D^2_{pp}s^*(x,D_ys(x,y))\cdot u\\
&=&-D^2_{xy}s(x,y)\frac{ D^2_{pp}s^*(x,D_ys(x,y))\cdot u}{|D^2_{pp}s^*(x,D_ys(x,y))\cdot u|}|D^2_{pp}s^*(x,D_ys(x,y))\cdot u|.
\end{eqnarray*}
Now note that setting $v=D^2_{pp}s^*(x,D_ys(x,y))\cdot u =[D^2_{yy}s(x,y)]^{-1}\cdot u$, so that $1=|u| =|D^2_{yy}s(x,y)\cdot v| \leq||D^2_{yy}s(x,y)||\cdot| v|$.  Therefore
$$
|v| \geq \frac{1}{||D^2_{yy}s(x,y)||}
$$ 
 and the result follows.
\end{proof}

Now, we can identify the set $X_1(y,p)=\{ x \mid D_ps^*(x,p)=y\}$.  We then define  $X^{*i}_{\leq }(y, p)$, $ X^{*i}(y, p)$ and $\Phi^{*i}$  analogously to above, and compute 

\begin{equation}
\frac{\partial \Phi^{*i}}{\partial y_i} =\int_{X^{*i}_ =(y, p)}\frac{f(x,y)}{|D_{X^{*i}}s^*_{p_i}|}d\mathcal H ^{m-n-1}(x) +\int_{X^{*i}_{\leq }(y, p)}\frac{\partial f}{\partial y_i} (x,y)d\mathcal H ^{m-n}(x) %\qquad a.e.
\end{equation}
for a.e. $(y,p)$ as long as $f$ and $f_{y_i}$ are Lipschitz.

Analogs of Lemmas \ref{lemma: transversality of submanifolds}, \ref{lemma: nondegeneracy on submanifolds} and \ref{lemma:  submanifold differentials} when $s(x,y)$ is replaced by $s^*(x,p)$ then follow immediately.  We note that

$$
D_{X^{i*}}s^*_{p_i}(x,y):=D_{X^{i^*}(y,p)}s^*_{p_i}(x,p)\Big|_{p=D_ys(x,y)}
$$
is defined throughout $X' \times Y'$.  We define $\hat n^{*i}$, $V^{*i}$, $\hat n_\partial ^{*i}$, $\hat n^{*i}_X$, $V^{*i}$ analogously to their un-starred counterparts and note that upon evaluating at $p=D_ys(x,y)$, each can be considered a function on $X' \times Y'$ or $\partial X ' \times Y'$.

%In the argument below, we want transversal intersections of $\partial X^i = \bar X^i \cap \partial C$ and $X^1$ in $X^i$.  I think transversality of the $\partial X$ and $X_1$ gives this.

\begin{lemma}[Flux derivatives through moving surfaces]
\label{lemma: differentiating level set integrals}
 Use $a: \overline {X' \times Y' \times P'} \rightarrow \mathbb{R}$ Lipschitz %Assume $a(x,y,p)$ is smooth 
 to define $\Phi(y,p) := \int_{X_1(y,p)}a(x,y,p)d\mathcal{H}^{m-n}(x)$ and $\Psi(y,p): = \int_{\overline{X_1(y,p)} \cap \partial X}a(x,y,p)d\mathcal{H}^{m-n-1}(x)$.  Then $\Phi$ and $\Psi$ are Lipschitz with  partial derivatives given almost everywhere by:

\begin{eqnarray}\nonumber
\frac{\partial \Phi(y,p)}{\partial p_i}& =&  \int_{X_1(y,p)}\Big[\nabla_{X^{i}(y, p)} \cdot \Big( a(x,y,p)\frac{D_{X^i}s_{y_i}}{|D_{X^i}s_{y_i}|}\Big) V^i \cdot  \hat n^i\Big]_{p=D_ys(x,y)} d\mathcal{H}^{m-n}(x)\\ \nonumber
&-&\int_{\Big(\overline{X_1(y,p)}\Big) \cap \partial X} \Big[ \Big( a(x,y,p)\frac{D_{X^i}s_{y_i}}{|D_{X^i}s_{y_i}|}\Big)\cdot \hat n_X^iV^i_\partial \cdot \hat n^i_\partial \Big]_{p=D_ys(x,y)} d\mathcal{H}^{m-n-1}(x)\\
&+& \int_{X_1(y,p)}\Big[\frac{\partial a(x,y,p)}{\partial p^i}\Big]_{p=D_ys(x,y)} d\mathcal{H}^{m-n}(x),\label{eqn: interior p derivative}
\end{eqnarray}

\begin{align}\nonumber
\frac{\partial \Psi(y,p)}{\partial p_i} &=\\&\int_{\overline{X_1(y,p)} \cap \partial X}\Big[\nabla_{\overline{X^{i}(y, p)} \cap \partial X}\cdot \Big( a(x,y,p)\frac{D_{\overline{X_i(y,p)} \cap \partial X}s_{y_i}}{|D_{\overline{X_i(y,p)} \cap \partial X}s_{y_i}|}\Big) V^i_{\partial} \cdot  \hat n^{i}_{\partial} \Big]_{p=D_ys(x,y)} d\mathcal{H}^{m-n}(x)\nonumber\\
&+ \int_{\overline{X_1(y,p)} \cap \partial X}\Big[\frac{\partial a(x,y,p)}{\partial p^i}\Big]_{p=D_ys(x,y)} d\mathcal{H}^{m-n-1}(x),\label{eqn: boundary p derivative}
\end{align}

\begin{eqnarray}\nonumber
\frac{\partial \Phi(y,p)}{\partial y_i}& =&  \int_{X_1(y,p)}\Big[\nabla_{X^{*i}(y, p)}\cdot  \Big( a(x,y,p)\frac{D_{X^{*i}}s^*_{p_i}}{|D_{X^{*i}}s^*_{p_i}|}\Big) V^{*i} \cdot  \hat n^{*i} \Big]_{p=D_ys(x,y)} d\mathcal{H}^{m-n}(x)\\\nonumber
&-&\int_{\Big(\overline{X_1(y,p)}\Big) \cap \partial X}\Big[  \Big( a(x,y,p)\frac{D_{X^{*i}}s^*_{p_i}}{|D_{X^{*i}}s_{p_i}|}\Big)\cdot \hat n_X^{*i}V^{*i}_{\partial} \cdot \hat n^{*i}_\partial\Big]_{p=D_ys(x,y)}  d\mathcal{H}^{m-n-1}(x)\\
&+& \int_{X_1(y,p)}\Big[\frac{\partial a(x,y,p)}{\partial y^i}\Big]_{p=D_ys(x,y)} d\mathcal{H}^{m-n}(x),\label{eqn: interior y derivative}
\end{eqnarray}
 
and
\begin{align}\nonumber
\frac{\partial \Psi(y,p)}{\partial y_i}& =\\ &\int_{\overline{X_1(y,p)} \cap \partial X}\Big[\nabla_{\overline{X^{*i}(y, p)}\cap \partial X} \cdot \Big( a(x,y,p)\frac{D_{\overline{X^{*i}(y,p)} \cap \partial X}s^*_{p_i}}{|D_{\overline{X^{*i}(y,p)} \cap \partial X}s^*_{p_i}|}\Big) V^{*i}_{\partial} \cdot  \hat n^{*i\partial} \Big]_{p=D_ys(x,y)} d\mathcal{H}^{m-n}(x)\nonumber\\
&+ \int_{\overline{X_1(y,p)} \cap \partial X}\Big[\frac{\partial a(x,y,p)}{\partial y^i}\Big]_{p=D_ys(x,y)} d\mathcal{H}^{m-n-1}(x).\label{eqn: boundary y derivative}
\end{align}
\end{lemma}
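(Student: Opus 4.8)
The plan is to derive all four formulas as consequences of the two basic flux-differentiation identities already recorded in this section, namely \eqref{eqn: flux differentiation} and \eqref{eqn: boundary flux differentiation} (which are themselves Lemma 5.1 of \cite{ChiapporiMcCannPass17}), together with the divergence theorem on the submanifolds $X^i(y,p)$ and $\overline{X^i(y,p)}\cap\partial X$. The Lipschitz regularity of $\Phi$ and $\Psi$ in $(y,p)$ is the easiest part: for fixed $p$, varying $y$ only enters through the integrand $a(x,y,p)$ and, via $p=D_ys(x,y)$, is handled by $s\in C^2$; for fixed $y$, varying $p_i$ sweeps the level set $X_1(y,p)$ across the ambient submanifold $X^i(y,p)$ at finite normal speed $V^i$ by nondegeneracy, so the conclusion follows from the cited Lipschitz statement exactly as in \cite{ChiapporiMcCannPass17}. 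Hence almost-everywhere differentiability holds and it remains only to identify the derivatives.

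\textbf{Key steps.} First I would treat $\partial\Phi/\partial p_i$. Write $\Phi(y,p)$ as a boundary integral of $a$ over the relative boundary $X_1(y,p)$ of $X^i_{\le}(y,p)$ inside the $(m-n+1)$-dimensional manifold $X^i(y,p)$. Extend $a$ off $X_1(y,p)$ (say by the value $a(x,y,p)$ with $p$ now frozen) and apply the divergence theorem on $X^i_{\le}(y,p)$ to the vector field $a\,\hat n^i = a\,D_{X^i}s_{y_i}/|D_{X^i}s_{y_i}|$: this converts $\Phi$ into a bulk integral $\int_{X^i_{\le}(y,p)}\nabla_{X^i}\cdot(a\,\hat n^i)$, to which \eqref{eqn: flux differentiation} applies directly, producing the first term of \eqref{eqn: interior p derivative} (with the factor $V^i\cdot\hat n^i = |V^i|$, i.e.\ the normal speed), while the bulk region itself has a second boundary component on $\partial X$, which on differentiation contributes the $\Psi$-type term — the second line of \eqref{eqn: interior p derivative} — via \eqref{eqn: boundary flux differentiation}; and the explicit $p$-dependence of $a$ contributes the last term by dominated convergence. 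The identity $\partial\Phi/\partial y_i$ is obtained by exactly the same argument after passing to the Legendre dual: by Lemma \ref{lemma s* smoothness}, $X_1(y,p)=\{x\mid D_ps^*(x,p)=y\}$, $s^*$ is $C^2$ and nondegenerate, and varying $y_i$ now plays the role that varying $p_i$ played above, so \eqref{eqn: interior y derivative} follows with all starred objects in place of unstarred ones; the extra explicit $y$-dependence of $a$ again gives the final term. Finally \eqref{eqn: boundary p derivative} and \eqref{eqn: boundary y derivative} are the corresponding computations carried out intrinsically on the $(m-n)$-dimensional boundary manifold $\overline{X^i(y,p)}\cap\partial X$, using the submanifold transversality of Lemma \ref{lemma: transversality of submanifolds} to guarantee that $\overline{X_1(y,p)}\cap\partial X$ is its smooth relative boundary, and the restriction-nondegeneracy estimate of Lemma \ref{lemma: nondegeneracy on submanifolds} (in its $s$- and $s^*$-versions) to control $V^i_\partial$, $V^{*i}_\partial$; there is no second boundary term here because $\overline{X^i(y,p)}\cap\partial X$ has no further boundary inside the configuration.

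\textbf{Main obstacle.} The delicate point is justifying the divergence-theorem step: $a\,\hat n^i$ need only be Lipschitz (since $\hat n^i$ is one derivative less regular than $s$), and more seriously the domain $X^i_{\le}(y,p)$ has a two-faced relative boundary — the ``free'' face $X_1(y,p)$ and the ``fixed'' face on $\partial X$ — meeting along $\overline{X_1(y,p)}\cap\partial X$, so one must check that these two codimension-one pieces meet transversally (Lemma \ref{lemma: transversality of submanifolds}) and that no contribution is lost at their common edge; this is exactly the geometry that forces the appearance of the surface term on the second line of \eqref{eqn: interior p derivative}. I would handle the low regularity by approximating $a$ and $s$ by smooth functions, applying the classical divergence theorem and the smooth coarea/Reynolds transport formula, and passing to the limit using the uniform bounds from nondegeneracy and transversality; the factor $V^i\cdot\hat n^i$ and $V^i_\partial\cdot\hat n^i_\partial$ then emerge as the normal velocities of the moving hypersurfaces, precisely as in the transport-theorem computation, while the projection factors $\hat n^i_X$ in the boundary terms record the obliqueness of $\partial X$ relative to $X^i(y,p)$.
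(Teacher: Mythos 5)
Your proposal is correct and follows essentially the same route as the paper: rewrite $\Phi$ (and $\Psi$) via the generalized divergence theorem as sublevel-set integrals plus fixed and $\partial X$ boundary terms, differentiate these using the Lipschitz flux-differentiation identities \eqref{eqn: flux differentiation}--\eqref{eqn: boundary flux differentiation}, obtain the $y$-derivatives by the identical argument for the Legendre dual $s^*$, and extend from $C^{1,1}$ to Lipschitz integrands by approximation and dominated convergence, noting the resulting formulas depend only on $\|a\|_{C^{0,1}}$. The only cosmetic difference is that the paper smooths only $a$ (keeping $s$ fixed, since Lemma 5.1 of \cite{ChiapporiMcCannPass17} already handles Lipschitz integrands), whereas you also propose mollifying $s$, which is unnecessary.
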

%[BP: NOTE THAT WE CONSIDER INTEGRAND FUNCTIONS OF $(x,y,p)$ HERE, EVEN THOUGH THE INTEGRAND IN $G_1$ DEPENDS ONLY ON $(x,y)$.  THE REASON IS THAT THE DERIVATIVES WITH RESPECT TO $y$  HAVE INTEGRANDS DEPENDING ON $p$, AND SO ITERATING DERIVATIVES WILL REQUIRE THE MORE GENERAL FORM.]

%[BP: OK, ON SECOND THOUGHTS, I NOW DON'T THINK THIS IS SO IMPORTANT, AS WE CAN ALWAYS REPLACE $p$ WITH $D_ys(x,y)$ THROUGHOUT THE DOMAIN OF INTEGRATION.]

\begin{proof}
%	Much of this proof is similar to the proof of Lemma 7.4 in \cite{ChiapporiMcCannPass17}; we  
We begin by establishing the formulas assuming $a \in C^{1,1}(\overline{X' \times Y' \times P'})$.

%I will establish the formulas for the derivatives assuming $a$ is smooth; the rest of the argument (approximating by $C^{1,1}$ functions) should follow the same way as in Lemma 7.4 in the CPAM paper.
Using the generalized divergence theorem \cite[Proposition 27]{HofmannMitreaTaylor10}%\marginpar{BP: Actually, I think the smooth divergence theorem is good enough here. }, we have, for fixed $\bar p^i <p^i$, denoting by $p^{(i)}$ the vector whose $i$ th entry is $\bar p^i$ and all other entries are equal to those of $p$,

\begin{eqnarray*}
\Phi(y,p)& =& \int_{X_1(y,p)}\Big( a(x,y,p)\frac{D_{X^i}s_{y_i}}{|D_{X^i}s_{y_i}|}\Big) \cdot \hat n^id\mathcal{H}^{m-n}(x)\\
&=& \int_{X^i_{\leq}(y,p) \setminus X^i_{\leq}(y,p^{(i)})}\nabla_{X^{i}(y, p)}\cdot \Big( a(x,y,p)\frac{D_{X^i}s_{y_i}}{|D_{X^i}s_{y_i}|}\Big) d\mathcal{H}^{m-n+1}(x)  \\
&+&\int_{X^i_{=}(y,p^{(i)})}\Big( a(x,y,p)\frac{D_{X^i}s_{y_i}}{|D_{X^i}s_{y_i}|}\Big)\cdot \hat n^i(x,y)d\mathcal{H}^{m-n}(x)\\
&-&\int_{\Big(X^i_{\leq}(y,p) \setminus X^i_{\leq}(y,p^{(i)})\Big) \cap \partial X}  \Big( a(x,y,p)\frac{D_{X^i}s_{y_i}}{|D_{X^i}s_{y_i}|}\Big)\cdot \hat n_X^i(x,y)d\mathcal{H}^{m-n}(x).
\end{eqnarray*}
Noting that the integrands in the first and third terms above are bounded, one can then combine the chain rule with \eqref{eqn: flux differentiation} and  \eqref{eqn: boundary flux differentiation} to differentiate with respect to $p_i$, getting 
\begin{eqnarray*}
\frac{\partial \Phi(y,p)}{\partial p_i}& =&  \int_{X^i_{=}(y,p)}\nabla_{X^{i}(y, p)} \cdot\Big( a(x,y,p)\frac{D_{X^i}s_{y_i}}{|D_{X^i}s_{y_i}|}\Big) V^i \cdot  \hat n^i d\mathcal{H}^{m-n}(x)\\
&-&\int_{\Big(\overline{X^i_=(y,p)}\Big) \cap \partial X}  \Big( a(x,y,p)\frac{D_{X^i}s_{y_i}}{|D_{X^i}s_{y_i}|}\Big)\cdot \hat n_X^iV^i_\partial \cdot \hat n^i_\partial d\mathcal{H}^{m-n-1}(x)\\
&+& \int_{X_1(y,p)}\frac{\partial a(x,y,p)}{\partial p^i}d\mathcal{H}^{m-n}(x).
\end{eqnarray*}
Finally, notice that one may substitute $p=D_ys(x,y)$ in each integrand, as each region of integration is contained in $\overline X_1(y,p) $, to establish \eqref{eqn: interior p derivative} for $a \in C^{1,1}$.

Now, note that the formula \eqref{eqn: interior p derivative} for $\frac{\partial \Phi(y,p)}{\partial p_i}$ is controlled by $||a||_{C^{0,1}}$ (that is, it does not depend on $||a||_{C^{1,1}}$).  For $a$ merely Lipschitz, we can therefore choose a sequence $a_n \in C^{1,1}$ converging to $a$ in the $C^{0,1}$ norm; passing to the limit implies that $||\Phi||_{C^{0,1}(\overline {Y' \times P'})}$ is controlled by $||a||_{C^{0,1}}$, and, using the dominated convergence theorem, one obtains the desired formula.

A similar argument applies to the boundary integral terms to produce the desired formula \eqref{eqn: boundary p derivative} for $\frac{\partial \Psi(y,p)}{\partial p_i}$, while essentially identical arguments apply to the $y$ derivatives, yielding \eqref{eqn: interior y derivative} and \eqref{eqn: boundary y derivative}.
%
%The computation of the $y$ derivatives is essentially identical.
%Turning to the $y$ derivatives, the computation is similar.% after  identifying $X_1(y,p)=\{ x: D_ps^*(x,p)=y\} = %X^{*i}_=(y,p)$, we use the divergence theorem as above, combined with the chain rule, to obtain:
%
%\begin{eqnarray}
%\frac{\partial \Phi(y,p)}{\partial y_i}& =&  \int_{X^{i*}_{=}(y,p)}\nabla_{X^{*i}(y, p)} \Big( a(x,y,p)\frac{D_{X^{i*}}s^*_{p_i}}{|D_{X^{*i}}s^*_{p_i}|}\Big) V^{*i} \cdot  \hat n^{*i} d\mathcal{H}^{m-n}(x)\\
%&-&\int_{\Big(\overline{X^{*i}_=(y,p)}\Big) \cap \partial X}  \Big( a(x,y,p)\frac{D_{X^{*i}s^*_{p_i}}{|D_{X^{*i}}s^*_{p_i}|}\Big)\cdot \hat n_X^{*i}V_{*i\partial} \cdot \hat n^{*i}_\partial d\mathcal{H}^{m-n-1}(x).
%\end{eqnarray}
\end{proof}

%\begin{remark}
%The same approximation as in Lemma 7.4 in the CPAM paper works here if is $a$ is Lipschitz rather than smooth
%\end{remark}
\begin{comment}The following lemma verifies that the integrand can be viewed as a globally defined function of $x,y,p$ (note that operators like $D_{X^i(y,p)}$ are defined only on $X^i(y,p)$).

\begin{lemma}
	Throughout $X_1(y,p)$, we have
	$$
	\nabla_{X^{i}(y, p)}\cdot \Big( a(x,y,p)\frac{D_{X^i}s_{y_i}}{|D_{X^i}s_{y_i}|}\Big) V^i \cdot  \hat n^i =\nabla_{X^{i}(y, D_ys(x,y))}\cdot \Big( a(x,y,p)\frac{D_{X^i(y,D_xs(x,y))}s_{y_i}}{|D_{X^i(y,D_xs(x,y))}s_{y_i}|}\Big) V^i(x,y) \cdot  \hat n^i(x,y)
	$$
Similar identifications hold for terms of the form 
$$
\nabla_{X^{*i}(y, p)}\cdot \Big( a(x,y,p)\frac{D_{X^{*i}}s^*_{p_i}}{|D_{X^{*i}}s^*_{p_i}|}\Big) V^{*i} \cdot  \hat n^{*i}
$$
and boundary terms
$$
\nabla_{\overline{X^{i}(y, p)} \cap \partial X} \cdot \Big( b(x,y,p)\frac{D_{\overline{X_i(y,p)} \cap \partial X}s_{y_i}}{|D_{\overline{X_i(y,p)} \cap \partial X}s_{y_i}|}\Big) V^i_{\partial} \cdot  \hat n^{i\partial} 
$$

and 
$$
\nabla_{\overline{X^{*i}(y, p)}\cap \partial X} \cdot \Big( b(x,y,p)\frac{D_{\overline{X_{*i}(y,p)} \cap \partial X}s^*_{p_i}}{|D_{\overline{X^{*i}(y,p)} \cap \partial X}s^*_{p_i}|}\Big) V^{*i}_{\partial} \cdot  \hat n^{*i}_{\partial}.
$$

\end{lemma}
\begin{proof}
This follows immediately from the definition of $X_1$.
\end{proof}
\end{comment}

\begin{corollary}[Iterated derivative bounds]\label{lemma: iterating derivatives}
	The operators 
	$$
	A_{p_i}: (a,b)	\mapsto  (a_p^i,b_p^i) \quad \text{and} \quad A_{y_i}: (a,b)	\mapsto  (a_y^i,b_y^i),
	$$
given by
	\begin{eqnarray*}
a_p^i &:=&\Big[\nabla_{X^{i}(y, p)}\cdot \Big( a(x,y)\frac{D_{X^i}s_{y_i}}{|D_{X^i}s_{y_i}|}\Big) V^i \cdot  \hat n^i \Big]_{p=D_ys(x,y)},%\Big[\nabla_{X^{i}(y, p)}\cdot \Big( a(x,y,p)\frac{D_{X^i}s_{y_i}}{|D_{X^i}s_{y_i}|}\Big) V^i \cdot  \hat n^i +{\frac{\partial a(x,y,p)}{\partial p^i}\Big]\Big |_{p=D_ys(x,y)},
\\
b_p^i&:=&\Big[\Big( a(x,y)\frac{D_{X^i}s_{y_i}}{|D_{X^i}s_{y_i}|}\Big)\cdot \hat n_X^iV^i_\partial \cdot \hat n^i_\partial
\\ &&+\nabla_{\overline{X^{i}(y, p)} \cap \partial X} \cdot \Big( b(x,y)\frac{D_{\overline{X_i(y,p)} \cap \partial X}s_{y_i}}{|D_{\overline{X_i(y,p)} \cap \partial X}s_{y_i}|}\Big) V^i_{\partial} \cdot  \hat n^{i}_{\partial}  \Big]_{p=D_ys(x,y)},
%	
%\end{eqnarray*}
%\begin{eqnarray*}
%	b_p^i:&=&\Big[\Big( b(x,y,p)\frac{D_{X^i}s_{y_i}}{|D_{X^i}s_{y_i}|}\Big)\cdot \hat n_X^iV^i_\partial \cdot \hat n^i_\partial\\
%& &+\nabla_{\overline{X^{i}(y, p)} \cap \partial X} \cdot \Big( b(x,y,p)\frac{D_{\overline{X_i(y,p)} \cap \partial X}s_{y_i}}{|D_{\overline{X_i(y,p)} \cap \partial X}s_{y_i}|}\Big) V^i_{\partial} \cdot  \hat n^{i}_{\partial}  +\frac{\partial b(x,y,p)}{\partial p^i}\Big]\Big |%_{p=D_ys(x,y)},
%	\end{eqnarray*}
%	$$
\\a_y^i &:=& \Big[\nabla_{X^{*i}(y, p)}\cdot \Big( a(x,y)\frac{D_{X^{*i}}s^*_{p_i}}{|D_{X^{*i}}s^*_{p_i}|}\Big) V^{*i} \cdot  \hat n^{*i} +\frac{\partial a(x,y)}{\partial y^{i}}\Big]_{p=D_ys(x,y)},
{ and} 
%\begin{eqnarray*} 
\\ b_y^i&:=&\Big[\frac{\partial b(x,y)}{\partial y^i} + \Big(  a(x,y)\frac{D_{X^{*i}}s^*_{p_i}}{|D_{X^{*i}}s^*_{p_i}|}\Big)\cdot \hat n_X^{*i}V^{*i}_\partial \cdot \hat n^{*i}_\partial\\
&&+ \nabla_{\overline{X^{*i}(y, p)}\cap \partial X} \cdot \Big(  b(x,y)\frac{D_{\overline{X^{*i}(y,p)} \cap \partial X}s^*_{p_i}}{|D_{\overline{X^{*i}(y,p)} \cap \partial X}s^*_{p_i}|}\Big) V^{*i}_{\partial} \cdot  \hat n^{*i\partial}  \Big]_{p=D_ys(x,y)},
\end{eqnarray*}
%\marginpar{RM: Doesn't evaluation at $p=D_ys(x,y)$ need to be indicated (or else extra factors of $\|D_y s\|_{C^{k-1,1}}$) in a few of these estimates?}
%\marginpar{BP: I'd meant for this to be encoded in the notation (near the top of page 17).  I've now added a similar definition for starred objects ( page 19). }
define mappings  $A_{p_i}:B_k \rightarrow B_{k-1}$ and
$A_{y_i}:B_k \rightarrow B_{k-1}$ between Banach spaces defined by
$$
B_k:=C^{k,1}(X' \times Y' ) \oplus C^{k,1}([X'\cap \partial X] \times Y' )%B_k:=C^{k,1}(X' \times Y' \times P') \oplus C^{k,1}([X'\cap \partial X] \times Y' \times P')
$$
with norms 
\begin{eqnarray*}
||A_{p_i}||& \leq& ||\frac{1}{|D_{X^i}s_{y_i}|}||_{{C^{k-1,1}(X' \times Y')}}||\hat n^i||_{C^{k-1,1}(X' \times Y')}\\
&+&||\hat n^i||_{{C^{k-1,1}(X' \times Y')}}  ||\hat n_X^i||_{{C^{k-1,1}( (\overline{X'} \cap \partial X)\times Y')}}||V^i_\partial \cdot \hat n^i_\partial||_{{C^{k-1,1}}( (\overline{X'} \cap \partial X) \times Y')}  \\
&+&
 ||\frac{D_{\overline{X^i(y,p)} \cap \partial X}s_{y_i}}{|D_{\overline{X^i(y,p)} \cap \partial X}s_{y_i}|}||_{C^{k-1,1}( (\overline{X'} \cap \partial X) \times Y')} ||V^i_{\partial}||_{{C^{k-1,1}( (\overline{X'} \cap \partial X) \times Y')}}||  \hat n^{i}_{\partial}||_{{C^{k-1,1}( (\overline{X'} \cap \partial X) \times Y')}} 
\end{eqnarray*}
and
\begin{align*}
	||A_{y_i}|| &\leq ||\frac{1}{|D_{X^{*i}}s^*_{p_i}|}||_{{C^{k-1,1}( X' \times Y')}}||\hat n^{*i}||_{C^{k-1,1}( X' \times Y')}+1\\
	+& ||\hat n^{*i}||_{{C^{k-1,1}( X' \times Y')}}  ||\hat n_X^{*i}||_{{C^{k-1,1}( (\overline{X'} \cap \partial X) \times Y')}}||V^{*i}_\partial \cdot \hat n^{*i}_\partial||_{{C^{k-1,1}}( (\overline{X'} \cap \partial X)\times Y')}  \\
	+&
	||\frac{D_{\overline{X^{*i}(y,p)} \cap \partial X}s^*_{p_i}}{|D_{\overline{X^{*i}(y,p)} \cap \partial X}s^*_{p_i}|}||_{C^{k-1,1}( (\overline{X'} \cap \partial X) \times Y')} ||V^{*i}_{\partial}||_{{C^{k-1,1}( (\overline{X'} \cap \partial X) \times Y')}}||  \hat n^{*i}_{\partial}||_{{C^{k-1,1}( (\overline{X'} \cap \partial X)' \times Y')}} 
\end{align*}
controlled by	$ ||D_ys||_{C^{k,1}}$, $||\hat n_X||_{C^{ k,1}}$, non-degeneracy and transversality.

	Furthermore, restricted to the subspace $C^{k,1}(X' \times Y' ) \oplus\{0\}$, the norms
	
\begin{align*}	
||A_{p_i}||&_{C^{k,1}(X' \times Y' ) \oplus\{0\} \rightarrow B_{k-1}}\leq ||\frac{1}{|D_{X^i}s_{y_i}|}||_{{C^{k-1,1}(X' \times Y')}}||\hat n^i||_{C^{k-1,1}(X' \times Y')}\\
&+||\hat n^i||_{{C^{k-1,1}(X' \times Y')}}  ||\hat n_X^i||_{{C^{k-1,1}((\overline{X'} \cap \partial X) \times Y')}}||V^i_\partial \cdot \hat n^i_\partial||_{{C^{k-1,1}}((\overline{X'} \cap \partial X) \times Y')}  
\end{align*}
and
\begin{align*}
||A_{y_i}||&_{C^{k,1}(X' \times Y' ) \oplus\{0\} \rightarrow B_{k-1}} \leq ||\frac{1}{|D_{X^{*i}}s^*_{p_i}|}||_{{C^{k-1,1}( X' \times Y')}}||\hat n^{*i}||_{C^{k-1,1}( X' \times Y')}+1\\
&+ ||\hat n^{*i}||_{{C^{k-1,1}( X' \times Y')}}  ||\hat n_X^{*i}||_{{C^{k-1,1}((\overline{X'} \cap \partial X) \times Y')}}||V^{*i}_\partial \cdot \hat n^{*i}_\partial||_{{C^{k-1,1}}((\overline{X'} \cap \partial X) \times Y')}   
\end{align*}
are controlled by $||D_ys||_{C^{k,1}}$, $||\hat n_X||_{C^{ k-1,1}}$, non-degeneracy and transversality.
\end{corollary}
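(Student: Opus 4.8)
The plan is to observe that $A_{p_i}$ and $A_{y_i}$ are, up to harmless signs on the boundary terms, the operators that differentiate the surface-plus-boundary flux $\Phi_{(a,b)}(y,p):=\int_{X_1(y,p)}a\,d\mathcal{H}^{m-n}(x)+\int_{\overline{X_1(y,p)}\cap\partial X}b\,d\mathcal{H}^{m-n-1}(x)$, via formulas \eqref{eqn: interior p derivative}--\eqref{eqn: boundary y derivative} of Lemma~\ref{lemma: differentiating level set integrals}, and then to read the claimed Banach-space bounds directly off the derivative estimates of Lemma~\ref{lemma: submanifold differentials} by a term-by-term Hölder computation. (The mapping property $A_{p_i},A_{y_i}:B_k\to B_{k-1}$ is exactly what will be needed in order to iterate such differentiations $r$ times later in the section.) What actually has to be shown is only that the displayed formulas for $(a_p^i,b_p^i)$ and $(a_y^i,b_y^i)$ are linear in $(a,b)$ and bounded $B_k\to B_{k-1}$ with the stated operator norms; linearity is manifest from the formulas, so only the norm estimates require work.

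First I would bound $A_{p_i}$ by estimating each summand of $a_p^i$ and $b_p^i$ separately. Each such summand is a product of either a submanifold divergence --- of $a\hat n^i$ along $X^i(y,p)$, or of $b\,D_{\overline{X^i}\cap\partial X}s_{y_i}/|D_{\overline{X^i}\cap\partial X}s_{y_i}|$ along $\overline{X^i(y,p)}\cap\partial X$ --- or a boundary trace of $a$, together with the ``geometric'' factors $\hat n^i$, $\hat n^i_X$, $\hat n^i_\partial$, $V^i_\partial$, $1/|D_{X^i}s_{y_i}|$. Parts (2) and (4) of Lemma~\ref{lemma: submanifold differentials} estimate the two kinds of divergence, the boundary one being the unique term that brings in the top-order normal norm $\|\hat n_X\|_{C^{k,1}}$; part (1), combined with the lower bound on $|D_{X^i}s_{y_i}|$ coming from non-degeneracy via Lemma~\ref{lemma: nondegeneracy on submanifolds}, controls $D_{X^i}s_{y_i}$ and hence $\hat n^i$ and $1/|D_{X^i}s_{y_i}|$ in $C^{k-1,1}$; and the remaining fields $\hat n^i_X$, $\hat n^i_\partial$, $V^i_\partial$ are built from $\hat n^i$ and $\hat n_X$ by orthogonal projection onto $T_xX^i$ and by the normalizations recorded just before the corollary, whose denominators $\sqrt{1-(\hat n^i_X\cdot\hat n^i)^2}$ stay bounded below by the transversality hypothesis. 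Multiplying these Hölder bounds (using that $C^{k-1,1}$ is a Banach algebra), and noting that restriction to the smooth hypersurface $X'\cap\partial X$ and the substitution $p=D_ys(x,y)$ --- a map at least as smooth as everything else present --- preserve the relevant norms, gives exactly the displayed bound on $\|A_{p_i}\|$: the boundary-trace term of $b_p^i$ yields its first summand, the boundary-divergence term its third.

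For $A_{y_i}$ I would repeat this argument verbatim with the partial Legendre dual $s^*(x,p)=\sup_y(y\cdot p-s(x,y))$ in place of $s$: Lemma~\ref{lemma s* smoothness} gives $s^*$ the same smoothness as $s$ with non-degeneracy quantitatively controlled by that of $s$ and $\|s\|_{C^2}$, so the starred fields $\hat n^{*i}$, $V^{*i}_\partial$, $\ldots$ obey the same estimates, while the extra $\partial a/\partial y^i$ (resp.\ $\partial b/\partial y^i$) term contributes only $\|a\|_{C^{k,1}}$ (resp.\ $\|b\|_{C^{k,1}}$), whence the ``$+1$'' in the bound. The refinement is then immediate: on the subspace $C^{k,1}(X'\times Y')\oplus\{0\}$ the boundary-divergence summand of $b_p^i$ and of $b_y^i$ vanishes --- it was the unique summand invoking part (4) of Lemma~\ref{lemma: submanifold differentials}, hence the only one needing $\|\hat n_X\|_{C^{k,1}}$ rather than $\|\hat n_X\|_{C^{k-1,1}}$ --- so the surviving two summands are controlled by $\|D_ys\|_{C^{k,1}}$, $\|\hat n_X\|_{C^{k-1,1}}$, non-degeneracy and transversality. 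The conceptual input being already supplied by Lemmas~\ref{lemma: differentiating level set integrals} and \ref{lemma: submanifold differentials}, I expect the main obstacle to be the careful Hölder-index accounting of the second step: verifying that the geometric factors, assembled from $D_ys$ and $\hat n_X$ by projection, normalization and Legendre duality, lose only the expected amount of regularity and have norms controlled by the four listed quantities --- which hinges entirely on the denominators $|D_{X^i}s_{y_i}|$ and $\sqrt{1-(\hat n^i_X\cdot\hat n^i)^2}$, and their starred analogues, staying uniformly away from zero, i.e.\ precisely on the non-degeneracy and transversality hypotheses.
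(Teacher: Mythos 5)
Your proposal is correct and follows essentially the same route as the paper's (very terse) proof: term-by-term H\"older estimates on each summand, using Lemma \ref{lemma: nondegeneracy on submanifolds} and transversality to keep the denominators away from zero, Lemma \ref{lemma: submanifold differentials} for the submanifold divergences (part (4) being the sole source of the $\|\hat n_X\|_{C^{k,1}}$ dependence, which explains the improved bound on $C^{k,1}(X'\times Y')\oplus\{0\}$), Lemma \ref{lemma s* smoothness} for the starred quantities, and the stability of $C^{k-1,1}$ under products and composition. No gaps worth noting.
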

\begin{proof}
The estimates on the norms follow by simple calculations.  The control on the various quantities in the estimates relies on Lemmas \ref{lemma: nondegeneracy on submanifolds}, \ref{lemma: submanifold differentials}, \ref{lemma s* smoothness}, and closure of the H\"older spaces $C^{k-1,1}$ under composition.
\end{proof}

%[BP: OK, I NEED TO RESUBSTITUTE FOR P, AND CHECK HOW THIS AFFECTS THE ESTIMATES.]

%I think that $||D_{X^i(y,D_ys(x,y))}s_{y_i}(x,y)||_{C^{k-1}}$ is controlled by $||D_ys||_{C^k}$ and nondegeneracy

%So, we get that 
%$$
%||a_p^i||_{C^{k-1}} \leq 
%$$=

 We now prove the result announced at the beginning of this section:
%Theorem \ref{thm: smoothness of G}.

\begin{proof}[Proof Theorem \ref{thm: smoothness of G}] 
First note that as $Q$ enters the definition of of $G_1$ only through the  integrand, whose 
dependence on $Q$ is smooth, computing derivatives with respect to $Q$ is straightforward.
	
Corollary \ref{lemma: iterating derivatives} allows us to iterate derivatives with respect to the other variables; given multi indices $\alpha =(\alpha_1,\alpha_2,...\alpha_n)$, $\beta=(\beta_1,\beta_2,...,\beta_n)$, and $\gamma=(\gamma_1,\gamma_2,....,\gamma_{n^2})$ with $|\alpha|+|\beta|+|\gamma|	=\sum_{i=1}^n \alpha_i +\sum_{i=1}^n \beta_i+\sum_{i=1}^{n^2} \gamma_i  =  k \leq r$, then  Lemma \ref{lemma: differentiating level set integrals} and Corollary \ref{lemma: iterating derivatives} allow us to compute 
\begin{equation}\label{eqn: G_1 derivatives}
\frac{\partial^kG_1}{\partial p^\alpha \partial y^\beta\partial Q^\gamma} =\int_{X_1(y,p)}a^{\alpha,\beta} d\mathcal{H}^{m-n} +\int_{\partial \bar X_1(y,p) \cap \partial X_1}b^{\alpha,\beta} d\mathcal{H}^{m-n-1} 
\end{equation}
where $(a^{\alpha,\beta},b^{\alpha,\beta}) = A^{\alpha}A^{\beta}(\frac{\partial^{|\gamma|} h}{\partial Q^\gamma },0) \in B_{r-k}$, with $h(x,y,p) = \frac{\det [Q-D^2_{yy} s(x,y)]}{\sqrt{\det D^2_{xy} s(x,y) (D^2_{xy} s(x,y))^T} } 
f(x) $ being the original integrand in the definition of $G_1(y,p,Q)$, and $A^{\alpha} =A_{p_1}^{\alpha_1}....A_{p_n}^{\alpha_n}$, $A^{\beta} =A_{y_1}^{\beta_1}....A_{y_n}^{\beta_n}$. %
%\marginpar{Do the various $A_{p_i}$ and $A_{y_i}$ commute? I guess it doesn't really matter as we can choose the order we want...\\ RM: agreed}    
Now, Corollary \ref{lemma: iterating derivatives} implies that $||(a^{\alpha,\beta},b^{\alpha,\beta})||_{C^{r-k,1}}$ is controlled by $||f||_{C^{r,1}},  ||D_ys||_{C^{r+1,1}}$,  $||\hat n_X||_{C^{r-1,1}}$, non-degeneracy and transversality.

It then follows from \eqref{eqn: G_1 derivatives} that $\frac{\partial ^kG_1}{\partial p^\alpha \partial y^\beta\partial Q^\gamma}$ is controlled by the quantities  listed in the statement of the present theorem for
 $k \leq r$, as desired.
\end{proof}

%\marginpar{RM: We need derivatives of $G$ wrt $Q$ also,  but I guess these are straightforward}
%\marginpar{BP: See updates.}
\section{Smoothness of the local operator $G_2$ for one dimensional targets}
\label{S:G2}

Taken together, the two preceding sections allow one to bootstrap from $C^{2,\alpha}$ to higher regularity, when $X_2 =X_1$.  This raises  the following natural questions:

\begin{enumerate}
	\item When $X_2$ and $X_1$ differ (in which case the results in the previous subsection do not tell us much about solutions of the $i=2$ equation), under what conditions is the elliptic operator $G_2$ smooth?
	\item When can we confirm solutions are $C^{2,\alpha}$, allowing one to apply Theorem \ref{thm: bootstrapping}?
\end{enumerate}

The goal of this section and the next is to fill these gaps for one dimensional targets, $n=1$.  In this section, we identify conditions under which $G_2$ is smooth.  As in the previous section, where regularity of $G_1$ for higher dimensional targets was considered, the general strategy is to adapt the approach in \cite{ChiapporiMcCannPass17}, using the divergence theorem to convert integrals over regions to those over boundaries, and differentiating the latter using the calculus of moving boundaries.  These results, combined with general ODE theory, imply that $C^{1,1}_{loc}$ solutions to the $i=2$ equation are in fact $C^{2,1}_{loc}$; higher order regularity estimates on $G_2$ in turn yield higher order regularity of these solutions.

The second question above is deferred to  Section \ref{S:ODE}, 
where we find conditions under which any almost everywhere solution to the $i=2$ equation with the one dimensional targets is locally $C^{1,1}$; the results of the present section then imply that these solutions are smoother, depending on the degree of regularity of $G_2$.

%\subsection{Smoothness of the operator $G_2$}
%Goal: encompass and improve on the $v \in C^{2,1}_{loc}$ theory from CPAM,  
%replacing the implicit function theorem with an ODE approach; unfortunately, at present
%it fails to encompass as it still requires CPAM (hence nestedness) 
%to get $v \in W^{2,1+}$ (i.e. force $v''$ to be acwrt Lebesgue). Doesn't Helena say every $L^1$ function
%is in $L^p$ for some $p>1$?  However, even  the jump from $W^{2,1+}$ to $W^{3,\infty}_{loc}$ is impressive
%relative to the situation in higher dimensions.
 
%\marginpar{\re $X_2$?}
Given regions $Y'$, $P'$ and $Q'$ in $\mathbf{R}$, we set $X' =\cup_{(y,p, q) \in Y' \times P'\times Q'} X_{2}(y,p,q)$
Assume $D_x s_y$ and $D_x s_{yy}$ are linearly independent throughout $X' \times Y' \subseteq X \times Y$.

As $p$ is increased,
the domain $W_\le (y,p) := \{x \in X \mid s_y \le p \}$ expands monotonically outward with 
normal velocity $w(x,y):=|D_x s_y|^{-1}$ along its interface 
$W_= = X_1$.   Its normal velocity with respect to changes in $y$ is $-s_{yy} w$. 
Similarly, as $q$ is increased $Z_\le (y,q) := \{x\in X \mid s_{yy} \le q\}$
expands monotonically outward with normal velocity $z(x,y) := |D_x s_{yy}|^{-1}$ along its 
interface $Z_=(y,q) := \{x \in X \mid s_{yy}=q\}$; its  normal velocity with respect to changes in $y$ is $-s_{yyy} z$.  Our linear independence assumption
guarantees these velocities are finite and $W_=$ intersects $Z_=$ transversally.
% to form the manifold boundary of
Notice $X_2(y,p,q) = W_=(y,p) \cap Z_\le(y,q)$.  Assume also, in the same region of interest,
that both $\overline{W_=\cap Z_\le}$ and $\overline{W_= \cap Z_=}$ intersect $\p X$ transversally.
We denote by $\hat n_W = w D_x s_y$ and $\hat n_Z =z D_x s_{yy}$ the outer normals to 
$W_\le$ and $Z_\le$ respectively, and observe that the frontier of e.g. $W_\le$ moves
with velocity $w/\sin \theta$ in $Z_=$,  when $\hat n_Z \cdot \hat n_W = \cos \theta$.

Our main theorem of this section is the following.

%\marginpar{\re Is it really only $\sup$s? here $\inf |D_x s_{yy}|$ appears}

\begin{theorem}[Smoothness of the ODE given by $G_2$]
\label{thm: smoothness of G_2}
 If $n=1$ and $r \ge 0$ is an integer, then
	$||G_2||_{C^{r,1}(Y' \times P' \times Q')}$ is controlled by $||f||_{C^{r,1}(X')}$, $||s_y||_{C^{r+2,1}(Y' \times X')}$, $||\hat n_ X||_{ C^{r-1,1}}$ and
	\begin{eqnarray}
	\inf_{(x,y) \in X' \times Y'} \min\{|D_xs_y(x,y)|,|D_xs_{yy}(x,y)| \} &\quad& \mbox{\rm (non-degeneracy),} \\
	\inf_{(x,y) \in X' \times Y'} 1-(\hat n_W \cdot \hat n_Z)^2 &\quad& \mbox{\rm ( transversality),}
	\\ \inf_{(x,y) \in (\partial X\cap \bar X') \times Y',\text{ } \lambda_1^2+\lambda_2^2 +\lambda_3^2 =1} |\lambda_1\hat n_W +\lambda_2\hat n_Z +\lambda_3\hat n_X| && {\rm ( linear\ independence),}\label{eqn: linear independence}
%\marginpar{Adapt linear independent to 2d}
	\\ \sup_{(y,p,q) \in Y' \times P' \times Q'} \mathcal H^{m-1}(W_=(y,p) \cap Z_\le(y,q)  )
&& \mbox{\rm (1st level set size),}
	\\ \sup_{(y,p,q) \in Y' \times P' \times Q'} \mathcal H^{m-1}(W_\le(y,p) \cap Z_=(y,q)  )
&& \mbox{\rm (2nd level set size),}
	\\ \sup_{(y,p,q) \in Y' \times P' \times Q'} \mathcal H^{m}(W_\le(y,p) \cap Z_\le(y,q)  )
&& \mbox{\rm (iterated sublevel size),}
	\\ \sup_{(y,p,q) \in Y' \times P' \times Q'} \mathcal H^{m-2}((\overline{W_=(y,p) \cap Z_\le(y,q)}) \cap \partial X) && 
	\mbox{\rm (1st boundary level size),}
	\\ \sup_{(y,p,q) \in Y' \times P' \times Q'} \mathcal H^{m-2}((\overline{W_\le(y,p) \cap Z_=(y,q)}) \cap \partial X) && 
	\mbox{\rm (2nd boundary level size), and}
	\\ \sup_{(y,p,q) \in Y' \times P' \times Q'} \mathcal H^{m-1}((\overline{W_\le(y,p) \cap Z_\le(y,q)}) \cap \partial X) && 
	\mbox{\rm (boundary sublevels size),}
	\end{eqnarray}
	assuming finiteness and positivity of each quantity above.
\end{theorem}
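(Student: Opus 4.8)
The plan is to adapt, essentially line for line, the proof of Theorem~\ref{thm: smoothness of G}, replacing the level sets $X_1(y,p)$ there by the domains $X_2(y,p,q) = W_=(y,p)\cap Z_\le(y,q)$ here. Two features distinguish the present situation. First, $X_2$ is a region \emph{with boundary} inside the hypersurface $W_=$, its relative boundary consisting of a ``$Z$-frontier'' $W_=\cap Z_=$ and a ``$\partial X$-frontier'' $\overline{W_=\cap Z_\le}\cap\partial X$; differentiating $G_2$ in $p$, $q$ or $y$ will, via the divergence theorem, also bring in integrals over the auxiliary region $W_\le\cap Z_\le$ (dimension $m$), the auxiliary surface $W_\le\cap Z_=$ (dimension $m-1$), and their traces on $\partial X$. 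Second, and crucially, the integrand
$$
h(x,y,q) := \frac{(q - s_{yy}(x,y))\,f(x)}{|D_x s_y(x,y)|}
$$
vanishes identically on $Z_= = \{x: s_{yy}(x,y)=q\}$, which forces the moving $Z$-frontier to contribute nothing at first order; this is exactly why the sizes of $W_=\cap Z_=$ and of $\overline{W_=\cap Z_=}\cap\partial X$ do not appear among the hypotheses, leaving only the six strata whose Hausdorff measures do. It then suffices, exactly as in Theorem~\ref{thm: smoothness of G}, to show that each mixed derivative $\partial^k G_2/\partial y^a\partial p^b\partial q^c$ with $k=a+b+c\le r$ can be written as an integral over these six strata of functions whose $C^{r-k,1}$ norm is controlled by the listed data.

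First I would record the geometric preliminaries, in direct parallel with Lemmas~\ref{lemma: transversality of submanifolds}, \ref{lemma: nondegeneracy on submanifolds} and~\ref{lemma: submanifold differentials}. Linear independence of $D_x s_y$ and $D_x s_{yy}$ makes $W_=$ and $Z_=$ smooth hypersurfaces of $X$ meeting transversally, so $W_=\cap Z_=$ is a codimension-two submanifold; the transversality hypothesis $1-(\hat n_W\cdot\hat n_Z)^2>0$ bounds the normal velocity of $W_=$ within $Z_=$ and vice versa, while the linear independence hypothesis~\eqref{eqn: linear independence} of $\{\hat n_W,\hat n_Z,\hat n_X\}$ on $\partial X$ guarantees that $W_=$, $Z_=$ and $W_=\cap Z_=$ all meet $\partial X$ transversally, so the three $\partial X$-strata are manifolds-with-corners of the stated dimensions and the associated boundary velocities are bounded. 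Non-degeneracy bounds the normal velocities $w=|D_x s_y|^{-1}$ and $z=|D_x s_{yy}|^{-1}$ of $W_=$ and $Z_=$. I would then prove the $n=1$ analog of Lemma~\ref{lemma: submanifold differentials}: the differential along any of $W_=$, $Z_=$, $W_=\cap Z_=$ or their $\partial X$-boundaries of a $C^{k+1,1}$ function, and the surface divergence of a $C^{k+1,1}$ vector field, have $C^{k,1}$ norm controlled by $\|s_y\|_{C^{k+2,1}}$, $\|\hat n_X\|_{C^{k,1}}$, non-degeneracy and transversality; the extra derivative of $s_y$ relative to the $G_1$ case (here $k+2$ rather than $k+1$) is carried by $\hat n_Z = z\,D_x s_{yy}$ and by $s_{yyy}$. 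I would also carry over verbatim the flux-differentiation formulas of \cite[Lemma~5.1]{ChiapporiMcCannPass17} for the moving surfaces $W_=$ and $Z_=$ and their traces on $\partial X$, and invoke the generalized divergence theorem \cite[Proposition~27]{HofmannMitreaTaylor10}.

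Next I would establish the analog of Lemma~\ref{lemma: differentiating level set integrals}: formulas expressing $\partial_p$, $\partial_q$ and $\partial_y$ of (i) a volume integral over $W_\le\cap Z_\le$, (ii) a surface integral over $W_=\cap Z_\le$ or over $W_\le\cap Z_=$, and (iii) a boundary integral over one of the three $\partial X$-strata, as sums of integrals of these same six types, obtained by converting surface integrals to volume form with \cite[Proposition~27]{HofmannMitreaTaylor10} and differentiating with the flux formulas. In each such formula the term supported on the moving $Z$-frontier $W_=\cap Z_=$ carries the running integrand (or one of its boundary traces) as a factor, hence vanishes whenever that integrand retains the factor $(q-s_{yy})$ --- in particular for $G_2$ itself, and for its $\partial_y$ derivative, whose $Z$-frontier term is weighted by $s_{yyy}$ and again vanishes --- so the calculus never creates an integral over $W_=\cap Z_=$ or $\overline{W_=\cap Z_=}\cap\partial X$. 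Packaging the outputs, I would define Banach spaces $B_k$ as the direct sum of six copies of $C^{k,1}$, one per stratum (with the $p$-dependence eliminated by substituting $p=D_y s(x,y)$ and $q$ retained as a free parameter, exactly as for the two-summand space of Corollary~\ref{lemma: iterating derivatives}), and operators $A_p,A_q,A_y : B_k\to B_{k-1}$ built from the above formulas; the analog of Corollary~\ref{lemma: iterating derivatives} bounds their operator norms by $\|s_y\|_{C^{k+2,1}}$, $\|\hat n_X\|_{C^{k,1}}$, non-degeneracy and transversality.

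Finally, since $(h,0,0,0,0,0)\in B_r$ with norm controlled by $\|f\|_{C^{r,1}(X')}$, $\|s_y\|_{C^{r+1,1}(Y'\times X')}$ and non-degeneracy, iterating as in the proof of Theorem~\ref{thm: smoothness of G} expresses $\partial^k G_2/\partial y^a\partial p^b\partial q^c$, for $k=a+b+c\le r$, as a sum over the six strata of the components of $A_y^{a}A_p^{b}A_q^{c}(h,0,0,0,0,0)\in B_{r-k}$; its norm is controlled by the listed data, and multiplying by the corresponding Hausdorff-measure bounds for the strata gives the asserted bound on $\|G_2\|_{C^{r,1}(Y'\times P'\times Q')}$. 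The main obstacle is the bookkeeping in the differentiation lemma --- verifying that $A_p,A_q,A_y$ genuinely close on the six-component space $B_k$ and that no surviving term is supported on $W_=\cap Z_=$ or $\overline{W_=\cap Z_=}\cap\partial X$, which rests on the vanishing of $(q-s_{yy})f$ and its boundary traces along $Z_=$, and that the repeated applications of the divergence theorem on $W_=$, $Z_=$ and their intersections with $\partial X$ produce only the six strata present in the statement --- the point at which the triple-transversality hypothesis~\eqref{eqn: linear independence} is indispensable.
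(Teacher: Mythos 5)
Your overall architecture matches the paper's: the same six strata, flux-differentiation via \cite[Lemma 5.1]{ChiapporiMcCannPass17} and the generalized divergence theorem, iterated operators acting on a direct sum of $C^{k,1}$ spaces, and final bounds by the listed Hausdorff measures and integrand norms. But the step you single out as "the main obstacle" --- that the calculus never creates integrals over $W_=\cap Z_=$ or $\overline{W_=\cap Z_=}\cap\partial X$ because the running integrand retains the factor $(q-s_{yy})$ --- is false beyond the first differentiation, and this is a genuine gap. The factor $(q-s_{yy})$ is consumed after one derivative: for instance $\partial_q G_2=\int_{W_=\cap Z_\le} f/|D_x s_y|\,d\mathcal H^{m-1}$, whose integrand no longer vanishes on $Z_=$, so that
\begin{equation*}
\partial_q^2 G_2 \;=\; \int_{W_=\cap Z_=} \frac{f\,z}{|D_x s_y|\sqrt{1-(\hat n_W\cdot\hat n_Z)^2}}\, d\mathcal H^{m-2}
\end{equation*}
is a genuinely nonvanishing codimension-two frontier integral (and analogous terms arise in every mixed derivative of order $\ge 2$, as the formulas for $A_q$, $A_y$, $C_p$, $A^\partial_q$, etc.\ in Lemma \ref{lemma: derivatives of X_2 integrals} and Appendix \ref{appendix: derivatives} show). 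Since your hypotheses include no bound on $\mathcal H^{m-2}(W_=\cap Z_=)$ or $\mathcal H^{m-3}(\overline{W_=\cap Z_=}\cap\partial X)$, your six-component space does not close under the differentiation operators as you have set them up, and the induction for $r\ge 2$ breaks down. (Your parenthetical that the $\partial_y$ frontier term vanishes "because it is weighted by $s_{yyy}$" is also off: in the paper's formula it vanishes only because the factor $a$ does, which again fails after the first derivative.)

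The missing idea, which is how the paper closes the loop, is to eliminate the two lowest-dimensional strata \emph{after} they appear, by a further application of the divergence theorem \emph{within} the submanifold $W_=$ (respectively within $\overline{W}_=\cap\partial X$): every integral over $W_=\cap Z_=$ is rewritten as an integral over $W_=\cap Z_\le$ of a tangential divergence $\nabla_{W_=}\cdot(\,\cdot\,\hat n_{W_=,Z})$ minus a boundary integral over $(\overline{W_=\cap Z_\le})\cap\partial X$, and every integral over $\overline{W_=\cap Z_=}\cap\partial X$ is rewritten as an integral over $(\overline{W_=\cap Z_\le})\cap\partial X$; here $\hat n_{W_=,Z}$ and $\hat n_{W_=,X}$ are the in-surface normals, whose size and $C^{k,1}$ control is exactly where the transversality and triple linear-independence quantities enter. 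With this conversion the operators $M^p$, $M^q$, $M^y$ of Lemma \ref{lemma: differentiating i=2 level set integrals} genuinely map the six-component space $B_r\times B_r\times B_r$ into itself (one derivative lower), each application costing one derivative of $s_y$ and of $\hat n_X$, and only then does the iteration argument you describe go through with precisely the nine quantities listed in the statement. So your proposal needs this additional reduction step; without it the proof as written does not establish the theorem for $r\ge 2$.
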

 \begin{remark}
		When $m=2$, the linear independence assumption \eqref{eqn: linear independence} cannot hold.  It can be replaced by the assumption that $\{x \in \bar X: (s_y(x,y), s_{yy}(x,y)) \in P' \times Q'\}$ does not intersect $\partial X$ for any $y \in Y'$; in this case, quantity  \eqref{eqn: linear independence} should be replaced by the pairwise linear independence quantities
		$$
			\inf_{(x,y) \in (\bar X'\cap\partial X) \times Y'} 1-(\hat n_W \cdot \hat n_X)^2
		$$
		and 
			$$
		\inf_{(x,y) \in (\bar X'\cap\partial X) \times Y'} 1-(\hat n_X \cdot \hat n_Z)^2.
		$$
\end{remark}
%\marginpar{\re Adapt linear independence to 2d}

\begin{lemma}[Derivatives on moving submanifolds-with-boundary]
\label{lemma: derivatives of X_2 integrals}
	Given real-valued Lipschitz functions $a,b,c$ on $X' \times Y' \times P' \times Q',$ and $a^\partial,b^\partial, c^\partial$ on  $\p X' \times Y' \times P' \times Q',$ the functions
	\begin{eqnarray*}
	A(y,p,q)&:= &\int_{X_2(y,p,q)} a(x,y,p,q) d{\mathcal H}^{m-1}(x),\\
	B(y,p,q)&:= &\int_{W_\leq(y,p) \cap Z_\leq(y,q)} b(x,y,p,q) d{\mathcal H}^{m}(x),\\
		C(y,p,q)&:=& \int_{W_\le(y,p) \cap  Z_=(y,q)} c(x,y,p,q) d{\mathcal H}^{m-1}(x),\\
	A^\p(y,p,q)&:= &\int_{(\overline{W_=(y,p)\cap  Z_\leq(y,q)})\cap \partial X} a^\p(x,y,p,q) d{\mathcal H}^{m-2}(x),\\
	B^\p(y,p,q)&:= &\int_{(\overline{W_\le(y,p)\cap  Z_\leq(y,q)})\cap \partial X} b^\p(x,y,p,q) d{\mathcal H}^{m-1}(x)\text{ and}\\
	C^\p(y,p,q)&:= &\int_{(\overline{W_\le(y,p)\cap  Z_=(y,q)})\cap \partial X} c^\p(x,y,p,q) d{\mathcal H}^{m-2}(x)
	\end{eqnarray*}
are all Lipschitz, with derivatives given almost everywhere  by
the formulae in Appendix \ref{appendix: derivatives}.  Here  
$w:=|D_x s_y|^{-1}$ and $z:= |D_x s_{yy}|^{-1}$ as above.
\end{lemma}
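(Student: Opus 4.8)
\emph{Overview.} The plan is to follow the template of Lemma~\ref{lemma: differentiating level set integrals} and its antecedent, Lemma~5.1 of \cite{ChiapporiMcCannPass17}: combine the generalized divergence theorem of \cite[Proposition 27]{HofmannMitreaTaylor10} with the calculus of moving boundaries, and then reduce from $C^{1,1}$ to merely Lipschitz integrands by a density argument.

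\emph{Step 1: reduction to smooth integrands.} The formulae to be recorded in Appendix~\ref{appendix: derivatives} will involve only the $C^{0,1}$ norms of $a,b,c$ (respectively $a^\p,b^\p,c^\p$), never second derivatives, since a derivative falling on the integrand appears undifferentiated in the surface and boundary terms. Hence it suffices to prove the identities for $C^{1,1}$ integrands: one approximates a Lipschitz integrand by $C^{1,1}$ functions converging to it in $C^{0,1}$, establishes the formulae for the approximants, and passes to the limit using dominated convergence together with the bounded-velocity estimates furnished by transversality. Lipschitz dependence of $A,\dots,C^\p$ on $(y,p,q)$ then follows because the right-hand sides are uniformly bounded in terms of the quantities listed in the theorem.

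\emph{Step 2: the bulk integrals $B$ and $B^\p$.} These integrate a smooth density over the moving domain $W_\le(y,p)\cap Z_\le(y,q)$ (of dimension $m$) and over its trace on $\p X$ (of dimension $m-1$). Applying the Reynolds transport theorem, $\p_p B$ picks up a surface term over the only part of the frontier that moves, namely $W_=(y,p)\cap Z_\le(y,q)$, with normal velocity $w=|D_x s_y|^{-1}$ along $\hat n_W$, plus $\int\p_p b$; the portion of the frontier lying on $\p X$ is stationary and contributes nothing. Similarly $\p_q B$ produces a term on $W_\le\cap Z_=(y,q)$ with velocity $z=|D_x s_{yy}|^{-1}$, while $\p_y B$ produces both, with velocities $-s_{yy}w$ on $W_=$ and $-s_{yyy}z$ on $Z_=$ (the three normal velocities recorded in the discussion preceding the lemma), plus $\int\p_y b$. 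Transversality of $W_=$, $Z_=$ and $\p X$ guarantees that all of these interfaces are genuine $C^{0,1}$ hypersurfaces of the stated dimensions, and that $w$, $z$ and their products with $s_{yy},s_{yyy}$ remain bounded. The same scheme applied to $W_\le\cap Z_\le\cap\p X$, with $\p X$ stationary and the codimension-two corners $\overline{W_=\cap Z_\le}\cap\p X$ and $\overline{W_\le\cap Z_=}\cap\p X$ playing the role of the moving frontier, yields the formulae for $\p B^\p$.

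\emph{Step 3: the hypersurface integrals $A$, $C$, $A^\p$, $C^\p$.} Here $X_2(y,p,q)=W_=(y,p)\cap Z_\le(y,q)$ is a submanifold-with-boundary of dimension $m-1$ inside the hypersurface $W_=(y,p)$, and Reynolds's theorem on $W_=$ alone does not apply because $W_=$ itself moves as $(y,p)$ varies. The device, exactly as in the proof of Lemma~\ref{lemma: differentiating level set integrals}, is to use the generalized divergence theorem to rewrite $\int_{X_2}a\,d\mathcal H^{m-1}$ as the sum of a bulk integral of $\nabla_{W}\cdot(a\,\hat n_W)$ over the $m$-dimensional region $W_\le(y,p)\cap Z_\le(y,q)$ minus a reference copy, a surface integral over a fixed slice, and a correction over the $\p X$-part; one differentiates the bulk term by Step~2 and the flux formula \eqref{eqn: flux differentiation}--\eqref{eqn: boundary flux differentiation}, and finally re-applies the divergence theorem to collapse the answer back to an integral over $X_2$ and its $(m-2)$-dimensional boundary $\overline{W_=\cap Z_=}\cap\p X$. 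The $q$- and $y$-derivatives are obtained the same way after swapping the roles of $s_y$ and $s_{yy}$, respectively after composing with the Legendre-type reparametrization of the level sets used for the starred objects in Section~\ref{S:G1=G2}; $A^\p$ and $C^\p$ are handled by the identical argument performed intrinsically on the hypersurface $\p X$.

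\emph{Expected main obstacle.} The real work is the bookkeeping of the nested frontier structure: $X_2$ is a manifold-with-boundary whose boundary splits into the interior interface $W_=\cap Z_=$ and the outer part on $\p X$, and under a parameter change each of the three codimension-one objects $W_=$, $Z_=$, $\p X$ moves (or stays fixed) with its own normal velocity, meeting the others in codimension-two corners whose velocities are controlled precisely by the transversality and linear-independence hypotheses. Assembling these contributions with the correct signs, and checking that the resulting expressions depend only on the $C^{0,1}$ data so that the approximation of Step~1 closes, is the delicate part; the analytic inputs --- the flux differentiation formulae and the Hofmann--Mitrea--Taylor divergence theorem --- are already available.
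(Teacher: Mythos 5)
Your plan is essentially the paper's proof: for $C^{1,1}$ integrands one rewrites the hypersurface integrals via the generalized divergence theorem as a bulk integral over $W_\le\cap Z_\le$ plus boundary integrals over $W_\le\cap Z_=$ and $\overline{W_\le\cap Z_\le}\cap\p X$, differentiates each term in $(y,p,q)$ using Lemma 5.1 of \cite{ChiapporiMcCannPass17} (picking up the corner terms weighted by the transversality factors), and then passes to merely Lipschitz integrands by $C^{0,1}$-approximation and dominated convergence, since the resulting formulae involve only first derivatives of the integrand. The only cosmetic deviations are that no reference slice is needed in this setting, and the paper obtains the $y$-derivatives directly from the normal velocities $-s_{yy}w$ of $W_=$ and $-s_{yyy}z$ of $Z_=$ (together with the variation of $\hat n_W$, $\hat n_Z$) rather than through a Legendre-type reparametrization.
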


\begin{proof}
	The proof is similar to the proofs of Lemma 7.4 in \cite{ChiapporiMcCannPass17} and Lemma \ref{lemma: differentiating level set integrals} in the present paper; we only described the main differences here.  For a sufficiently smooth integrand, the derivative of $A$ with respect to $p$, for example, includes a term capturing differentiation of the integrand with respect to $a$, and a term capturing the dependence of the region of integration, which we compute using the generalized divergence theorem and Lemma 5.1 in \cite{ChiapporiMcCannPass17}:
	
	\begin{eqnarray*}
		A_p - &&\int_{X_2} a_p d{\mathcal H}^{m-1} = \frac{\p}{\p \tilde p}\bigg|_{\tilde p= p} \int_{W_=(y,\tilde p) \cap Z_\le(y,q)} 
		a(x,y,p,q) \hat n_W \cdot \hat n_W d{\mathcal H}^{m-1}(x)
		\\ &=&  \frac{\p}{\p \tilde p}\bigg[ \int_{W_\le \cap Z_\le} \nabla \cdot (a\hat n_W)  d{\mathcal H}^{m}-
		\int_{W_\le \cap Z_=} a \hat n_W \cdot \hat n_Z d{\mathcal H}^{m-1}
		- \int_{\overline{{W_\le \cap Z_\le}} \cap \p X} a \hat n_W \cdot \hat n_X d{\mathcal H}^{m-1}
		\bigg]_{\tilde p=p}
		\\ &=& \int_{W_= \cap Z_\le} \nabla \cdot (a\hat n_W) w d{\mathcal H}^{m-1}
		- \int_{W_=\cap Z_=}  \frac{aw \hat n_W \cdot \hat n_Z}{\sqrt{1 - (\hat n_W \cdot \hat n_Z)^2}} d{\mathcal H}^{m-2}
		\\ && - \int_{\overline{{W_= \cap Z_\le}} \cap \p X} \frac{aw \hat n_W \cdot \hat n_X}{\sqrt{1 - (\hat n_W \cdot \hat n_X)^2}}d{\mathcal H}^{m-2}.
	\end{eqnarray*}
The result for Lipschitz functions can be obtained as in Lemma 7.4 in \cite{ChiapporiMcCannPass17} and Lemma \ref{lemma: differentiating level set integrals} here, via approximation by $C^{1,1}$ integrands and the dominated convergence theorem.  The arguments for other derivatives of $A$, $B$ and $C$ are similar.
	We treat boundary integrals analogously.  Noting that, for instance, 
\begin{eqnarray*}
	A^\partial(y,p,q) &=&\int_{(\overline{W_= \cap Z_{\leq})}\cap \partial X)} a^\p(x,y,p,q) d{\mathcal H}^{m-2}(x)\\
	&=&\int_{(\overline{W_\leq \cap Z_{\leq})}\cap \partial X)} \nabla_{\partial X} \cdot (a^\p \hat n_{\partial, W})d{\mathcal H}^{m-1}(x) -\int_{(\overline{W_\le \cap Z_{=})}\cap \partial X)} a^\p\hat n_{\partial, W}\cdot \hat n_{\partial, Z} d{\mathcal H}^{m-2}(x),
\end{eqnarray*}
where  $\hat n_{\partial, W}$ and $\hat n_{\partial, Z}$ are defined in Appendix \ref{appendix: derivatives}, we can again use Lemma 5.1 in \cite{ChiapporiMcCannPass17} to differentiate with respect to $p$.  Similar arguments apply to all derivatives of $A^\p$, $B^\p$ and $C^\p$.
\end{proof}

We note that all integrals over the domain $W_=\cap Z_=$ can be rewritten using the divergence theorem as follows:
%\marginpar{\re Why change from $a$ to $\alpha$?}
\begin{eqnarray*}
\int_{W_=\cap Z_=}a(x,y,p,q)d\mathcal H^{m-2}(x)& =&\int_{W_= \cap Z_\le}\nabla_{W_=} \cdot(\alpha \hat n_{{W_=, Z}})d\mathcal H^{m-1}(x) \\
&-& \int_{(\overline{W_= \cap Z_\le})\cap \partial X}a \hat n_{W_=,Z}\cdot \hat n_{W_=,X}d\mathcal H^{m-2}(x)
\end{eqnarray*}
where  $\hat n_{W_=, Z} := \frac{\hat n_Z -(\hat n_Z \cdot \hat n_W)\hat n_W }{\sqrt{1-(\hat n_Z \cdot \hat n_W)^2}}$ and $\hat n_{W_=, X} := \frac{\hat n_X -(\hat n_X \cdot \hat n_W)\hat n_W }{\sqrt{1-(\hat n_X \cdot \hat n_W)^2}}$ are the outward unit normals in the submanifold $W_= \subseteq X$ to $Z_\le$ and $X$, respectively.

Similarly, 

\begin{eqnarray*}
	\int_{(\overline{W_=\cap Z_=})\cap \partial X}a(x,y,p,q)d\mathcal H^{m-3}(x)& =&\int_{(\overline{W_=\cap Z_\le})\cap \partial X}\nabla_{\bar W_= \cap \partial X} \cdot(a\hat n_{{\bar W_= \cap \partial X, Z}})d\mathcal H^{m- 2}(x),
\end{eqnarray*}
where $\hat n_{{\bar W_= \cap \partial X, Z}}$ is the outward unit normal to $\bar Z_\leq \cap(\bar W_= \cap \partial X)$ in the codimension 2 submanifold $(\bar W_= \cap \partial X)$; alternatively, it is equal to $\hat n_Z$, minus its projection onto the span of $\hat n_{X}$ and $\hat n_W$.
This means that differentiating a function of any of the types in Lemma \ref{lemma: derivatives of X_2 integrals} with respect to any of $p,q$ or $y$ results in a sum of functions of these same types; we can therefore iterate these operations to compute higher order derivatives.  The following Lemma keeps track of the effect on regularity of differentiating the sum of the terms in Lemma \ref{lemma: derivatives of X_2 integrals}.

%\marginpar{\re $B^K \to B_r$ and $C^{k,1} \to C^{r,1}$ etc.?}

\begin{lemma}[More iterated derivative bounds]
\label{lemma: differentiating i=2 level set integrals}	
%We consider operators on function spaces over:
Set 
$$
B_r :=C^{r,1}(\bar X' \times \bar Y' \times \bar P' \times \bar Q')\times C^{r,1}((\bar X' \cap \partial X)  \times \bar Y' \times \bar P' \times \bar Q')
$$
%\begin{eqnarray*}
%B^K &:=&C^{k,1}(W_= \cap Z_{\leq}) \times C^{k,1}( W_\leq \cap Z_{=}) \times C^{k,1}( W_\leq \cap Z_{\leq}) \times C^{k,1}((\overline{W_= \cap Z_{\leq})}\cap \partial X) \\  &&\times C^{k,1}((\overline{W_\leq \cap Z_{=})}\cap \partial X) \times C^{k,1}( (\overline{W_\leq \cap Z_{\leq})}\cap \partial X), 
%\end{eqnarray*}
and consider the operators $M^p, M^q, M^y: (B_r)^3 \rightarrow (B_{r-1})^3$ defined by
\begin{eqnarray*}
M^p:&&(a,b,c,a^\partial, b^\partial, c^\partial) \mapsto \\
&&\Big(a_p+\nabla \cdot (a\hat n_W)w+ bw -  \nabla_{W_=} \cdot (\frac{aw \hat n_W \cdot \hat n_Z}{\sqrt{1 - (\hat n_W \cdot \hat n_Z)^2}} \hat n_{W_=,Z}) + \nabla_{W_=} \cdot (cw \hat n_{W_=,Z}) , b_p,c_p,\\
&&-\frac{aw \hat n_W \cdot \hat n_X}{\sqrt{1 - (\hat n_W \cdot \hat n_X)^2}}+ a^\partial_p + \frac{w}{\sqrt{1-(\hat n_W \cdot \hat n_X)^2}} \nabla_{\partial X} \cdot (a^\partial \hat n_{\partial, W})+\frac{b^\partial w}{\sqrt{1-(\hat n_W \cdot \hat n_X)^2}}\\
&& - \frac{aw \hat n_W \cdot \hat n_Z}{\sqrt{1 - (\hat n_W \cdot \hat n_Z)^2}}\hat n_{W_=,Z} \cdot \hat n_{W_=,X} +cw\hat n_{W_=,Z} \cdot \hat n_{W_=,X}\\
&& - \nabla_{\bar W_= \cap \partial X}\cdot ( a^\partial \hat n_{\partial, W}\cdot \hat n_{\partial, Z}\frac{w}{\sqrt{[1-(\hat n_W \cdot \hat n_X)^2][1-(\hat n_W \cdot \hat n_Z)^2]}} \hat n_{\bar W_= \cap \partial X,Z})\\
&&+ \nabla_{\bar W_= \cap \partial X}\cdot ( c \frac{w}{\sqrt{[1-(\hat n_W \cdot \hat n_X)^2][1-(\hat n_Z \cdot \hat n_W)^2]}} \hat n_{\bar W_= \cap \partial X,Z}), b^\partial_p, c_p^\partial\Big),
\end{eqnarray*}

\begin{eqnarray*}
M^q:&&(a,b,c,a^\partial, b^\partial, c^\partial) \mapsto \\
&&\Big(a_q +\nabla_{W_=}\cdot(\frac{az}{\sqrt{1 - (\hat n_Z \cdot \hat n_W)^2}}\hat n_{W_=,Z}) -\nabla_{W_=}\cdot (\frac{cz\hat n_Z \cdot \hat n_W}{{\sqrt{1 - (\hat n_W \cdot \hat n_Z)^2}}}\hat n_{W_=,Z}),\\
&&b_q,bz+c_q+\nabla \cdot (c\hat n_Z)z,\\
&&a^\p_q-\frac{az}{\sqrt{1 - (\hat n_Z \cdot \hat n_W)^2}}\hat n_{W_=,Z}\cdot \hat n_{W_=,X} +\frac{cz\hat n_Z \cdot \hat n_W}{{\sqrt{1 - (\hat n_W \cdot \hat n_Z)^2}}}\hat n_{W_=,Z}\cdot \hat n_{W_=,X} \\
&&-\nabla_{\overline{W_=}\cap \partial X} \cdot( c^\p\hat n_{\partial, Z}\cdot \hat n_{\partial, W}\frac{z}{\sqrt{[1-(\hat n_Z \cdot \hat n_X)^2][1-(\hat n_Z \cdot \hat n_W)^2]}}\hat n_{\bar W_=\cap \partial X,Z})\\
&&\nabla_{\bar W_= \cap \p X}\cdot( a^\p \frac{z}{\sqrt{[1-(\hat n_Z \cdot \hat n_X)^2][1-(\hat n_W \cdot \hat n_Z)^2]}}d{\mathcal H}^{m-3}(x))\hat n_{\bar W_=\cap \partial X,Z},b^\p_q,\\
&&- \frac{cz \hat n_Z \cdot \hat n_X}{\sqrt{1 - (\hat n_Z \cdot \hat n_Z)^2}} +c^\partial_q+\frac{b^\p z}{\sqrt{1-(\hat n_Z \cdot \hat n_X)^2}} +\frac{z}{\sqrt{1-(\hat n_Z \cdot \hat n_X)^2}}\nabla_{\p X}\cdot (c^\p\hat n_{\p, Z})\Big)
\end{eqnarray*}

and

\begin{eqnarray*}
M^y:&&(a,b,c,a^\partial, b^\partial, c^\partial) \mapsto \\
&&\Big(a_y-  \nabla \cdot (a\hat n_W) w s_{yy} - b ws_{yy}- c \frac{\p \hat n_Z}{\p y} \cdot \hat n_W ,\\
&& \nabla \cdot (a \frac{\p \hat n_W}{\p y})+b_y+\nabla \cdot (c \frac{\p \hat n_Z}{\p y})  ,\\
&& -  a \frac{\p \hat n_W}{\p y} \cdot \hat n_Z - b zs_{yyy} +c_y-  \nabla \cdot (c\hat n_Z) z s_{yyy}  ,\\
&& \frac{aws_{yy} \hat n_W \cdot \hat n_X}{\sqrt{1 - (\hat n_W \cdot \hat n_X)^2}}+a^\p_y+\frac{ws_{yy}}{\sqrt{1-(\hat n_W \cdot \hat n_X)^2}} \nabla_{\partial X} \cdot (a^\p \hat n_{\partial, W})- \frac{b^\p ws_{yy}}{\sqrt{1-(\hat n_W \cdot \hat n_X)^2}}-\\
&&c^\p\frac{\partial \hat n_{\partial, Z}}{\partial y}\cdot \hat n_{\partial, W} ,\\
&&- a \frac{\p \hat n_W}{\p y} \cdot \hat n_X-c \frac{\p \hat n_Z}{\p y} \cdot \hat n_X+\nabla_{\partial X} \cdot (a^\p 
\frac{\partial \hat n_{\partial, W}}{\partial y})+b^\p_y+\nabla_{\partial X} \cdot (c^\p
\frac{\partial \hat n_{\partial, Z}}{\partial y}),\\
&& +  \frac{czs_{yyy} \hat n_Z \cdot \hat n_X}{\sqrt{1 - (\hat n_Z \cdot \hat n_X)^2}} - a^\p\frac{\partial \hat n_{\partial, W}}{\partial y}\cdot \hat n_{\partial, Z}- \frac{b^\p zs_{yyy}}{\sqrt{1-(\hat n_Z \cdot \hat n_X)^2}}+c^\p_y \\
&&-\frac{zs_{yyy}}{\sqrt{1-(\hat n_Z \cdot \hat n_X)^2}} \nabla_{\partial X} \cdot (c^\p \hat n_{\partial, Z}\Big)\\
%&&-\int_{(\overline{W_= \cap Z_{=})}\cap \partial X)}\frac{ws_{yy}}{\sqrt{[1-(\hat n_W \cdot \hat n_X)^2][1-(\hat n_W \cdot \hat n_Z)^2]}} a\hat n_{\partial, W}\cdot \hat n_{\partial, Z} d{\mathcal H}^{m-3}(x)+\int_{(\overline{W_= \cap Z_{=})}\cap \partial X)} a \frac{zs_{yyy}}{\sqrt{[1-(\hat n_Z \cdot \hat n_X)^2][1-(\hat n_W \cdot \hat n_Z)^2]}}d{\mathcal H}^{m-3}(x)\\	+ \int_{W_=\cap Z_=}  \frac{aws_{yy} \hat n_W \cdot \hat n_Z}{\sqrt{1 - (\hat n_W \cdot \hat n_Z)^2}} d{\mathcal H}^{m-2}- \int_{W_= \cap Z_=} \frac{c w s_{yy }}{\sqrt{1-(\hat n_Z \cdot \hat n_W)^2}}  d{\mathcal H}^{m-2}+ \int_{W_=\cap Z_=}  \frac{czs_{yyy} \hat n_W \cdot \hat n_Z}{\sqrt{1 - (\hat n_W \cdot \hat n_Z)^2}} d{\mathcal H}^{m-2} -\int_{(\overline{W_= \cap Z_{=})}\cap \partial X)} c \frac{ws_{yy}}{\sqrt{[1-(\hat n_W \cdot \hat n_X)^2][1-(\hat n_W \cdot \hat n_Z)^2]}}d{\mathcal H}^{m-3}(x)+\int_{(\overline{W_= \cap Z_{=})}\cap \partial X)}\frac{zs_{yyy}}{\sqrt{[1-(\hat n_Z \cdot \hat n_X)^2][1-(\hat n_W \cdot \hat n_Z)^2]}} c\hat n_{\partial, W}\cdot \hat n_{\partial, Z} d{\mathcal H}^{m-3}(x)
\end{eqnarray*}
Then the norms $||M^p||$, $||M^q||$ and $||M^y||$ are controlled by non-degeneracy, transversality, linear independence, $||s_{y}||_{C^{r+2,1}}$ and $||\hat n_X||_{C^{r,1}}$.
\end{lemma}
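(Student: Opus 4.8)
The plan is to treat this as a bookkeeping lemma resting on the moving-boundary calculus already assembled. By construction each of $M^p$, $M^q$, $M^y$ is a finite sum whose summands are products of one of the input functions $a,b,c,a^\partial,b^\partial,c^\partial$ (or one of its first partial derivatives) with one of a fixed, finite list of \emph{geometric factors} built from $s$ and $\partial X$ alone --- namely $w=|D_xs_y|^{-1}$, $z=|D_xs_{yy}|^{-1}$, the unit normals $\hat n_W=wD_xs_y$, $\hat n_Z=zD_xs_{yy}$ and the given $\hat n_X$, the tangential normals $\hat n_{\partial,W},\hat n_{\partial,Z},\hat n_{W_=,Z},\hat n_{W_=,X},\hat n_{\bar W_=\cap\partial X,Z}$, the scalar factors $(1-(\hat n_W\cdot\hat n_Z)^2)^{-1/2}$, $(1-(\hat n_W\cdot\hat n_X)^2)^{-1/2}$, $(1-(\hat n_Z\cdot\hat n_X)^2)^{-1/2}$, the velocity coefficients $s_{yy},s_{yyy}$, and the fields $\partial_y\hat n_W,\partial_y\hat n_Z$ --- possibly acted on by one of the surface-divergence operators $\nabla\cdot$, $\nabla_{W_=}\cdot$, $\nabla_{\partial X}\cdot$ or $\nabla_{\bar W_=\cap\partial X}\cdot$. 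Since $C^{r-1,1}$ of a fixed bounded domain is a Banach algebra closed under composition with smooth functions, it is enough to bound the $C^{r-1,1}$ norm of each geometric factor and to bound each surface-divergence operator as a map from $C^{r,1}$ tangent fields into $C^{r-1,1}$; the estimates on $\|M^p\|,\|M^q\|,\|M^y\|$ then follow from the product and chain rules, exactly as Corollary~\ref{lemma: iterating derivatives} followed from Lemmas~\ref{lemma: nondegeneracy on submanifolds}, \ref{lemma: submanifold differentials} and \ref{lemma s* smoothness} in the higher-dimensional-target setting of \S\ref{S:G1=G2}.

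I would carry out the factor-by-factor estimates in the following order. Non-degeneracy gives $|D_xs_y|,|D_xs_{yy}|\ge c>0$, so $w$ and $z$ arise by composing $t\mapsto t^{-1/2}$ with $|D_xs_y|^2$ and $|D_xs_{yy}|^2$; hence $w,z$, and therefore $\hat n_W,\hat n_Z$, lie in $C^{r,1}(\bar X'\times\bar Y')$ with norm controlled by $\|s_y\|_{C^{r+2,1}}$ and non-degeneracy, the count $r+2$ (rather than $r+1$) being forced by $D_xs_{yy}=\partial_x\partial_y s_y$. Next, transversality ($1-(\hat n_W\cdot\hat n_Z)^2\ge c>0$) and the linear-independence hypothesis \eqref{eqn: linear independence}, which in particular forces $1-(\hat n_W\cdot\hat n_X)^2$ and $1-(\hat n_Z\cdot\hat n_X)^2$ bounded below, permit a further application of $t\mapsto t^{-1/2}$ to control the three scalar factors; the tangential normals --- each obtained from $\hat n_W$ or $\hat n_Z$ by subtracting its orthogonal projection onto the span of one or two of $\{\hat n_W,\hat n_Z,\hat n_X\}$ and renormalizing --- are then controlled in $C^{r-1,1}$ by $\hat n_W,\hat n_Z$, $\|\hat n_X\|_{C^{r,1}}$ and those lower bounds, through the Gram--Schmidt formulas used to prove Lemma~\ref{lemma: submanifold differentials}. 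Finally $s_{yy},s_{yyy}$ are controlled by $\|s_y\|_{C^{r+1,1}},\|s_y\|_{C^{r+2,1}}$; $\partial_y\hat n_W=(w/z)(\hat n_Z-(\hat n_W\cdot\hat n_Z)\hat n_W)$ is a product of already-controlled factors (this identity, coming from $\partial_yD_xs_y=D_xs_{yy}\propto\hat n_Z$, avoids any third-order derivative of $s$); and $\partial_y\hat n_Z=z(I-\hat n_Z\otimes\hat n_Z)D_xs_{yyy}$, which does involve the third-order quantity $D_xs_{yyy}=\partial_x\partial_y^2 s_y$, still lies in $C^{r-1,1}$ when $s_y\in C^{r+2,1}$.

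The step needing more than routine algebra is the control of the four surface-divergence operators together with the derivative bookkeeping. For the operators I would establish, in the $W_=$, $Z_=$, $\partial X$ geometry, the analogues of assertions (2) and (4) of Lemma~\ref{lemma: submanifold differentials}: divergences of tangent fields along $W_=(y,s_y(\cdot,y))$ reduce to ambient divergences, hence are controlled by $\|v\|_{C^{r,1}}$; divergences along $\partial X$, and along the codimension-two submanifold $\bar W_=\cap\partial X$ (which enters via the divergence-theorem rewriting of the $W_=\cap Z_=$ integrals displayed just before the lemma), involve the first derivatives of the respective induced metrics and so are controlled by $\|v\|_{C^{r,1}}$, $\|\hat n_X\|_{C^{r,1}}$ and $\|s_y\|_{C^{r+2,1}}$, exactly as in the remarks preceding Lemma~7.2 of \cite{ChiapporiMcCannPass17}; transversality of $W_=$, $Z_=$ and $\partial X$ --- guaranteed by \eqref{eqn: linear independence}, or when $m=2$ by the pairwise linear-independence hypotheses in the remark following the theorem --- ensures that all intersections appearing are genuine submanifolds on which these operators make sense. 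The main obstacle is that the bookkeeping, while elementary, is unforgiving: one must verify that the lengthy explicit formulas for $M^p$, $M^q$, $M^y$ contain only the factors listed above, and that the twofold nesting of surface differentiations, of the velocity coefficients $s_{yy},s_{yyy}$, and in particular of the borderline factor $\partial_y\hat n_Z$, never demands more than $\|s_y\|_{C^{r+2,1}}$ and $\|\hat n_X\|_{C^{r,1}}$; here the divergence-theorem identities collected just before the lemma are invoked repeatedly to trade codimension-two and codimension-three integrals (and their boundary terms) for codimension-one data before any further differentiation. Granting this, each component of $M^p(a,\dots,c^\partial)$, $M^q(a,\dots,c^\partial)$ and $M^y(a,\dots,c^\partial)$ is a finite sum of products whose $C^{r-1,1}$ norm is controlled by non-degeneracy, transversality, linear independence, $\|s_y\|_{C^{r+2,1}}$ and $\|\hat n_X\|_{C^{r,1}}$, which yields the asserted bounds on $\|M^p\|$, $\|M^q\|$ and $\|M^y\|$.
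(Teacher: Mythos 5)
Your proposal is correct and follows essentially the same route as the paper: the paper's proof simply writes out the operator-norm bound for $\|M^p\|$ as a sum of $C^{r-1,1}$/$C^{r,1}$ norms of the same geometric factors ($w$, $z$, the various unit normals and transversality denominators) and asserts the analogous estimates for $M^q$, $M^y$, with each factor controlled exactly as you argue via non-degeneracy, transversality, linear independence, $\|s_y\|_{C^{r+2,1}}$ and $\|\hat n_X\|_{C^{r,1}}$. Your factor-by-factor accounting (including the identity for $\partial_y\hat n_W$ and the loss of one derivative in the surface divergences) is just a more explicit version of what the paper leaves as ``straightforward to compute.''
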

\begin{proof} It is straightforward to compute:
\begin{eqnarray*}
	||M^p|| \leq 
	&&1+||\hat n_W||_{C^{r,1}}||w||_{C^{r,1}}+ ||w||_{C^{r-1}} + ||\frac{w \hat n_W \cdot \hat n_Z}{\sqrt{1 - (\hat n_W \cdot \hat n_Z)^2}} \hat n_{W_=,Z}||_{C^{r,1}} + ||w \hat n_{W_=,Z}||_{C^{r,1}} + 1+1\\
	&&+||\frac{w \hat n_W \cdot \hat n_X}{\sqrt{1 - (\hat n_W \cdot \hat n_X)^2}}||_{C^{r-1,1}}+ 1 +|| \frac{w}{\sqrt{1-(\hat n_W \cdot \hat n_X)^2}}||_{C^{r-1,1}}  ||\hat n_{\partial, W}||_{C^{r,1}}\\
	&&+||\frac{ w}{\sqrt{1-(\hat n_W \cdot \hat n_X)^2}}||_{C^{r-1,1}}
	 + ||\frac{w \hat n_W \cdot \hat n_Z}{\sqrt{1 - (\hat n_W \cdot \hat n_Z)^2}}\hat n_{W_=,Z} \cdot \hat n_{W_=,X} +||w\hat n_{W_=,Z} \cdot \hat n_{W_=,X}||_{C^{r-1,1}}\\
	&& + ||\hat n_{\partial, W}\cdot \hat n_{\partial, Z}\frac{w}{\sqrt{[1-(\hat n_W \cdot \hat n_X)^2][1-(\hat n_W \cdot \hat n_Z)^2]}} \hat n_{\bar W_= \cap \partial X,Z}||_{C^{r,1}}\\
	&&+ || \frac{w}{\sqrt{[1-(\hat n_W \cdot \hat n_X)^2][1-(\hat n_Z \cdot \hat n_W)^2]}} \hat n_{\bar W_= \cap \partial X,Z})||_{C^{r,1}} +1+1.
\end{eqnarray*}
Similar estimates hold for $M^q$ and $M^y$, and it is straightforward to see that the upper bounds are controlled by the indicated quantities.
\end{proof}

%$W_= \cap Z_{\leq}$, $W_\leq \cap Z_{=}$, $W_\leq \cap Z_{\leq}$ $(\overline{W_= \cap Z_{\leq})}\cap \partial X$, $(\overline{W_\leq \cap Z_{=})}\cap \partial X$, $(\overline{W_\leq \cap Z_{\leq})}\cap \partial X$:

%\begin{eqnarray}
%C^{r,1}(W_= \cap Z_{\leq})
%\end{eqnarray}
We are now ready to prove the Theorem \ref{thm: smoothness of G_2}, on the regularity of $G_2$.

\begin{proof}
	The proof is similar to the proof of Theorem \ref{thm: smoothness of G}; for indices $\alpha, \beta, \gamma$ with $\alpha + \beta +\gamma \leq r$, we apply the iterated operators $(M^y)^\alpha (M^p)^\beta(M^q)^\gamma$ to $(h, 0, 0,0,0,0)$, where $h=h(x,y,q) =f(x)\frac{q- s_{yy(x,y)}}{| D_{x} s_y(x,y)|} \in C^{r,1}$ is the integrand in $G_2$.  Setting
	$$
	(M^y)^\alpha (M^p)^\beta(M^q)^\gamma(h, 0, 0,0,0,0)=(a,b,c,a^\partial,b^\partial,c^\partial) \in B^{r-(\alpha +\beta+\gamma)},
	$$
	we have that
\begin{eqnarray*}
	\frac{\partial^{\alpha+\beta+\gamma} G_2}{\p y^\alpha \p p^\beta  \p q^\gamma}& =&\int_{W_= \cap Z_\le}ad\mathcal H^{m-1}(x)+\int_{W_\le \cap Z_\le}bd\mathcal H^{m}(x)+\int_{W_\le \cap Z_=}cd\mathcal H^{m-1}(x)\\
	&+&\int_{(\overline{W_= \cap Z_\le})\cap \p X}a^\p d\mathcal H^{m-2}(x)+\int_{(\overline{W_\le \cap Z_\le})\cap \p X}b^\p d\mathcal H^{m-1}(x)+\int_{(\overline{W_\le \cap Z_=})\cap \p X}c^\p d\mathcal H^{m-2}(x)
\end{eqnarray*}
is Lipschitz by Lemma  \ref{lemma: derivatives of X_2 integrals}, and it's norm is controlled by the sizes of the domains of integration and the $L^\infty$ norms of the integrands, which are in turn controlled by the desired quantities as a consequence of iterating Lemma \ref{lemma: differentiating i=2 level set integrals}.  As in Theorem \ref{thm: smoothness of G} in the previous section, and the corresponding result for one dimensional targets in \cite{ChiapporiMcCannPass17}, we observe that since the initial function $(h,0,0,0,0,0)$ we apply the operators to does not include any boundary terms, the  norm of the first application depends on $||\hat n_X||_{C^{r-1,1}}$  rather than $||\hat n_X||_{C^{r,1}}$, saving a derivative of smoothness in $\hat n_X$ in the final result.
\end{proof}
%\begin{corollary}
%	We have  $\frac{\partial G_2}{\partial q}(y,p,q) = \int_{X_2(y,p,q)}\frac{f(x)}{|D_xs_y(x,y)|}$
%\end{corollary}

%In particular,  this should allow us to specify conditions under which $G_2(y,p,q)-g(y)$ is locally Lipschitz and
%$$
%\frac{\p G_2}{\p q} \ge \int_{X_2(y,p,q)} \frac{f(x)}{|D_x s_y|} d{\mathcal H}^{m-1}(x) >0.
%$$
%i.e.
%$$
%\frac{\p G_2}{\p q} = \int_{\p_{rel}X_2(y,p,q)} f(x)\frac{(q-s_{yy})}{|D_x s_y|} \frac{z}{\sqrt ?} 
%d{\mathcal H}^{m-2}(x) + " \ge " >0 .
%$$

%At such points $v'$ and $v''$ are both Lipschitz (by the Cauchy-Lipschitz theorem and the Clarke implicit
%function theorem), whence $v \in W^{3,\infty}_{loc} = C^{2,1}_{loc}$.  (In fact,  mere continuity of $G-g$
%with respect to $y$ might be enough,  provided its Lipschitz in the other variables.)

%With more work (essentially iterating these arguments together with analogous ones on the boundary,
%probably reformulated in terms of fluxes as we did in CPAM) 
%one could presumably identify conditions under which $G_2-g$ (and hence $v$) is smoother.
%\marginpar{BP: It looks like we aren't assuming lower bounds on $f$ here, but I think we still get a lower bound on  $\frac{\partial G_2}{\partial q}(y,p,q) $}
\begin{corollary}[Boostrapping smoothness for local ODE]
\label{cor: C11 to C21}
	Assume that the conditions in Theorem \ref{thm: smoothness of G_2} hold for  $Y',$ $P' =v'(Y')$ and $Q'=v''(Y')$ for some $r \geq 0$,  that $g \in C^{r,1}(Y')$    is bounded from below on $Y'$, $g \geq L_g >0$,  and that $f \in C^{r,1}(X')$ is bounded from above and below  $ \infty> U_f\geq f \geq L_f >0$ on $X' =\cup_{(y,p, q) \in Y' \times P'\times Q'} X_{2}(y,p,q)$.

	Then any almost everywhere solution $v \in C^{1,1}(Y')$ of the $i=2$ equation is in fact in $C^{r+2,1}(Y')$.
\end{corollary}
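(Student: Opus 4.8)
The plan is to exploit that for $n=1$ the $i=2$ equation is the scalar second-order ODE $G_2(y, v'(y), v''(y)) = g(y)$ holding for a.e.\ $y \in Y'$, to invert it into the explicit first-order relation $v''(y) = \Gamma(y, v'(y), g(y))$ for a suitable $C^{r,1}$ function $\Gamma$, and then bootstrap by repeatedly differentiating this relation. First I would record that, since $v \in C^{1,1}(Y')$, the sets $P' = v'(Y')$ and $Q' = v''(Y')$ are bounded, so Theorem~\ref{thm: smoothness of G_2} applies under the stated hypotheses and gives $G_2 \in C^{r,1}(Y' \times P' \times Q')$; in particular $q \mapsto G_2(y,p,q)$ is Lipschitz. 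Because $s \in C^2$ and $\bar X' \times \bar Y'$ is compact, there is a finite $\Theta$ with $q - s_{yy}(x,y) \le \Theta$ for all relevant $(x,y,q)$, and along the solution $G_2(y, v'(y), v''(y)) = g(y) \ge L_g > 0$; Lemma~\ref{L:strict ellipticity} then yields $\partial_q G_2 \ge G_2/\Theta \ge L_g/\Theta > 0$ in a neighbourhood of the $2$-jet of $v$. Thus near $v''(y)$ the map $q \mapsto G_2(y,p,q)$ is strictly increasing, Lipschitz, with derivative bounded away from zero, hence invertible with Lipschitz inverse whose image contains $g(y)$. Applying the $C^{r,1}$ implicit function theorem (when $r \ge 1$), and the monotone-inversion argument just given (when $r = 0$, where $G_2$ is merely Lipschitz), produces $\Gamma \in C^{r,1}$ on a neighbourhood of $\{(y, v'(y), g(y)) : y \in Y'\}$ with $G_2(y, p, \Gamma(y,p,t)) = t$, so that $v''(y) = \Gamma(y, v'(y), g(y))$ for a.e.\ $y$.

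Next I would bootstrap. The composition $y \mapsto \Gamma(y, v'(y), g(y))$ lies in $C^{0,1}(Y')$ since $\Gamma \in C^{r,1} \subseteq C^{0,1}$, $v' \in C^{0,1}$ and $g \in C^{r,1} \subseteq C^{0,1}$; as this Lipschitz function agrees a.e.\ with $v''$ and $v'$ is Lipschitz, $v'$ is in fact $C^1$ with $v'' = \Gamma(\cdot, v', g)$ everywhere (a Lipschitz function whose a.e.\ derivative coincides with a continuous function is $C^1$), so $v \in C^{2,1}(Y')$. Now iterate: assuming $v \in C^{k,1}(Y')$ with $2 \le k \le r+1$, one has $v' \in C^{k-1,1}$ and, since $k - 1 \le r$, $g, \Gamma \in C^{r,1} \subseteq C^{k-1,1}$; closure of $C^{k-1,1}$ under composition gives $v'' = \Gamma(\cdot, v', g) \in C^{k-1,1}(Y')$, i.e.\ $v \in C^{k+1,1}(Y')$. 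Running this from $k = 2$ up to $k = r+1$ yields $v \in C^{r+2,1}(Y')$; one cannot continue past this, since $\Gamma$ and $g$ carry only $C^{r,1}$ regularity.

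I expect the main obstacle to be Step 1: extracting from Lemma~\ref{L:strict ellipticity} a genuinely uniform, strictly positive lower bound on $\partial_q G_2$ along the (a priori unknown) solution --- this is where the lower bound $g \ge L_g$, the bounds on $f$, and the finiteness of all the geometric quantities in Theorem~\ref{thm: smoothness of G_2} enter --- and then running a bona fide implicit function theorem at the borderline $C^{r,1}$ regularity, especially the case $r = 0$ where one must invert a merely Lipschitz monotone operator by hand rather than invoke the smooth inverse function theorem. Once $v'' = \Gamma(\cdot, v', g)$ is in place, the remaining bootstrap is routine bookkeeping with H\"older composition estimates.
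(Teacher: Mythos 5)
Your proof is correct and follows essentially the same route as the paper: invert $q \mapsto G_2(y,p,q)$ using a quantitative positive lower bound on its monotonicity in $q$ along the solution, rewrite the equation a.e.\ as $v''=\Gamma(y,v',g)$ with $\Gamma$ as smooth as $G_2$, upgrade this to an everywhere identity via the fundamental-theorem-of-calculus argument to get $v\in C^{2,1}$, and then bootstrap through composition in $C^{k,1}$. The only immaterial differences are that the paper derives the lower bound on $\partial_q G_2$ from the explicit formula $\partial_q G_2=\int_{X_2} f/|D_x s_y|\,d\mathcal{H}^{m-1}$ combined with a lower bound on $\mathcal{H}^{m-1}(X_2)$ forced by $g\ge L_g$ (rather than your appeal to Lemma \ref{L:strict ellipticity}), and it invokes the Clarke inverse function theorem in place of your hands-on monotone inversion.
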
	
\begin{proof}
	Setting $k(y) : =v'(y)$, we have $g(y) =G_2(y,k(y),k'(y))$ a.e.   At any $y$ where this holds, we must have $\mathcal{H}^{m-1}(X_2(y,k(y),k'(y)))\geq C >0$, where $C$ depends on $L_g$, $U_f$, $\min|D_xs_y|$, $\max |D_xs_{yy}|$ and the diameter of $X$. Noting that $$\frac{\partial G_2}{\partial q}(y,p,q) = \int_{X_2(y,p,q)}\frac{f(x)}{|D_xs_y(x,y)|}d\mathcal H^{m-1}(x),$$% {\blu \geq \frac{ G_2(y,k(y),k'(y))}{q-\max|s_{yy}|}=\frac{g(y)}{q-\max|s_{yy}|}},$$ 
	this yields a lower bound on $\frac{\partial G_2}{\partial q}(y,p,q)>B$ in the region of interest.% {\blu since $Q'=k''(Y')$ is bounded}.

Therefore, by the Clarke inverse function theorem, $q \mapsto G_2(y,p,q)$ is invertible; denoting its inverse $q(y,p,\cdot)$, $q$ is as smooth as $G_2$ (that is, $q \in C^{r,1}$) and we have, almost everywhere
	\begin{equation}\label{eqn: k' ode}
	k'(y) =q(y,k(y),g(y)).
\end{equation}
%	The Cauchy-Lipschitz theorem implies that this has at most one
The Lipschitz function $k$ is then equal to the antiderivative of its derivative; for a fixed $y_0$, we have
$$
k(y) - k(y_0) =\int_{y_0}^yk'(s)ds =\int_{y_0}^yq(s,k(s),g(s))ds.
$$ 
The fundamental theorem of calculus then implies that $k$ is \textit{everywhere} differentiable, and that \eqref{eqn: k' ode} holds \textit{for all} $y$.  In particular, $k'$ is Lipschitz as $y \mapsto q(y,k(y),g(y))$,  hence $v \in C^{2,1}(Y')$.
If $r >0$, one can immediately bootstrap to get $k' \in C^{r,1}(Y')$, hence $v \in C^{r+2,1}(Y')$.\end{proof}
%\begin{corollary}
%	If $k$ is Lipschitz, it is as smooth as $q$ and $g$.
%\end{corollary}

\section{$C^{1,1}$ regularity of solutions to the $i=2$  ODE}	
\label{S:ODE} %initial regularity for unidimensional targets}

%\marginpar{I think we need to assume optimality of $(v^s,v)$ (or alternately that $F_\# g << \mathcal H^1$) 
%to avoid e.g. the hypotheses of Corollary \ref{C:local PDE}.}
Throughout this section, we will assume that $n=1$ (one dimensional target), 
 and the minimizer $(u,v)= (v^s,u^{\tilde s})$ to \eqref{Kantorovich dual}
satisfies the local equation \eqref{local PDE}--\eqref{G_i} 
for  $i=2$ and a.e. $y \in Y$; this implies that $\partial ^sv(y) =X_2(y,v'(y),v''(y))$ for almost all $y$.

We will assume  $s, s_y \in C^2(\overline X \times \overline Y)$,
and the probability densities $f$ and $g$ are continuous on $X$ and $Y$,
satisfying bounds
\begin{eqnarray}\label{f bounds}
0 < L_f \le f(x)  \le U_f < \infty & \mbox{\rm for all} & x \in X 
\\ 0<L_g\leq g(y) \leq U_g <\infty &\mbox{\rm for all} & y \in Y. 
\label{g bounds}
\end{eqnarray}

% on $f$ throughout $X$ and on $g$ throughout $Y$, and smoothness of both densities and $s$. 
As in Section \ref{sect: smoothness of $G_i$}, we assume without loss of generality that $s$ is everywhere convex with respect to $y$, so that $k(y) :=v'(y)$ is non-decreasing.

The functional
\begin{eqnarray}\label{eqn: 1-d G_i}G_2(y,p,q)
&:=&\int_{X_2(y,p,q)} \frac{q-s_{yy}(x,y)}{| D_{x} s_y(x,y)|}  
f(x) d{\mathcal H}^{m-1}(x).
\nonumber
\end{eqnarray}
is strictly increasing in $q$ on $\{(y,p,q) \mid G_2>0\}$ and diverges as $q \to \infty$. 

For $\beta >0$ and fixed $y$ and $ p$,  as in Section \ref{S:G2}, denote by $q(y, p, \beta )$ the unique solution of 

\begin{eqnarray}
q\mapsto G_2(y,p,q) =\beta
\nonumber
\end{eqnarray}
We therefore have $q(y,k(y),g(y)) =k'(y)$ almost everywhere.  Under the assumptions of Theorem \ref{thm: smoothness of G_2}  for some $r \geq 0$, $q(y,p, \beta )$ is Lipschitz continuous in all its arguments, by the Clarke implicit function theorem.  Although it holds only almost everywhere, this formulation provides some intuition for why we expect $k$ to be Lipschitz, since boundedness of $k$ and $g$ imply boundedness of $k'$ wherever the equation $q(y,k(y),g(y)) =k'(y)$ holds.
%\marginpar{BP: I added the remark in blue, but it's maybe not really very enlightening.  We can remove it unless RJM sees value.}

%\marginpar{BP: Is there a Clarke implicit function theorem for Lipschitz functions? In the smooth case, the %inverse function theorem easily gives an implicit function theorem, but I'm not sure here...$q$ is clearly %Lipschitz in $\beta$, but is it in $y$ and $k$?}

The estimate below essentially controls the volume of the region $F^{-1}([y_0,y_1])$ mapped to an interval $[y_0,y_1]$ by
the map $F$ of Theorem \ref{T:nonlocal PDE}
using the variation in $k$, compensated by a term reflecting the variation in $y$.  Together with mass balance, this
proposition easily implies that $k$ is Lipschitz 
 if continuous (and so $v \in C^{1,1}_{loc}$ if $v \in C^1_{loc}$); see Theorem \ref{T: Lipschitz k}.

The continuity of $k$, assumed in the proposition, will be confirmed for almost everywhere solutions to \eqref{G_i} below, 
 assuming an enhanced version of the twist hypothesis (see Assumption \ref{assumption: lower dimensional twist} and
Proposition \ref{P: continuous k}).

\begin{proposition}[The derivative of $v$ is Lipschitz if continuous]
\label{P: X_2 structure}
	Let $Y' \subset Y$ and $P' \subset P =\frac{\partial s}{\partial y}(X,Y)$ be regions such that $\inf_{y \in Y', p \in P'} \mathcal{H}^{m-1}(X_1(y,p)) >0$.  Then there exist positive constants $C_1$ and $C_2$ such that 
%	For any  monotone increasing, continuous function $k(y)$, 
	
	\begin{equation}\label{eqn: bound on X_2 volume}
	{\mathcal H}^{m}\left(\mathop{\cup}_{y \in [y_0,y_1]}X_2(y,k(y),q(y,k(y), g(y)))\right) \geq C_1|k(y_0)-k(y_1)| -C_2|y_0-y_1|
	\end{equation}	
for any $y_0,y_1 \in Y'$ and monotone increasing, continuous function $k:Y' \rightarrow P'$.  
\end{proposition}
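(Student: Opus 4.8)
The plan is to exploit the co-area structure already established in Theorem \ref{T:nonlocal PDE}, which expresses the $g$-mass received by $y$ as an integral of $f$ over $X_2(y,k(y),q(y,k(y),g(y)))$ against the density $(q-s_{yy})/|D_xs_y|$, and to think of the union $\cup_{y\in[y_0,y_1]}X_2(y,k(y),\cdot)$ as a ``tube'' $F^{-1}([y_0,y_1])$ swept out as $y$ runs over the interval. The key point is that the $(m-1)$-dimensional slices $X_1(y,k(y))$ move with a quantifiable normal velocity as $y$ varies along the curve $y\mapsto(y,k(y))$: since $X_1(y,p)=\{x:D_ys(x,y)=p\}$, the implicit function theorem gives that along this path the slice moves with normal speed $|D_x s_y|^{-1}|\dot p - s_{yy}\dot y| = |D_xs_y|^{-1}|k'(y)-s_{yy}(x,y)|$ at $x$, at least formally (and $k$ need only be assumed continuous, so the argument must be carried out via the monotone-in-$y$ inclusion structure rather than differentiation — see below). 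Integrating this normal velocity against $\mathcal H^{m-1}$ on the slice and then against $dy$ gives a lower bound on the $m$-dimensional volume of the tube by the co-area formula, \emph{provided} we can control the integrand from below.

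First I would set up the co-area inequality rigorously. For the continuous monotone $k$, consider the Lipschitz map $(x,y)\mapsto$ ``how far $x$ is displaced as $y$ increases'', but cleaner: fix the region $X' :=\cup_{y\in Y',p\in P'}X_1(y,p)$ and consider the function $\Phi(x):=$ (the $y$-coordinate for which $x\in X_1(y,k(y))$, if it exists) — one shows using the twist/non-degeneracy and monotonicity of $k$ that $\Phi$ is well-defined on $\cup_{y\in[y_0,y_1]}X_1(y,k(y))$ with $|D\Phi(x)| \le |D_xs_y(x,y)|/|k'(y)-s_{yy}(x,y)|$ wherever differentiable; but since $k$ is merely continuous I would instead argue directly with the co-area formula for the Lipschitz function $\psi(x):=D_ys(x,y(x))$-type construction, or more simply bound the tube volume below by $\int_{y_0}^{y_1}\left(\int_{X_1(y,k(y))}\frac{f(x)}{U_f}\,d\mathcal H^{m-1}\right)dy$ times a geometric constant coming from the non-degeneracy $\inf|D_xs_y|>0$ and the uniform bound on $\mathcal H^{m-1}(X_1(y,p))$. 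The hypothesis $\inf_{Y'\times P'}\mathcal H^{m-1}(X_1(y,p))>0$ is exactly what keeps the slice integrals bounded below.

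Next, to produce the two terms $C_1|k(y_0)-k(y_1)|$ and $-C_2|y_0-y_1|$ on the right, I would use mass balance: $\int_{y_0}^{y_1} g(y)\,dy = \int_{y_0}^{y_1}G_2(y,k(y),q(y,k(y),g(y)))\,dy$, and the left side is at most $U_g|y_1-y_0|$, while the right side, using $\partial G_2/\partial q=\int_{X_2}f/|D_xs_y|\,d\mathcal H^{m-1}$ and the definition of $q$, relates the increment $k(y_1)-k(y_0)=\int q'\,dy$... Actually the cleanest route: write $k(y_1)-k(y_0)=\int_{y_0}^{y_1}q(y,k(y),g(y))\,dy$ a.e.\ (from $q(y,k(y),g(y))=k'(y)$), bound $|q|$ from below only where it is large by comparing $G_2=g\ge L_g$ with the volume of $X_2$, and absorb the ``small $q$'' part into the $C_2|y_0-y_1|$ term since there $|q-s_{yy}|$ and hence $|q|$ is bounded. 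Then the volume of the tube $\ge \int_{y_0}^{y_1}(\text{velocity})\,dy \ge C_1'\int_{y_0}^{y_1}|k'(y)|\,dy - C_2'|y_1-y_0| = C_1'|k(y_1)-k(y_0)| - C_2'|y_1-y_0|$ using monotonicity of $k$ in the last step, which is \eqref{eqn: bound on X_2 volume}.

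\textbf{Main obstacle.} The hard part will be making the normal-velocity/co-area argument rigorous when $k$ is only assumed continuous (not Lipschitz) — one cannot differentiate $k$ to get $k'$ inside the integral, so the lower bound on the tube volume must be obtained by a covering/approximation argument: partition $[y_0,y_1]$ finely, on each subinterval $[a,b]$ observe that $\cup_{y\in[a,b]}X_1(y,k(y))$ contains (by monotonicity of $p\mapsto X^i_\le(y,p)$ and continuity) a ``shell'' of $m$-dimensional volume at least $c\,|k(b)-k(a)| - c'|b-a|$ by the divergence-theorem / flux estimates of Section \ref{S:G2} (Lemma \ref{lemma: derivatives of X_2 integrals}), then sum and pass to the limit. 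Controlling the overlap of these shells (so the sum of volumes genuinely bounds the volume of the union) is where the non-degeneracy and the uniform slice-size hypothesis do the real work. Once that geometric estimate is in hand, combining it with the mass-balance bound $\mathcal H^m(\text{tube})\ge \int f/U_f \ge (\text{mass transported})/U_f = (1/U_f)\int_{y_0}^{y_1} g \le$ ... is straightforward bookkeeping.
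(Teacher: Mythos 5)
Your high-level ingredients are in the right spirit (a coarea bound in $p$, the lower bound $g\ge L_g$ forcing a quantitative statement about $X_2$ inside $X_1$, and a $-C_2|y_0-y_1|$ correction for the drift of the level sets), but the proposal has a genuine gap exactly at the point you flag as ``the main obstacle,'' and the fix you sketch does not close it. First, the per-subinterval ``shell of volume at least $c|k(b)-k(a)|-c'|b-a|$'' is precisely the inequality \eqref{eqn: bound on X_2 volume} restricted to $[a,b]$, so partitioning and summing is circular unless you can prove that shell estimate directly on short intervals; and the moving-boundary formulas of Lemma \ref{lemma: derivatives of X_2 integrals} cannot supply it, since they differentiate integrals over $X_2(y,p,q)$ in the parameters $(y,p,q)$ separately (for Lipschitz integrands), and say nothing about the measure of a union of such sets along a merely continuous curve $y\mapsto (y,k(y))$. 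Second, you route the argument through $k'$: you invoke $k'(y)=q(y,k(y),g(y))$ a.e.\ and $k(y_1)-k(y_0)=\int_{y_0}^{y_1}k'$. Neither is available here: the Proposition is a purely geometric statement for an \emph{arbitrary} monotone increasing continuous $k:Y'\to P'$ (it is only later, in Theorem \ref{T: Lipschitz k}, that $k=v'$ satisfies the ODE), and a continuous monotone $k$ need not be absolutely continuous, so $\int_{y_0}^{y_1}k'\,dy$ can be strictly smaller than $k(y_1)-k(y_0)$ (Cantor-type $k$ has $k'=0$ a.e.). Any normal-velocity computation involving $k'$ therefore cannot, by itself, produce the term $C_1|k(y_0)-k(y_1)|$. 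Relatedly, the region where $q-s_{yy}$ is small cannot be ``absorbed into $C_2|y_0-y_1|$'': small sweeping speed is a loss of volume, not a bounded error, and what is needed is a guarantee that the remaining region still carries a fixed fraction of each slice.

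The paper's proof avoids differentiating $k$ altogether, and this is the idea missing from your outline. Fix $y_0$, let $C$ bound $|s_y(x,y_0)-s_y(x,y)|/|y_0-y|$, and consider the static slab of points $x$ with $s_y(x,y_0)\in[k_0,\,k_1-C|y_1-y_0|]$ and $q(y_0,s_y(x,y_0),g(y_0))-s_{yy}(x,y_0)\ge\alpha$. By the intermediate value theorem (using only continuity and monotonicity of $k$) each such $x$ lies on $X_1(y,k(y))$ for some $y\in[y_0,y_1]$, and --- after reducing to $|y_0-y_1|<\delta$ by uniform continuity (the case $|y_0-y_1|\ge\delta$ being trivial because the right-hand side of \eqref{eqn: bound on X_2 volume} can be made negative) --- the inequality $q-s_{yy}\ge 0$ persists, so $x\in X_2(y,k(y),q(y,k(y),g(y)))$. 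This reduces the tube estimate to a coarea bound at the single value $y_0$ over $p\in[k_0,k_1-C|y_1-y_0|]$, combined with the key proportion estimate: since $g(y_0)=G_2(y_0,p,q(y_0,p,g(y_0)))\ge L_g$ while $f\le U_f$, $|D_xs_y|$ is bounded below and $\mathcal H^{m-1}(X_1)$ above, the set where $q-s_{yy}\ge\alpha$ occupies a fraction $B>0$ of $X_1(y_0,p)$ for $\alpha$ small. That proportion estimate is the correct replacement for your ``absorb the small-$q$ part'' step, and the fixed-$y_0$ slab is the correct replacement for the overlapping-shells covering argument.
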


 Before proving the Proposition, it is instructive to provide some intuition.  For a \emph{fixed} $y$, the coarea formula yields
$\mathcal H^m(\cup_{p \in [p_0,p_1]}(X_1(y,p))) \sim |p_1-p_0|$,
where the constants of proportionality depend on two-sided bounds for $|D_x s_y|$ and $\mathcal H^{m-1}(X_1(y,p))$.  
The equation 
$$0< g(y) = \int_{X_2(y, p, q(y,p,g(y)))} \frac{q-s_{yy}(x,y)}{|D_x s_y|} f(x) d\mathcal H^{m}(x)
$$ 
forces $X_2(y,p,q(y,p,g(y)))$ to fill up a proportion 
%$\frac{L_g \inf_{X_2} |D_x s_y|}{(q(y,p,U_g) - \inf_{X_2} b_{yy}) \sup_{p} \mathcal H^{m-1} (X_1(y,p))}$
%or $\frac{L_g C_{48}}{(q(y,p,U_g) - \min b_{yy})C_{51}}$
of $X_1(y,p)$ which can be bounded in terms of the same bounds as before,  $L_g$, $U_f$, $\| s\|_{C^2}$, 
and $q(y,p,U_g)$. Thus there exists $C>0$ such that
$$\mathcal H^m \left(\bigcup_{p \in [p_0,p_1]}X_2(y,p, q(y,p,g(y)))\right) \geq  C|p_1-p_0|.$$

In the Proposition, $y$ is not fixed but varies within an interval $[y_0,y_1]$, and $p=k(y)$ is now a function.  Continuity and monotonicity force the image $k([y_0,y_1])$ to match the interval $[k_0 =k(y_0), k_1=k(y_1)]$.  If the domains $X_2(y,k(y),q(y,k(y), g(y))$ were independent of $y$, the result would then follow immediately, without the second term on the right hand side of \eqref{eqn: bound on X_2 volume}.

The second term compensates for the possibility that as $y$ and $k(y)$ change, the level curves bend in a way that reduces the volume on the left hand side.  %(The result might hold without the second term, that is, with $C_2=0$, but I wasn't able to prove it.)

\begin{proof}
	Set $k_i =k(y_i)$ for $i=0,1$, and choose $C$ such that $|s_y(x,y_0) -s_y(x,y)|\leq C|y_0-y|$ for all $y \in Y'$ and $x  \in X_1(Y',P') $.

	Suppose that $k_0 \leq s_y(x,y_0) \leq k _1-C|y_1-y_0|$.  Then
	$$
	s_y(x,y_1) \leq s_y(x,y_0)+C|y_1-y_0|\leq k_1
	$$
	By the intermediate value theorem, $k(y)=s_y(x,y)$ for some $y \in [y_0,y_1]$; that is, $x \in X_1(y,k(y))$.
	
	Now, suppose in addition that  $q(y_0,k(y),g(y_0)) -s_{yy}(x,y_0) \geq \alpha >0$; by uniform continuity, there is a $\delta >0$ (depending on $\alpha$ but not $y$) such that, we have
	$$
	q(y,k(y),g(y)) -s_{yy}(x,y) \geq 0;
	$$ 
	that is, $x \in X_2(y,k(y),q(y,k(y),g(y)))$, if $|y-y_0| <\delta$.

	Now, note if $|y_0-y_1| \geq \delta$, the right hand side of \eqref{eqn: bound on X_2 volume} is negative for appropriate choices of $C_1,C_2$ (note that $k(y_0) -k(y_1)$ is less than or equal to the diameter of $P'$).  We can therefore assume $|y_0-y_1| <\delta$
	without loss of generality, and the above argument then yields $x \in X_2(y,k(y),q(y,k(y),g(y)))$ for some $y \in |y_0-y_1|$.
	
	It therefore follows that 
	\begin{eqnarray}\label{eqn: X_2 inclusion}
	&	\cup_{p \in[ k_0,k_1-C|y_1-y_0|]} \{x \in X_1(y_0,p) \mid q(y_0,p,g(y_0)) -s_{yy}(x,y_0) \geq \alpha \}\\
	&	\subseteq \cup_{y \in [y_0,y_1]}X_2(y,k(y),q(y,k(y), g(y)))\nonumber
	\end{eqnarray}
	
	%\marginpar{I have trouble following the implication; do $L_f$ and $U_f$ enter?}
	
	Now   our definition of $q$ yields $g(y_0)  =G_2(y_0,p, q(y_0,p,g(y_0)))$, which
	implies that for a small enough $\alpha$, we have 
	$$
	\mathcal H^{m-1}(\{x \in X_1(y_0,k(y)): q(y_0,k(y),g(y_0)) -s_{yy}(x,y_0) \geq \alpha\} )\geq B\mathcal H^{m-1}(X_1(y_0, k(y)))
	$$ for some $B >0$ depending on the lower bound $L_g$ for $g$, $\min|D_xs_y|$, $\max|D_xs_{yy}|$ and the size of the level sets, $\sup_{(y,p) \in Y \times k(Y)} \mathcal H^{m-1}(\overline{X_1(y,p)})$.  It then follows that 
	\begin{eqnarray}\label{eqn: X_2 bound as proportion of X_1}
	&&vol[	\cup_{p \in[ k_0,k_1-C|y_1-y_0|]} \{x \in X_1(y_0,p) \mid q(y_0,p,g(y_0)) -s_{yy}(x,y_0) \geq \alpha \}] \nonumber\\
	&\geq &Bvol	[\cup_{p \in[ k_0,k_1-C|y_1-y_0|]}  X_1(y_0,p)]
	\end{eqnarray}
	
	Now, if $k_1-2C|y_1-y_0|\leq k_0$, then $|k_1-k_0|- 2C|y_1-y_0| <0$ and there is nothing to prove, since the right hand side of \eqref{eqn: bound on X_2 volume} is negative for appropriate choices of the constants. 
	
	On the other hand, if $k_1-2C|y_1-y_0|\geq  k_0$, then $k_1-C|y_1-y_0|-k_0 \geq \frac{|k_1-k_0|}{2}$, and so
	$$
	vol[	\cup_{p \in[ k_0,k_1-C|y_1-y_0|]}  X_1(y_0,p)] \geq D\frac{|k_1-k_0|}{2},
	$$
	where $D$ depends on the size $\min_{p \in [k_0,k_1]} \mathcal H^{m-1}(\overline{X_1(y_0,p)})$ of the level sets, and the speed limit $\min_{s_y(y_0,x) \in [k_0,k_1] }|D_xs_y(y_0,x)|$.
%	\marginpar{BP: This argument may need to be rewritten locally, to avoid $\mathcal H^{m-1}(\overline{X_1(y_0,k)}$ degenerating to $0$.}
	This combined with \eqref{eqn: X_2 inclusion} and \eqref{eqn: X_2 bound as proportion of X_1}  establishes the result.
\end{proof}

%\begin{assumption}\label{assumption: X_2 structure}

%We will assume that, for any  monotone continuous function $k(y)$, 

%$$
%{\mathcal H}^{m-1}(\cup_{y \in [y_0,y_1]}X_2(y,k(y)),q(y,k(y), %g(y)))) \geq C_1|k(y_0)-k(y_1)| -C_2|y_0-y_1|
%$$
%all $y_0$ close to $y_1$, 
%for positive constants $C_1,C_2>0$.  
%\end{assumption}
%THIS WON'T HOLD FOR DISCONTINUOUS K....NEED ANOTHER ARGUMENT TO FORCE K TO BE CONTINUOUS
%\begin{remark}
%Under lower bounds on $g$, and conditions on $s$ governing the %variation in $s_{yy}$ throughout $X_2$, $X_2(y,k(y),q(y,k(y), g(y)))$ will have to fill up a strictly positive fraction of the ${\mathcal H}^{m-1}$ mass of $X_1(y,k(y))$.  The assumption then boils down to a similar assumption about $X_1(y,k)$.
%\end{remark}\ref{P: X_2
Our strategy is to combine Proposition \ref{P: X_2 structure} with mass balance to deduce a Lipschitz condition on $k$. To apply this 
Proposition, we must first show that $k$ is continuous.  We do this under the following strengthening of the twist condition:

%\begin{assumption}[Lower dimensional twist]\label{assumption: lower dimensional twist}
%	For fixed $\bar y$ and  $x \in X_1(\bar y,k)$, if
%	$$
%	D_xs(x,y)\cdot v =D_xs(x, \bar y)\cdot v
%	$$ 
%	for all $v \in T_xX_1( \bar y, k)$, then $y=\bar y$.
%\end{assumption}

\begin{assumption}[Enhanced twist]\label{assumption: lower dimensional twist}
	 We say $s \in C^2(\overline X \times \overline Y)$ satisfies the enhanced twist condition if $x \in X$ and $y,\bar y \in Y$ imply
	$$
	D_xs(x,y) - D_xs(x, \bar y)
	$$ 
	cannot be a multiple of $D_{x} s_y(x,\bar y)$ unless $y=\bar y$.
\end{assumption}
Note that the usual twist condition asserts injectivity of the mapping $y \mapsto D_xs(x,y)$;   injectivity of the projection of $D_xs(x,y)$ onto the potential level sets $X_1(y,k)$ of the optimal map is sufficient to imply our enhanced twist condition. 
\begin{lemma}[Map continuity on interior of isodestination set]
\label{lemma: continuity at interior}
	Under the enhanced twist condition, the optimal map is uniquely defined (and therefore continuous) at any  point $x$ in the relative interior of $\partial ^sv(\bar y)$ in $X_1(\bar y,p)$.  
\end{lemma}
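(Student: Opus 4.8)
The plan is to suppose, for contradiction, that the optimal map fails to be single-valued at such an interior point $x$, and derive a violation of the enhanced twist condition. Concretely, fix $\bar y \in Y$ and a point $x$ lying in the relative interior of $\partial^s v(\bar y)$ inside the submanifold $X_1(\bar y, p)$, where $p = D_y s(x,\bar y) = Dv(\bar y)$. Here $\partial^s v(\bar y) := \{x' \in X \mid u(x') + v(\bar y) - s(x',\bar y) = 0\}$ with $u = v^s$, so the statement "$x$ is in the relative interior of $\partial^s v(\bar y)$ in $X_1(\bar y, p)$" means there is a relatively open neighbourhood $U$ of $x$ in $X_1(\bar y,p)$ with $U \subset \partial^s v(\bar y)$. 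If the map were multivalued at $x$, there would be a second point $\bar y \ne y \in \bar Y$ with $x \in \partial_s u(x)$ realizing the supremum, i.e.\ $u(x) + v(y) - s(x,y) = 0$ as well, whence $D_x s(x,y) = Du(x) = D_x s(x,\bar y)$; but this already contradicts the ordinary twist condition. So instead the obstruction must be approached through the \emph{structure} of $\partial_s u$ near $x$, not merely its value at $x$: the issue is whether $Du(x)$ exists, equivalently whether $\partial_s u(x)$ is a singleton.

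The key step, then, is to show that $\partial_s u(x) = \{\bar y\}$. Suppose $\bar y' \in \partial_s u(x)$ with $\bar y' \ne \bar y$. Since $x$ lies in the relative interior of $\partial^s v(\bar y)$ in $X_1(\bar y, p)$, pick a $C^1$ curve $t \mapsto x(t)$ in $X_1(\bar y, p)$ with $x(0) = x$, $x(t) \in \partial^s v(\bar y)$ for all small $t$, and prescribed velocity $x'(0) = w \in T_x X_1(\bar y, p)$ arbitrary. Along this curve $u(x(t)) = s(x(t), \bar y) - v(\bar y)$, so $u$ is $C^1$ along the curve with $\frac{d}{dt}u(x(t)) = D_x s(x(t),\bar y)\cdot x'(t)$; at $t=0$ this gives a lower bound on the subdifferential behaviour. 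On the other hand, $s$-convexity of $u$ and $\bar y' \in \partial_s u(x)$ force $u(x') \ge s(x',\bar y') - u^{\tilde s}(\bar y')$ for all $x'$ with equality at $x$, so differentiating along the curve at $t=0$ yields $D_x s(x,\bar y')\cdot w \le \frac{d}{dt}\big|_{0} u(x(t)) = D_x s(x,\bar y)\cdot w$. Since $w \in T_x X_1(\bar y,p)$ is arbitrary and $T_x X_1(\bar y,p) = \ker D^2_{xy}s(x,\bar y)^T$ is a linear subspace, replacing $w$ by $-w$ forces $[D_x s(x,\bar y') - D_x s(x,\bar y)]\cdot w = 0$ for all $w \in T_x X_1(\bar y,p)$. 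In other words $D_x s(x,\bar y') - D_x s(x,\bar y)$ annihilates the tangent space to $X_1(\bar y,p)$, i.e.\ it lies in the orthogonal complement, which (by non-degeneracy, since $X_1(\bar y,p)$ has codimension $n$ and the rows of $D^2_{xy}s(x,\bar y)$ span the normal space) is the row space of $D^2_{xy}s(x,\bar y)$ — for $n=1$ this is precisely the line spanned by $D_x s_y(x,\bar y)$. Thus $D_x s(x,\bar y') - D_x s(x,\bar y)$ is a multiple of $D_x s_y(x,\bar y)$; but wait — the enhanced twist condition as stated forbids $D_x s(x,y) - D_x s(x,\bar y)$ being a multiple of $D_x s_y(x,\bar y)$, which is the relevant base point after relabelling ($y := \bar y'$, $\bar y := \bar y$); the conclusion is $\bar y' = \bar y$, a contradiction.

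Therefore $\partial_s u(x) = \{\bar y\}$, which by the semiconvexity of $u$ established in Theorem~\ref{T:nonlocal PDE} (and the argument in Lemma~\ref{L:continuity}: convergence of $Du(x_k)$ along any sequence $x_k \in \dom Du \to x$ to the unique element $D_x s(x,\bar y)$) shows $x \in \dom Du$ and $F$ is continuous at $x$ via $F = s\text{-}\exp \circ Du$. I expect the main obstacle to be the careful bookkeeping in the middle step: justifying that $u$ is genuinely $C^1$ along the chosen curve inside $X_1(\bar y,p)$ (this uses that $X_1(\bar y,p)$ is a $C^1$ submanifold by non-degeneracy and Proposition~2 of \cite{ChiapporiMcCannPass17}, and that $s \in C^2$), and correctly identifying the normal space to $X_1(\bar y,p)$ at $x$ with the span of $D_x s_y(x,\bar y)$ when $n=1$ so that the linear-algebra conclusion matches the precise phrasing of Assumption~\ref{assumption: lower dimensional twist}. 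The rest is a routine application of the machinery already developed for Lemma~\ref{L:continuity}.
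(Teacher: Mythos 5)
Your proposal is correct and follows essentially the same route as the paper: on the relative neighbourhood in $X_1(\bar y,p)$ one has $u=s(\cdot,\bar y)-v(\bar y)$, so $u$ is differentiable along $X_1(\bar y,p)$, and the first-order (envelope) condition for a second contact point $\bar y'$ forces $D_{X_1}s(x,\bar y')=D_{X_1}s(x,\bar y)$, i.e.\ $D_xs(x,\bar y')-D_xs(x,\bar y)$ is normal to $X_1(\bar y,p)$ and hence a multiple of $D_xs_y(x,\bar y)$, contradicting the enhanced twist condition; your curve-based derivation with $\pm w$ is just a spelled-out version of the paper's tangential differentiation, and the closing appeal to semiconvexity and the Lemma~\ref{L:continuity} argument for continuity is consistent with the paper's ``uniquely defined (and therefore continuous).'' The opening detour invoking the ordinary twist condition (which presupposes $Du(x)$ exists) is a false start, but you correctly abandon it, so it does not affect the validity of the argument.
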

\begin{proof}
 If $x$ lies in the interior of $\p^s v(\bar y)$ relative to $X_1(\bar y, p)$ then
$u(\tilde x) =s(\tilde x,\bar y) -v(\bar y )$ for all $\tilde x \in X_1(\bar y, p)$ sufficiently close to   $x$.  
Therefore, $u$ is smooth along $X_1(\bar y, p)$ and differentiating we have
	$$
	D_{ X_1(\bar y, p)}u(x) =D_{ X_1(\bar y, p)} s( x,\bar y).
	$$
	On the other hand, if there is another $ y \neq \bar y$ such that $x \in \partial ^sv( y)$, the envelope condition yields
	
	$$
	D_{ X_1(\bar y, p)}u(x) =D_{ X_1(\bar y, p)} s( x, y)
	$$
	and so
	
	$$
	D_{ X_1(\bar y, p)} s( x, y)=D_{ X_1(\bar y, p)} s( x,\bar y),
	$$
	violating the enhanced twist condition.
\end{proof}

\begin{proposition}[Continuous differentiability of $v$]
\label{P: continuous k}
 Let $(v^s,v)$ achieve the minimum \eqref{Kantorovich dual} and solve \eqref{local PDE}--\eqref{G_i} a.e.\ on $Y$ with $i=2$. 
Let $Y' \subseteq Y$ and $X'':=\{x: d(x,X') <\delta \}$ be a neighbourhood of  $X' :=(s$-$\exp \circ D v^s)^{-1}(Y') $ for some $\delta >0$.  If $\|s_y\|_{C^2(\bar  X'' \times \bar Y')}<\infty$,
the enhanced twist condition and bounds \eqref{f bounds}--\eqref{g bounds} on the continuous probability densities $f$ and $g$  imply
continuity of $k=v'$ on $Y'$.
\end{proposition}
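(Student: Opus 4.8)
The plan is to show that the (a.e.-defined) derivative $k = v'$ extends to a continuous function on $Y'$ by contradiction: if $k$ failed to be continuous, one could produce a point $\bar y \in Y'$ where the one-sided limits $k(\bar y^-) =: p_- < p_+ := k(\bar y^+)$ differ (using monotonicity of $k$, which follows from the assumed convexity of $s$ in $y$ and $s$-convexity of $v$). The goal is to derive a mass imbalance. First I would recall from Theorem \ref{T:nonlocal PDE} and the hypothesis $\partial^s v(y) = X_2(y, v'(y), v''(y))$ a.e. that the optimal map $F = s\text{-}\exp \circ Dv^s$ pushes $f$ forward to $g$, and that for $f$-a.e.\ $x$, $F(x) = y$ iff $u(x) + v(y) = s(x,y)$. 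The jump in $k$ at $\bar y$ means that the sublevel structure $X_2(y, k(y), \cdot)$ sweeps, as $y \to \bar y^\pm$, across a set of $x$'s whose $s_y$-values fill the whole interval $[p_-, p_+]$ — concretely, $\bigcup_{p \in [p_-,p_+]} X_1(\bar y, p)$ (up to the sublevel restriction coming from $q$) would be mapped by $F$ into the single destination $\bar y$, or more precisely, assigned potential value consistent with $\bar y$.

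The key steps, in order: (1) Invoke Lemma \ref{lemma: continuity at interior} — under the enhanced twist condition, $F$ is single-valued on the relative interior of $\partial^s v(\bar y)$ inside the level set $X_1(\bar y, p)$, for each $p$. (2) Use the containment $\partial^s v(\bar y) \subseteq X_2(\bar y, k(\bar y), k'(\bar y))$, together with the structure that $\partial^s v(\bar y)$ is a relatively closed subset of the $(m-1)$-dimensional submanifold $X_1(\bar y, v'(\bar y))$ (here $n=1$, so $X_1(\bar y,p)$ has dimension $m-1$), and note that if $k$ jumps at $\bar y$ then $\partial^s v(\bar y)$ must actually contain mass drawn from level sets $X_1(\bar y, p)$ for a \emph{nondegenerate range} of $p$ — equivalently, $F^{-1}(\bar y)$ is not ${\mathcal H}^{m-1}$-negligible but carries positive $m$-dimensional mass, because $s_y$ restricted to $F^{-1}(\bar y)$ is non-constant with image containing $[p_-, p_+]$. (3) Apply the coarea formula (as in Theorem \ref{T:nonlocal PDE}, using nondegeneracy of $s$ to bound $|D_x s_y|$ below and the hypothesis $\|s_y\|_{C^2} < \infty$ to bound it above, plus the lower bound $f \ge L_f$) to conclude ${\mathcal H}^m(F^{-1}(\bar y)) \ge c(p_+ - p_-)\cdot \inf_p {\mathcal H}^{m-1}(X_1(\bar y, p)) > 0$. (4) Since $F_\# f = g$ with $g \le U_g < \infty$ a density against Lebesgue measure on the one-dimensional $Y$, the preimage $F^{-1}(\bar y)$ of the single point $\bar y$ must be $f$-null, hence (as $f \ge L_f > 0$) Lebesgue-null in $\R^m$ — contradicting step (3). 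Therefore $k$ has no jumps; combined with monotonicity this forces continuity on $Y'$.

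The main obstacle I anticipate is step (2)–(3): rigorously converting a jump of $k$ at $\bar y$ into a genuinely $m$-dimensional (not merely $(m-1)$-dimensional) chunk of $F^{-1}(\bar y)$. The subtlety is that a priori $\partial^s v(\bar y)$ lives inside the single level set $X_1(\bar y, v'(\bar y))$, which is only $(m-1)$-dimensional; the resolution is that the relevant object is not $\partial^s v(\bar y)$ for the fixed value $v'(\bar y)$ at the jump, but rather the union over the destination-preimages as $p$ ranges over $[p_-, p_+]$ — one must carefully track that, because $v$ is only $C^1$ (not $C^2$) at $\bar y$ and $v'$ has a jump there, the $s$-convex envelope forces $u(x) = s(x, \bar y) - v(\bar y)$ for \emph{all} $x$ with $s_y(x,\bar y) \in [p_-, p_+]$ (and $s_{yy}$ below the appropriate threshold), which is exactly the statement that $F(x) = \bar y$ on an $m$-dimensional set. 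Making this envelope argument precise — essentially a one-dimensional convexity fact: if a convex function $v$ of one variable has a kink in its derivative, the subdifferential of its $s$-dual conjugate at the kink point is "fat" — and checking it interacts correctly with the $X_2$ (rather than $X_1$) truncation using $g \le U_g$ to control $q(\bar y, p, g(\bar y))$, is where the real work lies; the enhanced twist condition via Lemma \ref{lemma: continuity at interior} is what prevents this fat set from secretly being shared with other destinations and thereby escaping the mass-balance contradiction.
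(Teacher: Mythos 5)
Your overall contradiction target --- a jump of $k$ at $\bar y$ forces a set of positive $m$-dimensional Lebesgue measure to be mapped to the single point $\bar y$, which is impossible since $F_\#f=g$ is atomless --- is exactly the paper's. But the step that produces this fat set is not justified by your ``one-dimensional convexity fact.'' You claim that $s_y(x,\bar y)\in[p_-,p_+]$ together with a second-order threshold on $s_{yy}(x,\bar y)$ forces $u(x)+v(\bar y)=s(x,\bar y)$, i.e.\ $F(x)=\bar y$. For the bilinear cost this is true because $y\mapsto x\cdot y - v(y)$ is concave when $v$ is convex, so the first-order condition is sufficient; here, however, $y\mapsto s(x,y)-v(y)$ is a difference of convex functions, and membership of $s_y(x,\bar y)$ in the subdifferential interval $[p_-,p_+]$, even supplemented by a second-order condition at $\bar y$, only makes $\bar y$ a critical point (at best a local maximum) of that function --- the global maximum defining $u(x)=v^s(x)$ may be attained at a distant $y'$. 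This is precisely the non-nested phenomenon this section is built to handle, so it cannot be waved away. Nor can your appeal to Lemma \ref{lemma: continuity at interior} repair it: that lemma applies only at points already known to lie in the relative interior of $\partial^s v(\bar y)$ inside $X_1(\bar y,p)$, so invoking it presupposes the very containment (``an $X_1$-relatively open chunk belongs to $\partial^s v(\bar y)$'') that you are trying to establish.

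The paper closes this gap by a different mechanism, and some substitute for it is needed in your plan. It takes a sequence $y_i\uparrow \bar y$ at which the a.e.\ equation $g(y_i)=G_2(y_i,k(y_i),k'(y_i))$ and $\partial^s v(y_i)=X_2(y_i,k(y_i),k'(y_i))$ hold, and uses the mean value theorem for integrals together with the bounds $L_g\le g$, $f\le U_f$, $|D_xs_y|\ge c$, $\mathcal H^{m-1}(X_1)\le C$ to find balls $B_r(x_i)$ of uniform radius on which $k'(y_i)-s_{yy}\ge\beta>0$; passing to the limit yields a point $\bar x$ with $B_r(\bar x)\cap X_1(\bar y,k_0)\subset\partial^s v(\bar y)$ and the uniform estimate $k'(y)-s_{yy}(y,x)\ge\beta$ for $x$ near $\bar x$ and a.e.\ $y<\bar y$ near $\bar y$. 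Only at that stage does the enhanced twist (Lemma \ref{lemma: continuity at interior}) give continuity of $F$ at $\bar x$, and then the $\beta$-estimate, integrated in $y$, excludes both $F(x)>\bar y$ and $F(x)<\bar y$ for $x$ near $\bar x$ with $s_y(x,\bar y)>k_0$, so an open set is sent to $\bar y$ and mass balance is violated. Your steps (3)--(4) (positive measure of the fat set, no atoms of $g$) are fine and even simpler than you make them (no coarea needed once the set is open), but as written step (2) is a genuine gap, and the role you assign to the enhanced twist condition is misplaced relative to where it must actually enter.
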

%{\blu BP: I think we can localize the assumption $\|s_y\|_{C^2(\bar  X \times \bar Y)}<\infty$ to $\|s_y\|_{C^2(\bar  X'' \times \bar Y')}<\infty$, where $Y'\subset Y$ is a region of interest, and $X'':=\{x: d(x,X') <\epsilon \}$ is an $\epsilon$-neighbourhood of  $X' :=(s$-$\exp \circ D v^s)^{-1}(Y') $.  }

\begin{proof}
 Without loss of generality, assume $s_{yy} \ge 0$ so that $k$ is monotone increasing as before.
We  need only rule out jump discontinuities.	
Suppose $k$ has a jump discontinuity at $\bar y$, with left and right limits $k_0$ and $k_1$, respectively.
	
	For any $y$ where $k$ is differentiable with
	$g(y) =G_2(y, k(y), k'(y))$ and $X_2(y, k(y), k'(y)) =\partial^s  v(y)$,    the mean value theorem
	for integrals yields $x \in \p^s v(y)$ at which
$$
0< L_g \le g(y) = \frac{k'(y) - s_{yy}(x,y)}{|D_x s_y|} f(x) \mathcal H^{m-1}[X_2(y,k(y),k'(y))].
$$
Thus 	%we have $k'(y) -s_{yy}(x,y)\geq \beta >0$ at some $x \in X_2$, which means 
$k'(y) -s_{yy}(x,y) \geq  \beta$ holds throughout a ball of radius $r$ in $X$, where  
$\beta:=\frac{c L_g }{ 2CU_f}>0$ with $c:=\min|D_xs_{y}(x,y)|$,
	$C:=\sup_{(y,k) \in Y \times k(Y)} \mathcal H^{m-1}(\overline{X_1(y,k)})$, and
	 $r$ depends only on $\delta $ and $B:=\sup_{(x,y) \in X" \times Y'}|D_xs_{y  y}(x,y)|$.
	
	Now take a sequence $\{y_i\}$ with $y_i <\bar y$ for which this is true, converging to $\bar y$. We have that $k(y_i) \rightarrow k_0$ and, after passing to a subsequence, the centers $x_i$ of the corresponding balls converge to an $\bar x \in X_1(\bar y, k_0) \cap \partial ^sv(y)$.  By continuity, we have $q(\bar y, k_0, g(\bar y)) -s_{yy}(\bar y,\bar x) \geq { 2\beta}>0$, and therefore,  $q(y, k(y), g(y)) -s_{yy}( y, x) \geq { \beta}>0$ for all
	$(x,y)$ close to $(\bar x, \bar y)$ with $y < \bar y$.

	Therefore,
	\begin{equation}\label{eqn: local uniform s-convexity}
	k'(y) -s_{yy}( y, x) \geq  \beta>0
	\end{equation}
	for all $x$ near $\bar x$, and almost all $y <\bar y$ near $\bar y$.

	 In addition, since $B_r(x_i) \cap X_1(y_i, k(y_i)) \subseteq \partial^sv(y_i)$, and each $x \in  B_r(\bar x) \cap X_1(\bar y, k_0)$ %sufficiently close to $\bar x$ 
	 can be approximated by points $z_i(x) \in B_r(x_i) \cap X_1(\bar y_i, k(y_i))$, we can pass to the limit in the equality $u(z_i(x)) +v(y_i)=s(z_i(x), y_i)$ to obtain $u(x) +v(\bar y)=s(x, \bar y)$; that is, $x \in \partial^sv(\bar y)$.  Therefore, $B_{ r}(\bar x) \cap X_1(\bar y, k_0) \subset \partial^sv(\bar y)$. % for some $r' > 0$.

	 We have now shown that $\bar x$ in the relative interior of $\p^s v(\bar y)$ in $X_1(\bar y, k_0)$.
	Lemma \ref{lemma: continuity at interior}  therefore implies that the optimal map $F$ is continuous at $\bar x$.
	We  next show that all points in the open set $X_>(\bar y, k_0)  := \{ x \in X \mid s_y(x,\bar y) > k_0 \}$ 
	sufficiently near $\bar x$ must get mapped to $\bar y$; this violates mass balance and establishes the result.
	%Find $\bar x \in X_1(\bar y, k_0) \cap F^{-1}(\bar y)$ where $F$ is continuous (is this possible?).  
	
	 Choose $x$ with $s_y(x, \bar y) >k_0$ such that $|\bar x -x| < \epsilon$,  and set $y = F(x)$. 
	The continuity of $F$ at $\bar x$ ensures $y$ is close to $\bar y$;  for $\epsilon>0$ sufficiently small
         we shall prove it must actually be equal to $\bar y$.  First observe
	$s_y(x, y) <k_1$  for $\epsilon>0$ sufficiently small, since $s_y(\bar x, \bar y) =k_0 <k_1$.
	
	If $y>\bar y$, then $k(y) >k_1$.  In this case, $s_y(x, y)=k(y) >k_1$, immediately yielding a contradiction.
	
	On the other hand, if $y<\bar y$, then \eqref{eqn: local uniform s-convexity} implies that
	\begin{eqnarray*}
		k_0-s_y(\bar x, \bar  y) - [k(y) -s_y(\bar x, y)] &\geq& \int_y^{\bar y} [k'(s) -s_{yy}(\bar x, s)]ds\\
		& \geq &  \beta|\bar y - y|	
	\end{eqnarray*}
	
	As $k_0 =s_y(\bar x, \bar  y)$, this means, for almost every $y <\bar y$, with $y$ close to $\bar y$
	$$
	s_y(\bar x, y)-s_y(x, y) =s_y(\bar x, y)-k(y)  \geq  \beta |\bar y - y|.
	$$
	Therefore,
	
	\begin{eqnarray*}
		s_y(\bar x, \bar y) -s_y(x, \bar y ) &=& 	s_y(\bar x,  y) -s_y(x, y ) + \int_y^{\bar y} [s_{yy}(\bar x,t) -s_{yy}(x,t)]dt \\
		&\geq & { \beta} |\bar y -y|-B|\bar y -y||\bar x -x|\\
		&=& |\bar y -y|({ \beta}-B|\bar x -x|)>0.
	\end{eqnarray*}
	for $|x-\bar x|$ sufficiently small.  This contradicts that assumption   $x \in X_{>}(\bar y,k_0)$.
	
	To summarize, we have shown that for $x \in X_{>}(\bar y,k_0)$ close to $\bar x$, we cannot have $F(x) >\bar y$ or $F(x) <\bar y$; we must therefore have $F(x) =\bar y$.  As this set has positive mass, and $\nu(\{\bar y\}) =0$, this violates mass balance, establishing that $k$ cannot have a jump discontinuity.
	%	It therefore follows that for $y <\bar y$ close to $\bar y$, the level sets $X_1(y,k(y))$ do not intersect $X_{>}(y,k_0)$ near $\bar x$. 
	%	 So by continuity of $F$ at $\bar x$ (NEED TO PROVE THIS) points near $\bar x$ in $X_{>}(y,k_0)$ must get mapped to $\bar y$.  This contradicts mass balance, if $\nu$ has no atoms and $f$ a lower bound.
\end{proof}

\begin{theorem}[Lipschitz differentiability of $v$]
\label{T: Lipschitz k}
 Fix open sets $X \subset \R^m$ and $Y \subset \R$ equipped with continuous probability densities which are bounded 
away from zero and infinity.
Let $(v^s,v)$ achieve the minimum \eqref{Kantorovich dual} and solve \eqref{local PDE}--\eqref{G_i} a.e.\ on $Y$ with $i=2$.
%Let $s \in C^2(\overline X \times \overline Y)$ (and $s_{yy}  \ge 0$ througout $X \times Y$ without loss of generality).
Let $Y' \subset Y$ and  $P' = v'(Y')$ be regions such that $\inf_{y \in Y', p \in P'} \mathcal{H}^{m-1}(X_1(y,p)) >0$  and $X''$ as in Proposition \ref{P: continuous k}.  
Under the enhanced twist condition, if $\|s_y\|_{C^2(\bar  X'' \times \bar Y')}<\infty$ then $v \in C^{1,1}(Y')$.	
%Assume that Assumption \ref{assumption: lower dimensional twist} holds.  Then $k(y)$ is Lipschitz.
\end{theorem}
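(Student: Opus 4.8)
The plan is to upgrade the continuity of $k:=v'$ (already available from Proposition~\ref{P: continuous k}) to Lipschitz continuity by playing Proposition~\ref{P: X_2 structure} against mass balance; since $v'=k$, a locally Lipschitz $k$ is precisely the statement $v\in C^{1,1}(Y')$. As throughout Section~\ref{S:ODE} we may assume $s_{yy}\ge 0$, so that $k$ is monotone non-decreasing, and since Lipschitz regularity is local it suffices to produce a constant $L$ with $|k(y_0)-k(y_1)|\le L|y_0-y_1|$ for every interval $[y_0,y_1]\subseteq Y'$. The hypotheses assumed here --- the enhanced twist condition (Assumption~\ref{assumption: lower dimensional twist}), the two-sided density bounds \eqref{f bounds}--\eqref{g bounds}, and $\|s_y\|_{C^2(\bar X''\times\bar Y')}<\infty$ --- are exactly those needed to invoke Proposition~\ref{P: continuous k}, which therefore gives that $k=v'$ is continuous on $Y'$, i.e.\ $v\in C^1(Y')$.

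Knowing that $k:Y'\to P'=v'(Y')$ is continuous and monotone, and using the hypothesis $\inf_{y\in Y',\,p\in P'}\mathcal H^{m-1}(X_1(y,p))>0$, Proposition~\ref{P: X_2 structure} supplies constants $C_1,C_2>0$ with
\[
\mathcal H^{m}\Big(\bigcup_{y\in[y_0,y_1]}X_2\big(y,k(y),q(y,k(y),g(y))\big)\Big)\ \ge\ C_1\,|k(y_0)-k(y_1)|-C_2\,|y_0-y_1|
\]
for every $[y_0,y_1]\subseteq Y'$. Since the left-hand side is now the quantity to be bounded \emph{from above}, the remaining task is to recognize this union as (essentially) $F^{-1}([y_0,y_1])$, where $F:=s$-$\exp\circ Dv^s$ is the optimal map of Theorem~\ref{T:nonlocal PDE}, and then to estimate the latter.

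For the upper bound we combine two facts. First, mass balance: $F_\#f=g$ gives $\int_{F^{-1}([y_0,y_1])}f\,dx=\int_{[y_0,y_1]}g\,dy\le U_g|y_0-y_1|$, and the lower bound $f\ge L_f$ then yields $\mathcal H^m(F^{-1}([y_0,y_1]))\le (U_g/L_f)\,|y_0-y_1|$. Second, the containment $\bigcup_{y\in[y_0,y_1]}X_2(y,k(y),q(y,k(y),g(y)))\subseteq F^{-1}([y_0,y_1])$ modulo a Lebesgue-null set: by the standing hypothesis of this section $\partial^s v(y)=X_2(y,v'(y),v''(y))$ for a.e.\ $y$, and $q(y,k(y),g(y))=k'(y)=v''(y)$ for a.e.\ $y$, so $X_2(y,k(y),q(y,k(y),g(y)))=\partial^s v(y)$ for a.e.\ $y\in Y'$; for such $y$ the twist condition forces $\partial^s v(y)\cap\dom Du=F^{-1}(y)$ (only the optimal destination $F(x)$ can saturate $u(x)+v(\cdot)-s(x,\cdot)\ge 0$), while $X\setminus\dom Du$ is Lebesgue-null. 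The exceptional (Lebesgue-null in $\R$) set $N$ of $y$'s contributes only $\bigcup_{y\in N}X_2(y,k(y),q(y,k(y),g(y)))\subseteq\bigcup_{y\in N}X_1(y,k(y))$, and a Fubini/coarea argument --- using that each $X_1(y,p)$ is a codimension-one submanifold, hence $\mathcal H^m$-null, together with the non-degeneracy $D_xs_y\ne 0$ which forces the level sets $X_1(y,k(y))$ to sweep with finite normal speed as $y$ varies --- shows this last union is $\mathcal H^m$-negligible.

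Assembling the inequalities, $C_1|k(y_0)-k(y_1)|-C_2|y_0-y_1|\le (U_g/L_f)|y_0-y_1|$, so $|k(y_0)-k(y_1)|\le C_1^{-1}(C_2+U_g/L_f)|y_0-y_1|$; thus $k=v'$ is locally Lipschitz on $Y'$ and $v\in C^{1,1}(Y')$. I expect the delicate point to be the Lebesgue-null identification in the third paragraph --- in particular disposing of $\bigcup_{y\in N}X_1(y,k(y))$ over the set $N$ where $v$ fails to be twice differentiable, since there $k$ is only known to be continuous and the coarea/implicit-function bookkeeping must be carried out with that limited regularity; the rest of the argument is either routine or a direct appeal to Propositions~\ref{P: continuous k} and \ref{P: X_2 structure}.
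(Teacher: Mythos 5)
Your argument is correct and follows essentially the same route as the paper's proof: continuity of $k=v'$ from Proposition \ref{P: continuous k}, the volume lower bound of Proposition \ref{P: X_2 structure}, and mass balance together with the a.e.\ identification $X_2(y,k(y),q(y,k(y),g(y)))=\partial^s v(y)$ to bound the volume of $\cup_{y\in[y_0,y_1]}X_2$ by $(U_g/L_f)\,|y_1-y_0|$, which yields the Lipschitz bound on $k$. The paper compresses this into a one-line chain of (in)equalities asserting the mass identity directly, so your extra discussion of the exceptional null set of $y$'s is added detail beyond, not in conflict with, the published argument.
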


\begin{proof}
	Setting $k=v'$ yields $k'(y) =v''(y) = q(y,k(y), g(y))$ almost everywhere.  Choose $y_0 <y_1$ and denote $k(y_i)=k_i$ for $i=1,2$.  Since $\partial ^sv(y) =X_2(y,k(y), k'(y)) =X_2(y,k(y), q(y,k(y), g(y)))$ for a.e. $y$, mass balance and 
	Propositions \ref{P: X_2 structure}   and \ref{P: continuous k} combine to imply
	\begin{eqnarray}U_g|y_1-y_0|&{ \ge}&
	\int_{y_0}^{y_1}g(y)dy\\
	& { =} & \int_{\cup_{y \in [y_0,y_1]}X_2(y,k,q(y,k(y), g(y)){ )}}f(x)d{\mathcal H}^{m}(x)\\
	&\geq& L_fvol[\cup_{y \in [y_0,y_1]}X_2(y,k,q(y,k(y), g(y)){ )}]\\
	%Proposition \ref{P: continuous k}
	&\geq &L_f[ C_1|k(y_0)-k(y_1)| -C_2|y_0-y_1|].
	\end{eqnarray}
	
	This is the desired conclusion.
\end{proof}

\begin{remark}\label{rem: higher order smoothness}
	Combined with Corollary \ref{cor: C11 to C21}, this yields conditions under which any solution to the $i=2$ local equation is $C^{2,1}$, or in fact smoother, depending on $G_2$
\end{remark}

\begin{appendices}
	\section{Formulas for partial derivatives}
	\label{appendix: derivatives}
	
	The partial derivatives of the functions defined in Lemma \ref{lemma: derivatives of X_2 integrals} are given almost everywhere by the following formulas, with $w(x,y)=|D_x s_y|^{-1}$:
	\begin{eqnarray*}
		A_p - &&\int_{X_2} a_p d{\mathcal H}^{m-1} = \frac{\p}{\p \tilde p}\bigg|_{\tilde p= p} \int_{W_=(y,\tilde p) \cap Z_\le(y,q)} 
		a(x,y,p,q) \hat n_W \cdot \hat n_W d{\mathcal H}^{m-1}(x)
		\\ &=&  \frac{\p}{\p \tilde p}\bigg[ \int_{W_\le \cap Z_\le} \nabla \cdot (a\hat n_W)  d{\mathcal H}^{m}-
		\int_{W_\le \cap Z_=} a \hat n_W \cdot \hat n_Z d{\mathcal H}^{m-1}
		- \int_{\overline{{W_\le \cap Z_\le}} \cap \p X} a \hat n_W \cdot \hat n_X d{\mathcal H}^{m-1}
		\bigg]_{\tilde p=p}
		\\ &=& \int_{W_= \cap Z_\le} \nabla \cdot (a\hat n_W) w d{\mathcal H}^{m-1}
		- \int_{W_=\cap Z_=}  \frac{aw \hat n_W \cdot \hat n_Z}{\sqrt{1 - (\hat n_W \cdot \hat n_Z)^2}} d{\mathcal H}^{m-2}
		\\ && - \int_{\overline{{W_= \cap Z_\le}} \cap \p X} \frac{aw \hat n_W \cdot \hat n_X}{\sqrt{1 - (\hat n_W \cdot \hat n_X)^2}}d{\mathcal H}^{m-2},
	\end{eqnarray*}
	
	\begin{eqnarray*}
		A_q - &&\int_{X_2} a_q d{\mathcal H}^{m-1}= \frac{\p}{\p \tilde q}\bigg|_{\tilde q= q} \int_{W_=(y,p) \cap Z_\le(y,\tilde q)} a(x,y,p,q)  d{\mathcal H}^{m-1}(x)
		\\ &=&  \int_{W_= \cap Z_=} \frac{az}{\sqrt{1 - (\hat n_Z \cdot \hat n_W)^2}}d{\mathcal H}^{m-2},
	\end{eqnarray*}

	\begin{eqnarray*}
		A_y - &&\int_{X_2} a_y d{\mathcal H}^{m-1} + \int_{W_= \cap Z_=} \frac{a z s_{yyy }}{\sqrt{1-(\hat n_Z \cdot \hat n_W)^2}}  d{\mathcal H}^{m-2}
		= \frac{\p}{\p \tilde y}\bigg|_{\tilde y= y} \int_{W_=(\tilde y,p) \cap Z_\le(y,q)} %+  \int_{W_=(y,p) \cap Z_\le(\tilde y,q)}
		a(x,y,p,q) d{\mathcal H}^{m-1}(x) %\bigg]_{\tilde y= y} 
		\\ &=&  \frac{\p}{\p \tilde y}\bigg[ \int_{\tilde W_\le \cap Z_\le} \nabla \cdot (a\hat n_{\tilde W})  d{\mathcal H}^{m}-
		\int_{{\tilde W}_\le \cap Z_=} a \hat n_{\tilde W} \cdot \hat n_Z d{\mathcal H}^{m-1}
		- \int_{\overline{{{\tilde W}_\le \cap Z_\le}} \cap \p X} a \hat n_{\tilde W} \cdot \hat n_X d{\mathcal H}^{m-1}
		\bigg]_{\tilde y=y}
		\\ &=& \int_{W_\le \cap Z_\le} \nabla \cdot (a \frac{\p \hat n_W}{\p y})  d{\mathcal H}^{m}
		- \int_{{W}_\le \cap Z_=} a \frac{\p \hat n_W}{\p y} \cdot \hat n_Z d{\mathcal H}^{m-1}
		- \int_{\overline{{{W}_\le \cap Z_\le}} \cap \p X} a \frac{\p \hat n_W}{\p y} \cdot \hat n_X d{\mathcal H}^{m-1}
		\\ && - \int_{W_= \cap Z_\le} \nabla \cdot (a\hat n_W) w s_{yy}  d{\mathcal H}^{m-1}
		+ \int_{W_=\cap Z_=}  \frac{aws_{yy} \hat n_W \cdot \hat n_Z}{\sqrt{1 - (\hat n_W \cdot \hat n_Z)^2}} d{\mathcal H}^{m-2}
		\\ && + \int_{\overline{{W_= \cap Z_\le}} \cap \p X} \frac{aws_{yy} \hat n_W \cdot \hat n_X}{\sqrt{1 - (\hat n_W \cdot \hat n_X)^2}}d{\mathcal H}^{m-2}
	\end{eqnarray*}
	where $\frac{\p}{\p y} \hat n_W =  (\hat n_Z - \hat n_W (\hat n_W \cdot \hat n_Z))w/z$,

%	Similarly, the derivatives of $$B(y,p,q):= \int_{W_\leq(y,p) Z_\leq(y,q)} b(x,y,p,q) d{\mathcal H}^{m}(x)$$ are
	\begin{eqnarray*}
		B_p=\int_{W_\leq(y,p)\cap Z_\leq(y,q)} b_p d{\mathcal H}^{m}(x) +\int_{W_=(y,p)\cap Z_\leq(y,q)} b wd{\mathcal H}^{m-1}(x),
	\end{eqnarray*}
	
	\begin{eqnarray*}
		B_q=\int_{W_\leq(y,p)\cap Z_\leq(y,q)} b_q d{\mathcal H}^{m}(x) +\int_{W_\leq(y,p)\cap Z_=(y,q)} b zd{\mathcal H}^{m-1}(x),
	\end{eqnarray*}
	%and
	\begin{eqnarray*}
		B_y=\int_{W_\leq(y,p)\cap Z_\leq(y,q)} b_y d{\mathcal H}^{m}(x) -\int_{W_=(y,p)\cap Z_\leq(y,q)} b ws_{yy}d{\mathcal H}^{m-1}(x)-\int_{W_\leq(y,p)\cap Z_=(y,q)} b zs_{yyy}d{\mathcal H}^{m}(x),
	\end{eqnarray*}
	%For 
	
%	$$C(y,p,q):= \int_{W_{\leq}(y,p) \cap Z_{=}(y,q)} c(x,y,p,q) d{\mathcal H}^{m-1}(x),$$
%	the derivatives are analogous to those of $A$:
	$$C_p =\int_{W_{\leq}(y,p) \cap Z_{=}(y,q)} c_pd{\mathcal H}^{m-1}(x) +\int_{W_{=}(y,p) \cap Z_{=}(y,q)} cwd{\mathcal H}^{m-2}(x),$$
	
	\begin{eqnarray*}
		C_q&=&\int_{W_{\leq}(y,p) \cap Z_{=}(y,q)} c_q d{\mathcal H}^{m-1}(x)+\int_{W_{\leq}(y,p) \cap Z_{=}(y,q)} \nabla \cdot (c\hat n_Z)z d{\mathcal H}^{m-1}(x)\\
		&&-\int_{W_{=}(y,p) \cap Z_{=}(y,q)} \frac{cz\hat n_Z \cdot \hat n_W}{{\sqrt{1 - (\hat n_{ \p, W} \cdot \hat n_{ \p, Z})^2}}} d{\mathcal H}^{m-2}(x)-  \int_{\overline{{W_\leq \cap Z_=}} \cap \p X} \frac{cz \hat n_Z \cdot \hat n_X}{\sqrt{1 - (\hat n_Z \cdot \hat n_X)^2}}d{\mathcal H}^{m-2},
	\end{eqnarray*}
	
%	and
	
	\begin{eqnarray*}
		C_y = &&\int_{W_\leq \cap Z_=} c_y d{\mathcal H}^{m-1} - \int_{W_= \cap Z_=} \frac{c w s_{yy }}{\sqrt{1-(\hat n_Z \cdot \hat n_W)^2}}  d{\mathcal H}^{m-2}
		%\cigg]_{\tilde y= y} 
		\\ &+& \int_{W_\le \cap Z_\le} \nabla \cdot (c \frac{\p \hat n_Z}{\p y})  d{\mathcal H}^{m}
		- \int_{{W}_= \cap Z_\le} c \frac{\p \hat n_Z}{\p y} \cdot \hat n_W d{\mathcal H}^{m-1}
		- \int_{\overline{{{W}_\le \cap Z_\le}} \cap \p X} c \frac{\p \hat n_Z}{\p y} \cdot \hat n_X d{\mathcal H}^{m-1}
		\\ && - \int_{W_\le \cap Z_=} \nabla \cdot (c\hat n_Z) z s_{yyy}  d{\mathcal H}^{m-1}
		+ \int_{W_=\cap Z_=}  \frac{czs_{yyy} \hat n_W \cdot \hat n_Z}{\sqrt{1 - (\hat n_W \cdot \hat n_Z)^2}} d{\mathcal H}^{m-2}
		\\ && + \int_{\overline{{W_\le \cap Z_=}} \cap \p X} \frac{czs_{yyy} \hat n_Z \cdot \hat n_X}{\sqrt{1 - (\hat n_Z \cdot \hat n_X)^2}}d{\mathcal H}^{m-2},
	\end{eqnarray*}

%		where $\re \frac{\p}{\p y} \hat n_W =  ?$,
%		\marginpar{BP: I'm not sure if we should supply formulae for $\frac{\p}{\p y} \hat n_Z$, $\frac{\p}{\p y} \hat n_{\p, Z}$, and $\frac{\p}{\p y} \hat n_{\p, W}$.  The earlier formula for $\frac{\p}{\p y} \hat n_Z$ turns out to be quite clean, but we could remove it as well for consistency.}
			
	\begin{eqnarray*}
		A^\partial_p& =& \int_{(\overline{W_= \cap Z_{\leq})}\cap \partial X)} a^\p_p d{\mathcal H}^{m-2}(x) +\int_{(\overline{W_= \cap Z_{\leq})}\cap \partial X)} \frac{w}{\sqrt{1-(\hat n_W \cdot \hat n_X)^2}} \nabla_{\partial X} \cdot (a^\p \hat n_{\partial, W})d{\mathcal H}^{m-2}(x)\\
		&-&\int_{(\overline{W_= \cap Z_{=})}\cap \partial X)} a^\p\hat n_{\partial, W}\cdot \hat n_{\partial, Z}\frac{w}{\sqrt{[1-(\hat n_W \cdot \hat n_X)^2][1-(\hat n_{ \p, W} \cdot \hat n_{ \p, Z})^2]}} d{\mathcal H}^{m-3}(x),
	\end{eqnarray*}
	
	\begin{eqnarray*}
		A^\partial_q& =& \int_{(\overline{W_= \cap Z_{\leq})}\cap \partial X)} a^\p_q d{\mathcal H}^{m-2}(x) +\int_{(\overline{W_= \cap Z_{=})}\cap \partial X)} a^\p \frac{z}{\sqrt{[1-(\hat n_Z \cdot \hat n_X)^2][1-(\hat n_{ \p, W} \cdot \hat n_{ \p,Z})^2]}}d{\mathcal H}^{m-3}(x),\\
	\end{eqnarray*}
%	and 
	
	\begin{eqnarray*}
		A^\partial_y& =&  \int_{(\overline{W_= \cap Z_{\leq})}\cap \partial X)} a^\p_y d{\mathcal H}^{m-2}(x) -\int_{(\overline{W_= \cap Z_{=})}\cap \partial X)} a^\p \frac{zs_{yyy}}{\sqrt{[1-(\hat n_Z \cdot \hat n_X)^2][1-(\hat n_{ \p,W} \cdot \hat n_{ \p,Z})^2]}}d{\mathcal H}^{m-3}(x)\\
		&+&\int_{(\overline{W_\leq \cap Z_{\leq})}\cap \partial X)} \nabla_{\partial X} \cdot (a^\p 
		\frac{\partial \hat n_{\partial, W}}{\partial y})
		d{\mathcal H}^{m-1}(x) -\int_{(\overline{W_\le \cap Z_{=}}\cap \partial X)} a^\p\frac{\partial \hat n_{\partial, W}}{\partial y}\cdot \hat n_{\partial, Z} d{\mathcal H}^{m-2}(x)\\
		& +&%\int_{(\overline{W_\le \cap Z_{=}}\cap \partial X)} a\hat n_{\partial, W}\cdot  \frac{\partial \hat n_{\partial, Z}}{\partial y}d{\mathcal H}^{m-2}(x)
		\int_{(\overline{W_= \cap Z_{\leq})}\cap \partial X)}\frac{ws_{yy}}{\sqrt{1-(\hat n_W \cdot \hat n_X)^2}} \nabla_{\partial X} \cdot (a^\p \hat n_{\partial, W})d{\mathcal H}^{m- 2}(x)
		\\ &-&\int_{(\overline{W_= \cap Z_{=})}\cap \partial X)}\frac{ws_{yy}}{\sqrt{[1-(\hat n_W \cdot \hat n_X)^2][1-(\hat n_{ \p,W} \cdot \hat n_{ \p,Z})^2]}} a^\p\hat n_{\partial, W}\cdot \hat n_{\partial, Z} d{\mathcal H}^{m-3}(x),
	\end{eqnarray*}	
where $\hat n_{\partial, W} :=\frac{\hat n_W -(\hat n_W \cdot \hat n_X)\hat n_X}{\sqrt{1-(\hat n_W \cdot \hat n_X)^2}}$ and $\hat n_{\partial, Z} :=\frac{\hat n_Z -(\hat n_Z \cdot \hat n_X)\hat n_X}{\sqrt{1-(\hat n_Z \cdot \hat n_X)^2}}$ are the outward unit normals to $\bar W_\le \cap \partial X$ and  $\bar Z_\le \cap \partial X$ in $\partial X$, respectively and
% $\re \frac{\p}{\p y} \hat n_{\p, W} =  ?$,
		
%	Similarly, for
%	$$
%	B^\partial(y,p,q) =\int_{(\overline{W_\le \cap Z_{\le})}\cap \partial X)} b(x,y,p,q) d{\mathcal H}^{m-1}(x)
%	$$
%	we have
	
	$$
	B^\partial_p =\int_{(\overline{W_\le \cap Z_{\le})}\cap \partial X)} b^\p_p d{\mathcal H}^{m-1}(x) +\int_{(\overline{W_= \cap Z_{\le})}\cap \partial X)} \frac{b^\p w}{\sqrt{1-(\hat n_W \cdot \hat n_X)^2}} d{\mathcal H}^{m-2}(x),
	$$
	
	$$
	B^\partial_q =\int_{(\overline{W_\le \cap Z_{\le})}\cap \partial X)} b^\p_q d{\mathcal H}^{m-1}(x) +\int_{(\overline{W_\le\cap Z_{=})}\cap \partial X)} \frac{b^\p z}{\sqrt{1-(\hat n_Z \cdot \hat n_X)^2}} d{\mathcal H}^{m-2}(x),
	$$
	
%	and
	
	\begin{eqnarray*}
		B^\partial_y &=&\int_{(\overline{W_\le \cap Z_{\le})}\cap \partial X)} b^\p_y d{\mathcal H}^{m-1}(x) -\int_{(\overline{W_= \cap Z_{\le})}\cap \partial X)} \frac{b^\p ws_{yy}}{\sqrt{1-(\hat n_W \cdot \hat n_X)^2}} d{\mathcal H}^{m-2}(x)\\
		&-&\int_{(\overline{W_\le\cap Z_{=})}\cap \partial X)} \frac{b^\p zs_{yyy}}{\sqrt{1-(\hat n_Z \cdot \hat n_X)^2}} d{\mathcal H}^{m  -2}(x),
	\end{eqnarray*}
%	Finally, for 
%	\begin{eqnarray*}
%		C^\partial(y,p,q) &=&\int_{(\overline{W_\le \cap Z_{=})}\cap \partial X)} c(x,y,p,q) d{\mathcal H}^{m-2}(x)\\
%		&=&\int_{(\overline{W_\leq \cap Z_{\leq})}\cap \partial X)} \nabla_{\partial X} \cdot (c \hat n_{\partial, Z})d{\mathcal H}^{m-1}(x) -\int_{(\overline{W_= \cap Z_{\le})}\cap \partial X)} c\hat n_{\partial, Z}\cdot \hat n_{\partial, W} d{\mathcal H}^{m-2}(x),
%	\end{eqnarray*}
%	we obtain
	
	\begin{eqnarray*}
		C^\partial_p& =& \int_{(\overline{W_\le \cap Z_{=})}\cap \partial X)} c^\p_p d{\mathcal H}^{m-2}(x) +\int_{(\overline{W_= \cap Z_{=})}\cap \partial X)} c^\p \frac{w}{\sqrt{[1-(\hat n_W \cdot \hat n_X)^2][1-(\hat n_{ \p,Z} \cdot \hat n_{ \p,W})^2]}}d{\mathcal H}^{m-3}(x),
	\end{eqnarray*}
	
	\begin{eqnarray*}
		C^\partial_q &=& \int_{(\overline{W_\le \cap Z_{=})}\cap \partial X)} c^\p_q d{\mathcal H}^{m-2}(x) +\int_{(\overline{W_\leq \cap Z_{=})}\cap \partial X)} \frac{z}{\sqrt{1-(\hat n_Z \cdot \hat n_X)^2}} \nabla_{\partial X} \cdot (c^\p \hat n_{\partial, Z})d{\mathcal H}^{m-2}(x)\\
		&-&\int_{(\overline{W_= \cap Z_{=})}\cap \partial X)} c^\p\hat n_{\partial, Z}\cdot \hat n_{\partial, W}\frac{z}{\sqrt{[1-(\hat n_Z \cdot \hat n_X)^2][1-(\hat n_{ \p,Z} \cdot \hat n_{ \p,W})^2]}} d{\mathcal H}^{m-3}(x),
	\end{eqnarray*}
	and
	
	\begin{eqnarray*}
		C^\partial_y& =&  \int_{(\overline{W_\le \cap Z_{=})}\cap \partial X)} c^\p_y d{\mathcal H}^{m-2}(x) -\int_{(\overline{W_= \cap Z_{=})}\cap \partial X)} c^\p \frac{ws_{yy}}{\sqrt{[1-(\hat n_W \cdot \hat n_X)^2][1-(\hat n_{ \p, W} \cdot \hat n_{ \p,Z})^2]}}d{\mathcal H}^{m-3}(x)\\
		&+&\int_{(\overline{W_\leq \cap Z_{\leq})}\cap \partial X)} \nabla_{\partial X} \cdot (c^\p
		\frac{\partial \hat n_{\partial, Z}}{\partial y})
		d{\mathcal H}^{m-1}(x) -\int_{(\overline{W_= \cap Z_{\le}}\cap \partial X)} c^\p\frac{\partial \hat n_{\partial, Z}}{\partial y}\cdot \hat n_{\partial, W} d{\mathcal H}^{m-2}(x)\\
		& -&\int_{(\overline{W_\leq \cap Z_{=})}\cap \partial X)}\frac{zs_{yyy}}{\sqrt{1-(\hat n_Z \cdot \hat n_X)^2}} \nabla_{\partial X} \cdot (c^\p \hat n_{\partial, Z})d{\mathcal H}^{m-1}(x)
		\\ &+&\int_{(\overline{W_= \cap Z_{=})}\cap \partial X)}\frac{zs_{yyy}}{\sqrt{[1-(\hat n_Z \cdot \hat n_X)^2][1-(\hat n_{ \p,W} \cdot \hat n_{ \p,Z})^2]}} c^\p\hat n_{\partial, W}\cdot \hat n_{\partial, Z} d{\mathcal H}^{m-3}(x).
	\end{eqnarray*}
	
	%$\re \frac{\p}{\p y} \hat n_{\p, Z} =  ?$,
		
		\end{appendices}

\bibliographystyle{plain}%{mf}
\bibliography{newbib}

\end{document}